\documentclass{article}
\usepackage[utf8]{inputenc}

\usepackage{a4wide}

\usepackage[dvipsnames]{xcolor}
\usepackage{latexsym,amssymb}
\usepackage{amsmath,amsthm,amsxtra,amsbsy,accents}
\usepackage{mathtools}
\usepackage[scr=boondoxo]{mathalpha}
\usepackage{bm}
\usepackage{hyperref}

\usepackage{enumerate}
\usepackage{enumitem}

\usepackage{upgreek}

\DeclareMathAlphabet{\mathup}{OT1}{\familydefault}{m}{n}
\newcommand{\dx}[1]{\mathop{}\!\mathup{d} #1}

\newcommand{\ve}{\varepsilon}

\DeclarePairedDelimiter{\abs}{\lvert}{\rvert}
\DeclarePairedDelimiter{\norm}{\lVert}{\rVert}
\DeclarePairedDelimiter{\bra}{(}{)}

\DeclarePairedDelimiter{\set}{\{}{\}}
\DeclarePairedDelimiter{\skp}{\langle}{\rangle}

\numberwithin{figure}{section}

\def\calA{{\mathcal A}}  
  
 \def\calH{{\mathcal H}} \def\calI{{\mathcal I}}
\def\calJ{{\mathcal J}}  
\def\calM{{\mathcal M}}  
\def\calP{{\mathcal P}}  
\def\calS{{\mathcal S}} \def\calT{{\mathcal T}}

 \def\sfe{{\mathsf e}} \def\sff{{\mathsf f}}

\def\sfm{{\mathsf m}}  
  
\def\sfs{{\mathsf s}}  
\def\sfv{{\mathsf v}} \def\sfw{{\mathsf w}}

 \def\sfE{{\mathsf E}} 
\def\sfG{{\mathsf G}}  
  \def\sfL{{\mathsf L}}

\def\sfV{{\mathsf V}}

\def\scrA{{\mathscr  A}}

  \def\scrL{{\mathscr  L}}

\newcommand{\R}{\mathbb{R}}
\newcommand{\N}{\mathbb{N}}
\newcommand{\CE}{{\mathsf{CE}}}

\newcommand{\prox}{\text{prox}}

\newcommand{\supp}{{\rm supp}}

\newcommand{\leb}{\scrL}
\newcommand{\setsep}{:\,}
\newcommand{\restrto}[1]{_{|#1}}
\newcommand{\entsep}{\,|\,}

\newcommand{\Meas}{\calM}
\newcommand{\MeasPos}{\calM_{\geq 0}}
\newcommand{\Prob}{\calP}

\newcommand{\one}{\mathbf{1}}
\newcommand{\omge}{\sfL}
\newcommand{\omgg}{\sfG}
\newcommand{\ceunreg}{\CE(\mu_0,\mu_1)}
\newcommand{\cebound}{\CE_{b}(\mu_0,\mu_1)}
\newcommand{\cereg}{\CE_{\rm reg}(\mu_0,\mu_1)}
\newcommand{\ceregmix}{\CE_{\text{reg}}^{\rm mix}(\rho_0,\rho_1)}
\newcommand{\weakcvg}{\rightharpoonup}

\newcommand{\spaceflux}{L^1\bra*{(0,1)\times\omge}}

\newcommand{\Lip}{\mathrm{Lip}}

\newcommand{\diam}{{\rm diam}}

\makeatletter
\newcommand{\bnabla}{{\mathpalette\b@nabla\relax}}
\newcommand\b@nabla[2]{%
        \setbox\z@=\hbox{$\m@th#1\bigtriangledown$}%
        \ht\z@.7\ht\z@
        \raise\dp\z@\box\z@
}
\makeatother

\newtheorem{theorem}{Theorem}[section]
\newtheorem{assumption}[theorem]{Assumption}
\newtheorem{lemma}[theorem]{Lemma}

\newtheorem{definition}[theorem]{Definition}

\newtheorem{proposition}[theorem]{Proposition}
\newtheorem{remark}[theorem]{Remark}
\newtheorem{example}[theorem]{Example}

\numberwithin{equation}{section}

\makeatletter
\newcommand{\oset}[3][0ex]{%
  \mathrel{\mathop{#3}\limits^{
    \vbox to#1{\kern-2\ex@
    \hbox{$\scriptstyle#2$}\vss}}}}
\makeatother

\usepackage[font=small]{caption}
\usepackage{subcaption}
\usepackage{graphicx}
\usepackage{authblk}
\usepackage{faktor}

\usepackage{xcolor}
\usepackage[normalem]{ulem}

\DeclareMathOperator*{\argmin}{\arg\!\min}

\usepackage{tikz}
\usetikzlibrary{arrows.meta}
\usetikzlibrary{calc,intersections,through}
\usetikzlibrary{positioning}
\usetikzlibrary{bending}

\tikzset{
    >=Stealth, 
    every picture/.style={
        line width=0.75pt, 
        shorten >=1pt,     
        shorten <=1pt
    }
}

\usepackage{centernot}

\usepackage[vlined, ruled]{algorithm2e}

\definecolor{bluegray}{rgb}{0.4, 0.6, 0.8}
\definecolor{mntf}{HTML}{13574d}
\definecolor{unia}{HTML}{3c2673}
\definecolor{light_green}{HTML}{009b5d}

\title{The Schrödinger problem on metric graphs}
\author[1]{Juliane Krautz}
\author[1,2]{Jan-F. Pietschmann}
\affil[1]{{
\small
Universit\"{a}t Augsburg, Institut f\"ur Mathematik, Universit\"{a}tsstra\ss e 12a, 86159 Augsburg, Germany. Emails: \{juliane.krautz, jan-f.pietschmann\}@uni-a.de
}}
\affil[2]{{
\small
Centre for Advanced Analytics and Predictive Sciences (CAAPS), University of Augsburg,
Universit\"{a}tsstr. 12a, 86159 Augsburg, Germany. 
}}
\date{\today}

\begin{document}

\maketitle

\begin{abstract}
We study the Schrödinger problem on metric graphs and its different formulations. Starting from a static version, we introduce an equivalent reformulation as entropic optimal transport and show $\Gamma$-convergence towards static optimal transport. 
We then rigorously derive a Benamou-Brenier type dynamic version of the Schrödinger problem, thereby extending known results from ${\rm RCD}^*(K,N)$-spaces. With this equivalence at hand, we conclude that the minimum values of the dynamic Schrödinger problem converge towards the squared Wasserstein distance, and minimizers converge to Wasserstein geodesics. We also extend the dynamic formulation to a more general class of initial and final data and show existence of solutions in this setting using the direct method. 
Lastly, we illustrate our analytical findings by a numerical investigation. 
\end{abstract}

\textbf{Mathematics Subject Classification}  35R02, 49J45, 60B05

\tableofcontents

\section{Introduction}

In 1931, E. Schrödinger introduced a mathematical problem concerned with finding the most likely evolution between two observations $\mu_0$ and $\mu_1$ of distributions of independent gas particles \cite{Schroeinger31}. This thought experiment amounts to the so-called Schrödinger problem. 
In modern terms, initial and final observations are modeled by probability measures $\mu_0,\mu_1\in\Prob(X)$ on a Polish space $(X,d)$. For $\pi^0,\pi^1:X\times X \to X$ denoting the canonical projections, we define the set of \emph{admissible transport plans}
\begin{align*}
    \Gamma(\mu_0,\mu_1) \coloneqq \set*{\gamma\in\Prob(X\times X) \setsep \gamma(A\times X) = \mu_0(A), \, \gamma(X\times B) = \mu_1(B)},
\end{align*}
which encode the displacement of the mass. Up to a sign, the likelihood of such a displacement is described in terms of the \emph{Boltzmann entropy}
\begin{align}\label{eq:static_entropy}
\calH(\gamma\entsep R) := \begin{cases}
    \iint_{X\times X} \log\frac{\dx{\gamma}}{\dx{R}} \dx{\gamma} &: \gamma\ll R \\
    +\infty &: \text{else}
\end{cases},
\end{align}
where we fix a reference measure $R\in \MeasPos(X\times X)$.
Then, the Schrödinger problem reads
\begin{align}\label{eq:static_metric_SBP}
    \inf\limits_{\gamma\in\Gamma(\mu_0,\mu_1)} \calH(\gamma\entsep R).
\end{align}
In the special case of the heat kernel as a reference measure, this interpolation problem is closely related to the following Kantorovich formulation of optimal transport
\begin{align}\label{eq:static_metric_OT}
    \inf\limits_{\gamma\in\Gamma(\mu_0,\mu_1)} \iint_{X\times X} \frac{1}{2}d^2(x,y) \dx{\gamma(x,y)}.
\end{align}
Additionally,~\eqref{eq:static_metric_OT} defines a metric on the space of probability measures with finite second moment $\Prob_2(X)$ which is the so-called \emph{Wasserstein distance}
\begin{align*}
    W_2^2(\mu_0,\mu_1) \coloneqq \inf\limits_{\gamma\in\Gamma(\mu_0,\mu_1)} \iint_{X\times X} d^2(x,y) \dx{\gamma(x,y)}.
\end{align*}
With this distance function, $(\Prob_2(X), W_2)$ is a metric space \cite{Ambrosio_GF}.
To see the relation between both problems, let us consider the case $X=\R^d$, and~\eqref{eq:static_metric_SBP} with the choice $R_\beta = k_{\beta/2}\leb\otimes\leb$, where 
\begin{align*}
    k_t(x,y) = (4\pi t)^{-d/2} e^{-\abs{x-y}^2/4t}
\end{align*}
is the heat kernel, $\beta>0$, and $\leb\in\MeasPos(\R^d)$ denotes the Lebesgue measure. Inserting these choices into the entropy functional defined in~\eqref{eq:static_entropy} yields 
\begin{align}\label{eq:EntrOT_metric}
    \beta \calH(\gamma\entsep R_\beta) = \iint_{\R^d\times \R^d} \frac{1}{2}\abs{x-y}^2 \dx{\gamma(x,y)} + \beta \calH(\gamma\entsep\leb\times\leb) + \frac{\beta}{2}\log(2\pi\beta).
\end{align}
In particular, the Schrödinger problem leads to a perturbed transport problem, where the additional penalty is given by the entropy. Such formulations are called entropic optimal transport. Formally,~\eqref{eq:EntrOT_metric} converges to~\eqref{eq:static_metric_OT} as $\beta\to0$ and this formal connection has been made rigorous following two different arguments. One due to \cite{leonardsurvey}, proving $\Gamma$-convergence for the static formulations directly on spaces where a Large Deviation principle holds true. This additional assumption guarantees that the logarithm of the heat kernel asymptotically behaves like the squared distance.
Another approach is as a direct consequence of~\eqref{eq:EntrOT_metric} and the $\Gamma$-convergence between entropic optimal transport and the Kantorovich formulation on $\R^d$ proven in \cite{carlier2017}.

In the past decades, optimal transport has received substantial interest, and in the seminal paper \cite{Benamou2000ACF} a fluid-dynamics formulation of~\eqref{eq:static_metric_OT} has been introduced. This dynamic problem reads as
\begin{align}\label{eq:dynamic_metric_OT}
    \inf\limits_{(\mu_t,v_t\mu_t)} \int_0^1 \int_X \frac{1}{2}\abs{v_t}^2\dx{\mu_t}\dx{t},
\end{align}
where the infimum runs over weak solutions to the continuity equation 
\begin{align*}
    \begin{cases}\partial_t \mu_t + \nabla\cdot (v_t \mu_t) =0 \\ \mu\restrto{t=0} = \mu_0, \quad \mu\restrto{t=1} = \mu_1 \end{cases}
\end{align*}
with prescribed initial and final data. Similarly, the Schrödinger problem admits a dynamic formulation as well, see \cite{leonardsurvey, Chen2016}. It is given by
\begin{align}\label{eq:dynamic_metric_SBP}
    \inf\limits_{(\mu_t,v_t\mu_t)} \int_0^1 \int_X \frac{1}{2}\abs{v_t}^2 \dx{\mu_t}\dx{t} + \frac{\beta^2}{8}\int_0^1 \int_X \abs{\nabla \log \mu_t}^2 \dx{\mu_t}\dx{t},
\end{align}
again minimized over weak solutions to the continuity equation. 
Comparing both~\eqref{eq:dynamic_metric_SBP} and~\eqref{eq:dynamic_metric_OT}, they differ only by an additional term appearing in the Schrödinger problem, called \emph{Fisher information}. In general, this term leads to higher regularity of the solutions as it requires at least weak differentiability to be finite. Therefore, the dynamic Schrödinger problem can also be understood as a regularized approximation of dynamic optimal transport.
On ${\rm RCD}^*(K,N)$-spaces, \cite{gigli_benamou-brenier_2018} showed that static and dynamic Schrödinger problem are equivalent in analogy to the celebrated Benamou-Brenier formulation of optimal transport, thus extending previously known results to a more general class of spaces. Very recently, \cite{garatti_pde_nodate} developed a new approach in $\R^d$ which is based on a PDE-perspective not relying on the ${\rm RCD}^*(K,N)$-structure. 

Due to this equivalence, $\Gamma$-convergence is expected to hold true for the dynamic problems as well. Indeed, in the case of ${\rm RCD}(K,\infty)$-spaces it has been shown in \cite{monsaingeon2023dyn}. As a consequence, the relations between the different problems and their reformulations can be summarized as in Figure~\ref{fig:connect_diff_problems}.  

\begin{figure}
    \centering
    \begin{tikzpicture}[
    formula/.style={
        draw,
        rounded corners,
        inner sep=5pt,
        align=center,
        font=\scriptsize
    }
    ]
    
    \node[formula] (n1) at (-4,2) {$\inf_\gamma \iint \frac{1}{2} d^2(x,y) \dx{\gamma(x,y)} + \beta\calH(\gamma\entsep\sfm\otimes\sfm) $};
    \node[formula] (n2) at (0,0) {$\inf_\gamma \beta\calH(\gamma\entsep R_\beta)$};
    \node[formula] (n3) at (4,2) {$\inf_{(\mu_t,v_t\mu_t)} \int_0^1\int \bra*{\frac{1}{2}\abs{v_t}^2 + \frac{\beta^2}{8} \abs{\nabla\log\mu_t}^2 }\dx{\mu_t } \dx{t}$};
    
    \node[formula] (n4) at (-4,-2) {$\inf_\gamma \iint \frac{1}{2}d^2(x,y) \dx{\gamma(x,y)}$};
    \node[formula] (n5) at (4,-2) {$\inf_{(\mu_t,v_t\mu_t)} \int_0^1\int \frac{1}{2}\abs{v_t}^2 \mu_t \dx{\mu_t} \dx{t}$};
    
    \draw[<->, double, light_green]
        (n1) -- node[above, sloped, font=\tiny] {$\rm SP \leftrightarrow EOT$} (n2);
    
    \draw[<->, double, light_green]
        (n2) -- node[above, sloped, font=\tiny] {$\rm SP \leftrightarrow D_{SP}$} (n3);
    
    \draw[<->, double]
        (n4) -- node[below, sloped, font=\tiny] {$\rm OT \leftrightarrow D_{OT}$} (n5);
    
    \draw[<-, thick, light_green]
        (n4) -- node[above, sloped, font=\tiny] {$\rm OT \xleftarrow{\Gamma} EOT$} (n1);
    
    \draw[->]
        (n3) -- node[above, sloped, font=\tiny] {$\rm D_{SP} \xrightarrow{\Gamma} D_{OT}$} (n5);

    \draw[<-]
    (n2) -- node[above, sloped, font=\tiny] {$\rm OT \xleftarrow{\Gamma} SP$} (n4);
    
    \end{tikzpicture}
    \caption{Relation between optimal transport and Schrödinger problems on an ${\rm RCD}^*(K,N)$-space with reference measure $\sfm$. On metric graphs, the entropic transport problem is given in terms of a slightly modified cost function and the dynamic Schrödinger problem minimizes over bounded curves instead. In this article, the highlighted relations are proven to hold on metric graphs.}
    \label{fig:connect_diff_problems}
\end{figure}

To the best of the authors' knowledge, the full picture of Figure~\ref{fig:connect_diff_problems} is only available on ${\rm RCD}^*(K,N)$-spaces so far, and even partial results are unknown outside this setting. A prototypical example of a metric space not contained in this class are metric graphs. On these spaces, static and dynamic optimal transport has been studied in \cite{MatthiasErbar2022} (see also \cite{Burger2023,FazenyBurgerPietschmann2026} for the case allowing mass storage on vertices), establishing the equivalence $\rm OT \leftrightarrow D_{OT}$ from Figure~\ref{fig:connect_diff_problems}. Moreover, a rich theory for the heat equation is available, among which is an explicit formula for the heat kernel derived in \cite{roth_spectre_1984}. Further, spectral properties \cite{kurasov_spectral_2024}, Gaussian estimates \cite{mugnolo_gaussian_2007, baudoin_differential_2018}, and semigroup properties \cite{fijavz_variational_2007, mugnolo_semigroup_2014} have been studied. Relying on this theory, we analyze the Schrödinger problem on metric graphs, focusing on its relation to optimal transport.  

\textbf{Aims and contributions.} The aim of this paper is to study both dynamic and static formulations of the Schrödinger problem on metric graphs and to investigate their connection to optimal transport. In particular, we show that the relations $\rm SP \leftrightarrow EOT$, $\rm EOT \overset{\Gamma}{\to} OT$, and $\rm SP \leftrightarrow D_{SP}$ highlighted in Figure~\ref{fig:connect_diff_problems} hold true on metric graphs with a modified cost function in the entropic optimal transport formulation. Interestingly, we do not obtain $\Gamma$-convergence in terms of the dynamic formulations. This is mainly due to the lack of stronger lower curvature bounds on metric graphs. However, the arising difficulties can be circumvented in the static point of view, allowing to conclude $\Gamma$-convergence there. A more detailed discussion of this phenomenon is given in Subsection~\ref{sec:discussion}.

\textbf{Structure of the paper.} 
First, we introduce the necessary notation as well as properties of the heat kernel on metric graphs in Section~\ref{Prelim}. This preliminary section is concluded with a discussion of synthetic curvature conditions on metric graphs and their connection to the Schrödinger problem, focusing on the differences between static and dynamic formulations. 
In Section~\ref{sec:static}, we study the static problems, prove existence of solutions for each of them, and give a reformulation of the Schrödinger problem as an entropic transport one. Additionally, this section contains our first main result, namely $\Gamma$-convergence between entropic and Kantorovich optimal transport.
In Section~\ref{sec:dynamic}, we prove our second main result which is the dynamic formulation of the Schrödinger problem. Moreover, we show uniqueness of its minimizers, and as a consequence, we give a convergence result for the minimum values and the minimizers in the dynamic setting. 
We also extend the previously found dynamic problem to more general initial and final data and study existence of solutions in this case. 
Lastly, a numerical method to solve the dynamic Schrödinger problem based on augmented lagrangian formulations and primal dual methods is proposed and tested for several examples in Section~\ref{numerics}.

\section{Preliminaries}\label{Prelim}

We first introduce our notation and collect preliminary results on the heat kernel on metric graphs and synthetic curvature conditions on these spaces.

\subsection{Notation}

Let $\sfG = (\sfV, \sfE)$ be an undirected combinatorial graph with a finite number of vertices $\sfV$ and edges $\sfE\subset\sfV\times\sfV$. Throughout, we assume the graph to be connected without self-loops, meaning that any pair of vertices can be connected by a path and that there are no edges $\sfe\in\sfE$ of the form $\sfe=(\sfv,\sfv)$ for some $\sfv\in\sfV$.
We endow the graph with a map $\ell:\sfE \to (0,\infty)$,~$\ell(\sfe) =: \ell_\sfe$ associating a positive length to the edges and define an orientation on each edge using the outer normal 
\begin{align*}
    n^\sfe_\sfv := \begin{cases}
    -1 &: \sfe = (\sfv,\sfw) \\
    +1 &: \sfe = (\sfw,\sfv) \\
    0 &: \sfv\notin \sfe
\end{cases},
\end{align*}
thus allowing us to identify edges with intervals $[0,\ell_\sfe]$. We call the triple $\sfG:=(\sfV, \sfE, \ell)$ a \emph{metric graph}.
For each $\sfv\in\sfV$ we further denote the set of \emph{incident edges} by $\sfE(\sfv) := \{\sfe\in\sfE \ | \sfv\in \sfe\}$.

In order to define functions on metric graphs, let
\begin{align*}
\omge := \bigsqcup\limits_{\sfe\in\sfE} [0,\ell_\sfe] \quad \text{and} \quad \omgg := \faktor{\omge}{\sim},
\end{align*}
where $\sim$ denotes the equivalence relation that identifies endpoints of the intervals with vertices. 
Note that vertices are not uniquely determined in the set $\omge$ as it contains a copy for each incident edge, while they are in $\omgg$ due to the identification.
We denote by $\leb$ the measure on $\omgg$ that is obtained from the Lebesgue measure on $\omge$ under the equivalence relation $\sim$.
Functions on metric graphs are defined as maps on either $\omge$ or $\omgg$ and given by their restriction to each edge. We write $f:\Omega \to \R$ with $f=\bra{f^\sfe}_{\sfe\in\sfE}$ and denote by $L^1(\Omega)$ the corresponding set of Lebesgue-measurable functions for $\Omega\in\set{\omge, \omgg}$ with norms
\begin{align*}
    \norm{f}_{L^1(\omgg)} := \int_{\omgg} \abs{f}(x) \dx{x}~,\quad \norm{f}_{L^1(\omge)} := \int_{\omge} \abs{f}(x) \dx{x} = \sum\limits_{\sfe\in\sfE} \int_{[0,\ell_\sfe]} \abs{f^\sfe(x)} \dx{x}.
\end{align*}
The resulting Sobolev space is given by $W^{1,1}(\omge) := \set*{f\in L^1(\omgg) \setsep f^\sfe \in W^{1,1}\bra*{(0,\ell_\sfe)}}$. We write $\supp(f) \coloneqq\set{x\in\omgg\setsep f(x) >0}$ for the support of the function $f:\omgg\to\R$.
On the set $\omgg$, the graph distance $d_\sfG:\omgg\times\omgg \to [0,\infty)$ defines a metric, making $(\omgg, d_\sfG)$ a geodesic metric space \cite{MatthiasErbar2022}. In this metric space, we denote the set of continuous functions by $C(\omgg)$ and define
\begin{align*}
    C^1(\omgg) := \set*{f \in C(\omgg) \setsep f^\sfe\in C^1\bra*{[0,\ell_\sfe]} \text{ for all } \sfe\in\sfE}.
\end{align*}
The set of \emph{Lipschitz functions} is given as $C^{0,1}(\omgg) := \set*{f\in C(\omgg) \setsep \Lip(f) <\infty}$, where 
\begin{align*}
    \Lip(f) &:= \sup\limits_{x\neq y} \frac{\abs{f(x) - f(y)}}{d_\sfG(x,y)}
\end{align*}
is the (global) \emph{Lipschitz constant} on the metric graph $\sfG$. Further, we introduce the notation 
\begin{align}\label{eq:Notation_C1Cinfty}
    C^1((0,1))\cap C^\infty(\omge) \coloneqq \set*{f_t:(0,1)\times\omge \to \R \setsep \genfrac{}{}{0pt}{}{t\mapsto f_t(x)\in C^1((0,1)) \text{ for all } x\in\omge,} {\, x\mapsto f_t(x) \in C^\infty(\omge) \text{ for all } t>0}}.
\end{align}

For $\Omega\in\set{\omge,\omgg}$ we define $\calM(\Omega)$ as the set of \emph{Borel measures} and $\calM_{\geq0}(\Omega)$ as the set of \emph{non-negative Borel measures}. The space of \emph{probability measures} is denoted by $\calP(\Omega)$ and the total variation norm of any measure $\nu\in\calM(\Omega)$ by $\norm{\nu}$. 
Let 
\begin{align}\label{eq:static_OT}\tag{OT}
    W_2^2(\mu_0, \mu_1) = \inf\limits_{\gamma \in \Gamma(\mu_0,\mu_1)} \iint_{\omgg\times\omgg} d_\sfG(x,y)^2 \dx{\gamma(x,y)}
\end{align}
denote the 2-Wasserstein distance on a metric graph. 
As noted in \cite{MatthiasErbar2022}, $\calP(\omgg)$ endowed with this distance is a geodesic space and $W_2(\mu_0,\mu_1)<\infty$ for all $\mu_0,\mu_1\in\Prob(\omgg)$.
It is well known that the $2$-Wasserstein distance metricizes the weak convergence of probability measures on compact metric spaces, see e.g. \cite[Proposition 7.1.5] {Ambrosio_GF}. 
For two measures $\mu\in\calM(\Omega)$ and $\sfm\in\calM_{\geq0}(\Omega)$, $\Omega\in\set{\omge, \omgg}$, we write $\mu\ll \sfm$ if $\mu$ is absolutely continuous with respect to $\sfm$ and denote the Radon-Nikodym densities of $\mu$ by $\frac{\dx{\mu}}{\dx{\sfm}}$. If $\mu$ is singular with respect to $\sfm$, we write $\mu\perp\sfm$.
For any measure $\nu\in\calM([0,1]\times\Omega)$, $\Omega\in\set*{\omgg, \omge}$, we write $\nu = (\nu_t)_{t\in[0,1]}$ for its disintegration if it is well-defined. On the other hand, any family of measures $(\nu_t)_{t\in[0,1]}$, $\nu_t\in\calM(\Omega)$ for $t\in[0,1]$, defines a measure $\nu \in \calM([0,1]\times\Omega)$ by its disintegration.

We want to highlight that functions in $C(\omgg)$ are continuous up to the boundary on all edges and that $\omgg$ and $\omge$ are compact. Therefore, the duality between Borel-measures holds with respect to this set of functions.

As in~\eqref{eq:static_entropy}, we denote the \emph{Boltzmann entropy} on a metric graph by $\calH(\gamma\entsep R)$ for $\gamma\in\Prob(\omgg\times\omgg)$ and a reference measure $R\in \MeasPos(\omgg\times\omgg)$. By slight abuse of notation, we also write 
\begin{align*}
    \calH(\mu\entsep \leb) := \begin{cases}
    \int_{\omgg} \log\frac{\dx{\mu}}{\dx{\leb}} \dx{\mu} &: \mu\ll \leb \\
    +\infty &: \text{else}
\end{cases}
\end{align*}
for the entropy of a probability measure $\mu\in\Prob(\omgg)$.
If $\mu\in \Prob(\omgg)$ is given such that $\calH(\mu\entsep\leb)<+\infty$, then $\mu\ll\leb$ as a direct consequence of the definition and by the theorem of Radon-Nikodym, there exists a density $\rho = \frac{\dx{\mu}}{\dx{\leb}}$.

As a last important notion, we recall the definition of $\Gamma$-convergence. 

\begin{definition}[$\Gamma$-convergence]\label{def:Gammcvg}
    Let $X$ be a topological space with a sequence of functionals $F_\beta:X\to \R\cup\{\infty\}$ for $\beta>0$. A functional $F:X\to\R\cup\{\infty\}$ is the \emph{$\Gamma$-limit} of this sequence if 
    \begin{enumerate}[label=\roman*)]
        \item for all $x_\beta\to x$ in $X$ as $\beta\to 0$ it holds that $F(x) \leq \liminf_{\beta\to 0} F_\beta(x_\beta)$ and
        \item for all $x\in X$ there exists a \emph{recovery sequence} $\bra*{x_\beta}_{\beta>0}\subset X$ such that $x_\beta\to x$ as $\beta\to 0$ and $F(x) = \lim_{\beta\to 0} F_\beta(x_\beta)$. \label{RecovSeq}
    \end{enumerate}
\end{definition}

\begin{remark}
    If the liminf-condition holds, it suffices to show the existence of a sequence $x_\beta\to x$ such that $F(x) \geq \limsup_{\beta\to 0} F_\beta(x_\beta)$ to conclude~\ref{RecovSeq}.
\end{remark}

\subsection{The heat kernel on metric graphs}

In what follows, we fix a specific kind of reference measure in the static Schrödinger problem that is related to the heat kernel. For this choice, we study equivalent formulations as well as the $\Gamma$-limit.
To this end, we rely on regularity properties of the heat semigroup $(H_t)_{t\geq 0}$ on a metric graph and its kernel. The semigroup itself is defined via solutions to the heat equation under so-called standard coupling conditions, i.e. 
\begin{align}\label{eq:heat}
    \begin{cases}
    \partial_tH_t\mu = \Delta H_t\mu &\text{in } (0,\infty)\times\omge\\
    \sum_{e\in\sfE(\sfv)} \partial_x H_t\mu^\sfe \cdot n^\sfe_\sfv = 0 &\text{for } t>0, \, \sfv\in\sfV\\
    H_t\mu^\sfe(\sfv) = H_t\mu^{\sff}(\sfv) &\text{for } t>0, \, \sfv\in\sfV\\
    H_0\mu = \mu &\text{in } \omgg
\end{cases}.
\end{align}
As was shown in \cite{roth_spectre_1984}, this semigroup gives rise to a heat kernel $h_t(x,y)$. 
In order to characterize the kernel in more detail, we need to introduce additional notation first. 
We denote by $\deg(\sfv) := \abs{\sfE(\sfv)}$ the \emph{degree} of $\sfv\in\sfV$ and identify each edge $\sfe\in\sfE$ with two copies $\sfe_+$ and $\sfe_-$, called arcs, that only differ in orientation. Here, $\sfe_+$ is oriented according to $n^\sfe_\sfv$. The initial vertex on $\sfe_\pm$ is denoted by $I(\sfe_\pm)$, whereas $T(\sfe_\pm)$ denotes the terminal vertex. We write $\vec \sfe$ for any of the two arcs $\sfe_\pm$ resulting from the edge $\sfe\in\sfE$. Given $\vec \sfe$ we denote the arc with opposite orientation by $-\vec \sfe$.
For any pair of arcs $\vec \sfe,\vec \sff$ we define the \emph{transfer coefficient}
\begin{align*}
    \ve^\sfG_{\vec \sfe,\vec \sff} := \begin{cases}
        \frac{2}{\deg(T(\vec \sfe))} &: T(\vec \sfe)=I(\vec \sff) \text{ and } T(\vec \sff) \neq I(\vec \sfe)\\
        \frac{2}{\deg(T(\vec \sfe))} - 1 &: T(\vec \sfe)=I(\vec \sff) \text{ and } T(\vec \sff) = I(\vec \sfe)\\
        0 &: T(\vec \sfe)\neq I(\vec \sff)
    \end{cases}.
\end{align*}
Combinations of the first kind are called \emph{traverses} and combinations of the second kind \emph{reflections} at the vertex $T(\vec \sfe)=I(\vec \sff)$.
For coupling conditions different than the standard ones, these coefficients change and we refer to \cite{kostrykin_heat_2007} for a detailed analysis.

In the above formula, vertices of degree $2$ are negligible as traverses through them have transfer coefficient one, whereas reflections lead to the coefficient being zero. Therefore, we are able to identify any metric graph with a graph that has no degree-$2$ vertices. At the same time, any point $x\in\omgg$ can be identified with a vertex of degree $2$, changing the set of vertices accordingly but not the heat-kernel.
Any finite sequence of arcs $P=(x,\vec \sfe_1,\ldots,\vec \sfe_{k+2},y)$, $k\in\N_0$, with $I(\vec \sfe_1) = x$,~ $T(\vec \sfe_{k}) = y$ and $T(\vec \sfe_i)=I(\vec \sfe_{i+1})$ for all $i\in\{1,\ldots,k-1\}$, is called a \emph{path}. The first and last arc of such a sequence connect $x,y\in\omgg$ to a vertex $\sfv\in\sfV$. The length of these arcs is bounded from above by the maximum edge length. The remaining arcs $\vec \sfe_k$ in the sequence are of length $\ell_{\sfe_k}>0$, connecting two vertices with each other.
For $k\in\N_0$, we denote by $P_{k+2} (x,y)$ the set of paths starting in $x$ and ending in $y$ containing $k$ full edges and therefore having length 
\begin{align*}
    l(P) := \begin{cases}
        d_\sfG(x,y) &: k=0\\
        \sum\limits_{j=2}^{k+1} \ell_{\sfe_j} + d_\sfG(x,T(\vec \sfe_1)) + d_\sfG(y, I(\vec \sfe_{k+2})) &: k\geq 1
    \end{cases}.
\end{align*}
Each path $P=(x,\vec \sfe_1,\ldots,\vec \sfe_{k+2},y)$ gives rise to the transfer coefficient 
\begin{align*}
    \ve^\sfG_P := \prod\limits_{i=1}^{k+1} \ve^\sfG_{\vec \sfe_i, \vec \sfe_{i+1}}.
\end{align*}

With this notation, we are able to characterize the heat kernel for standard boundary conditions using the path-sum formula introduced in \cite{roth_spectre_1984}.

\begin{definition}\label{kernelPath}
    The heat kernel on a metric graph $\sfG$ is given by
    \begin{align*}
        h_t(x,y) := k_t(d_\sfG(x,y)) \delta_{\sfe,\sff} + L_t(x,y),
    \end{align*}
    where $\delta_{\sfe,\sff}$ is the Kronecker symbol,
    \begin{align*}
        L_t(x,y) := \sum\limits_{k=0}^\infty ~\sum\limits_{P\in P^\sfG_{k+2}(x,y)} \ve^\sfG_P k_t(l(P))
    \end{align*}
    for $x\in\sfe$, $y\in\sff$ and 
    \begin{align*}
        k_t(z) := \frac{1}{\sqrt{4\pi t}} e^{-\frac{z^2}{4t}}
    \end{align*}
    is the heat kernel on the real line.
\end{definition}

The above formula holds true on $\omgg$ as was shown in \cite{kostrykin_heat_2007}, and is independent of the choice of edges $\sfe\in \sfE(v)$ for $\sfv\in\sfV$. 
In addition to the path-sum formula from Definition~\ref{kernelPath} the heat kernel can be written by means of its eigenvalue expansion \cite{kurasov_spectral_2024} which reads
\begin{align}\label{eq:eigenexp}
    h_t(x,y) = \sum\limits_{k=1}^\infty e^{-\lambda_k t} \psi_k(x) \psi_k(y)
\end{align}
for eigenvalues $0= \lambda_1 \leq \lambda_2 \leq \ldots$ and eigenfunctions $\psi_k\in L^2(\omgg)$ forming an orthonormal basis of $L^2(\omgg)$. In the case of standard boundary conditions, $\lambda_1=0$ is an eigenvalue of the Laplacian with multiplicity one and $\psi_1\equiv \rm c$ for a constant $c>0$. 
Moreover, the heat kernel admits gaussian estimates.
The following is the content of \cite[Lemma 5.3]{baudoin_differential_2018}.

\begin{lemma}\label{lem:GaussianBnds}
    For given $T>0$, there exists $T_0\leq T$ and constants $C_0,C_1>0$ such that  
    \begin{align*}
        C_1 k_t(d_\sfG(x,y)) \geq h_t(x,y) \geq C_0 k_t(d_\sfG(x,y)) > 0
    \end{align*}
    for $t\in(0,T_0)$ and a.e. $x,y\in\omgg$. The constants $C_0$, $C_1$ and $T_0$ depend only on the geometry of the underlying metric graph and the time-bound $T>0$. 
\end{lemma}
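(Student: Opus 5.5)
We prove the two-sided Gaussian bound of Lemma~\ref{lem:GaussianBnds} directly from the path-sum representation in Definition~\ref{kernelPath}. The idea is that for small $t$ the leading term $k_t(d_\sfG(x,y))$ dominates and the remaining path contributions are exponentially smaller, while for $t$ bounded away from zero the kernel is uniformly positive and bounded.

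We may insert degree-$2$ vertices at $x$ and $y$ without changing the kernel. Then $x$ and $y$ are vertices, and $h_t(x,y)$ is a sum over paths $P$ from $x$ to $y$ of $\ve^\sfG_P\, k_t(l(P))$. The shortest path has length $d_\sfG(x,y)$. With this convention its coefficient is the product of traverse coefficients $2/\deg$, up to the normalization at the endpoints given by the $\delta_{\sfe,\sff}$ term. This product is bounded below by some $c_\sfG>0$ depending only on the maximal degree and the number of edges, since a shortest path visits each edge at most once.

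For the \emph{upper bound}, the transfer coefficients satisfy $|\ve^\sfG_{\vec\sfe,\vec\sff}|\le 1$. Let $\ell_{\min}$ be the minimal edge length. Every path with $k$ full edges has $l(P)\ge \max(d_\sfG(x,y),(k-1)\ell_{\min})$, and the number of such paths is at most $D^{k+2}$, where $D$ is the maximal degree. This gives
\begin{align*}
|L_t(x,y)|\le \sum_k D^{k+2}\frac{1}{\sqrt{4\pi t}}e^{-\max(d^2,(k-1)^2\ell_{\min}^2)/4t}.
\end{align*}
Splitting off the factor $e^{-d^2/8t}$ and bounding the rest with $e^{-(k-1)^2\ell_{\min}^2/8t}$, the sum over $k$ converges uniformly for $t<T$. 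We obtain $h_t\le C\,t^{-1/2}e^{-d^2/8t}$, which is a Gaussian bound with a modified constant in the exponent. To get the exact exponent $d^2/4t$ we instead separate paths of length $l(P)\le d+\ell_{\min}$ from longer ones. There are finitely many short paths, and each contributes at most $k_t(d)$. For the long paths, $e^{-l^2/4t}\le e^{-d^2/4t}e^{-(l^2-d^2)/4t}$ with $l^2-d^2\ge (l-d)^2$, and the remaining factors are summable uniformly in $t<T$. This yields $h_t\le C_1k_t(d_\sfG(x,y))$.

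The \emph{lower bound} is the main obstacle, because reflection coefficients $2/\deg-1$ can be negative. We write $h_t = \ve_{P_0}k_t(d)+\sum_{P\ne P_0}\ve_Pk_t(l(P))$. Geodesics of equal length contribute with positive sign, since they consist of traverses only. Every non-geodesic path satisfies $l(P)\ge d+\delta(x,y)$. Here $\delta(x,y)$ can degenerate, for instance when two near-geodesic routes exist, but such competing routes consist only of traverses and carry positive coefficients. A path can have a negative coefficient only if it contains a reflection, and then its length exceeds $d$ by at least $2\min(\text{dist to a vertex})$; this gap can also degenerate near vertices. Our first attempt is therefore an argument in two regimes. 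For $x,y$ at distance at least $\eta$ from all vertices of degree $\neq2$, we have $l(P)\ge d+2\eta$ for every reflecting path. The negative mass is then at most $C e^{-\eta d/t}e^{-\eta^2/t}k_t(d)$, which is absorbed for $t<T_0(\eta)$. Near vertices we instead use the vertex as the base point, where the local picture is a star graph, and the kernel is computed explicitly by the method of images with positive combined coefficients $2/\deg$. Compactness of $\omgg$ then yields uniform constants $C_0$ and $T_0$ depending only on the geometry and on $T$. Alternatively, we would simply invoke \cite[Lemma 5.3]{baudoin_differential_2018} as the paper does.
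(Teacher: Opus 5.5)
\textbf{Verdict: the self-contained proof has a genuine gap at its hardest step (the lower bound near vertices of degree at least three). Your closing fallback of citing Baudoin--Kelleher is exactly what the paper does.}

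\textbf{What works.} Your upper bound is sound. Each length window contains a bounded number of paths, and $l(P)^2-d^2\ge (l(P)-d)^2$. Your lower bound is also sound when $x$ and $y$ both stay at distance $\ge\eta$, with $\eta\le\ell_{\min}/2$, from all vertices of degree $\ge3$. In that regime every negatively weighted path contains a backtrack of length $\ge 2\eta$, and the geodesic term absorbs it for $t<T_0(\eta)$.

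\textbf{The gap.} Let $x$ lie on $\sfe$ at distance $a<\eta$ from a vertex $\sfv$ with $\deg(\sfv)\ge3$. Let $y$ lie outside a small star around $\sfv$, either far away or near another vertex. Each path leaving $x$ away from $\sfv$ has a twin that first runs into $\sfv$ and is reflected there. The twin's coefficient is multiplied by $2/\deg(\sfv)-1<0$ and its length is only $2a$ larger. As $a\to0$ these terms are neither small nor absorbable.
\begin{itemize}
\item The explicit infinite-star kernel says nothing about $h_t(x,y)$ for such $y$, because this is a global quantity. So ``use the vertex as base point'' is not an argument for this case.
\item ``Compactness yields uniform constants'' is not a valid step. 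Your $T_0(\eta)$ degenerates as $\eta\to0$. Uniformity must instead come from an estimate on a \emph{fixed} $\eta$-neighbourhood of the vertices, uniform in $a\in[0,\eta]$.
\end{itemize}

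\textbf{What is missing.} You need the image pairing carried out inside the path sum itself.
\begin{itemize}
\item Pair each path whose first arc goes from $x$ towards the far endpoint of $\sfe$ with the path whose first transition is the reflection at $\sfv$. Each pair then contributes $\ve^\sfG_P\bigl[k_t(l)+(\tfrac{2}{\deg\sfv}-1)k_t(l+2a)\bigr]$. When $\ve^\sfG_P>0$ this lies between $\tfrac{2}{\deg\sfv}\ve^\sfG_P k_t(l)$ and $\ve^\sfG_P k_t(l)$.
\item Do the same pairing simultaneously at the vertex $\sfw$ near $y$ (distance $b$). This produces the factor $1+r_\sfv A+r_\sfw B+r_\sfv r_\sfw ABE$, where $r=2/\deg-1$, $A=e^{-(la+a^2)/t}$, $B=e^{-(lb+b^2)/t}$ and $E=e^{-2ab/t}$.
\item This factor is not obviously positive. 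It equals $(1+r_\sfv A)(1+r_\sfw B)-r_\sfv r_\sfw AB(1-E)$, which is $\ge \tfrac{4}{\deg\sfv\deg\sfw}-2b/(\mathrm{e}\,l)$. To conclude you must show $l\gtrsim\ell_{\min}$ for these core paths.
\item You also need the bookkeeping for $x,y$ on a common edge, where the path $x\to\sfv\to y$ is a first and a last reflection at once.
\end{itemize}
Once this is done, every remaining negatively weighted path contains a backtrack of length $\gtrsim\ell_{\min}$ and is absorbed as in your first regime.

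\textbf{Comparison with the paper.} The paper gives no proof of its own and only quotes \cite[Lemma 5.3]{baudoin_differential_2018}.
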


The heat kernel indeed characterizes solutions to the heat equation and preserves integrability properties of the initial data \cite[Theorem 3.6]{fijavz_variational_2007}. The definition extends to measure valued initial data by duality. 
\begin{lemma}\label{lem:RegHeat}
    For $f\in L^p(\sfG)$ with $p\in [1,\infty)$ it holds that
    \begin{align*}
        H_t f (x) \coloneqq \int_{\omgg} h_t(x,y) f(y) \dx{y} \overset{t\searrow 0}{\longrightarrow} f \quad \text{in } L^p(\omgg) 
    \end{align*}
    and $H_tf(x)>0$ for $f\geq 0$ and not the constant zero function.
    Moreover, if $p>1$ then $H_tf \in C^\infty(\omge)$ for all $t>0$, and it solves the heat equation~\eqref{eq:heat} pointwise for $t>0$ and $x\in\omge$. For $f\in L^\infty(\omgg)$ we have $\abs{H_tf(x)}\leq \norm{f}_{L^\infty(\omgg)}$ for all $t>0$, $x\in\omgg$ and if $\mu\in\Prob(\omgg)$, then $H_t\mu(x) = \int_\omgg h_t(x,y) \dx{\mu(y)}$ defines a distributional solution to~\eqref{eq:heat}.
\end{lemma}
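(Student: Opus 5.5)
The plan is to prove the four assertions of Lemma~\ref{lem:RegHeat} separately, using the eigenvalue expansion~\eqref{eq:eigenexp} for smoothness and the heat equation, and the Gaussian bounds of Lemma~\ref{lem:GaussianBnds} together with the path-sum formula of Definition~\ref{kernelPath} for positivity and approximation of the identity. The basic structural facts needed are
\begin{itemize}
\item symmetry $h_t(x,y)=h_t(y,x)$;
\item nonnegativity of $h_t$;
\item conservation of mass, $\int_\omgg h_t(x,y)\dx{y}=1$.
\end{itemize}
Symmetry is immediate from~\eqref{eq:eigenexp}. Conservation follows because $\psi_1\equiv c$ is orthogonal to all $\psi_k$ with $k\ge2$, so only the first term survives integration, and $c^2\leb(\omgg)=1$. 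Nonnegativity of $h_t$ for all $t>0$ (not just $t<T_0$) follows from the semigroup law $h_{t+s}(x,y)=\int h_t(x,z)h_s(z,y)\dx{z}$, again read off from~\eqref{eq:eigenexp}, combined with the lower Gaussian bound for small times.

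The $L^\infty$ bound then follows at once: $\abs{H_tf(x)}\le \norm{f}_{L^\infty}\int h_t(x,y)\dx{y}=\norm{f}_{L^\infty}$.

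Strict positivity comes from iterating the semigroup law. For $f\ge0$ with $f\not\equiv0$ and $t<T_0$, Lemma~\ref{lem:GaussianBnds} gives
\[
H_tf(x)\ge C_0\int k_t(d_\sfG(x,y))f(y)\dx{y}>0,
\]
since $k_t>0$ everywhere on the compact graph. For larger $t$, write $H_t=H_{t-s}H_s$ with $s<T_0$: then $H_sf>0$, and the same bound applied to $H_sf$ gives positivity.

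For $L^p$ convergence, boundedness comes first. By Jensen's inequality with the probability kernel $h_t(x,\cdot)$ and symmetry, $\norm{H_tf}_{L^p}\le\norm{f}_{L^p}$. By density it then suffices to treat $f\in C(\omgg)$. Using conservation of mass,
\[
\abs{H_tf(x)-f(x)}\le \int h_t(x,y)\abs{f(y)-f(x)}\dx{y}.
\]
Split the integral into $d_\sfG(x,y)<\delta$ and its complement. The first part is at most the modulus of continuity of $f$ at scale $\delta$. On the second part use the upper bound $h_t\le C_1k_t(d_\sfG)$ together with $\leb(\omgg)<\infty$: it is at most $2\norm{f}_\infty C_1 (4\pi t)^{-1/2}e^{-\delta^2/4t}\leb(\omgg)\to0$. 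This gives uniform, hence $L^p$, convergence.

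Smoothness and the heat equation come from the eigenfunction expansion. For $p>1$ we have $f\in L^1$, and for $t>0$
\[
H_tf=\sum_k e^{-\lambda_kt}\skp{f,\psi_k}\psi_k.
\]
Each $\psi_k$ is smooth on every edge, satisfies the standard coupling conditions, and obeys $-\Delta\psi_k=\lambda_k\psi_k$. Moreover $\abs{\skp{f,\psi_k}}\le\norm{f}_{L^1}\norm{\psi_k}_\infty$. \textbf{The main obstacle} is bounding $\norm{\partial_x^m\psi_k}_\infty$ polynomially in $\lambda_k$, so that all term-by-term derivatives converge uniformly. On an edge, $\psi_k$ is a combination of $\cos$ and $\sin$ of $\sqrt{\lambda_k}\,x$ with $L^2$-normalized coefficients, giving $\norm{\partial_x^m\psi_k}_\infty\lesssim\lambda_k^{m/2}$. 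Weyl asymptotics $\lambda_k\sim k^2$ on a finite metric graph then make $\sum_k e^{-\lambda_kt}\lambda_k^{N}$ summable for every $N$. Hence the series can be differentiated termwise in $t$ and $x$ for $t>0$, so $H_tf\in C^\infty(\omge)$, it solves $\partial_tH_tf=\Delta H_tf$ pointwise, and the continuity and Kirchhoff conditions pass to the limit by uniform convergence.

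For $\mu\in\Prob(\omgg)$ the same expansion applies with $\skp{\mu,\psi_k}=\int\psi_k\dx{\mu}$, which is bounded by $\norm{\psi_k}_\infty$. So $H_t\mu$ is in fact a classical solution for $t>0$. It attains $\mu$ weakly as $t\to0$ by duality: $\int\varphi\, H_t\mu\dx{x}=\int H_t\varphi\dx{\mu}$ by symmetry, and $H_t\varphi\to\varphi$ uniformly for $\varphi\in C(\omgg)$ by the argument above. Hence $H_t\mu$ is a distributional solution of~\eqref{eq:heat}.
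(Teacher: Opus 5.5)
Your proof is correct, but it follows a different route from the paper. The paper gives no argument of its own. It invokes \cite[Theorem 3.6]{fijavz_variational_2007}, i.e.\ the form-method semigroup theory for the standard Laplacian (positivity, Markov property, $L^p$-contractivity, strong continuity, regularity of the semigroup), and says the measure case follows ``by duality''.

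You instead derive every assertion from the two kernel representations the paper already quotes:
\begin{itemize}
\item The eigenfunction expansion~\eqref{eq:eigenexp} gives symmetry, the semigroup law and conservation of mass. Combined with the uniform bounds $\norm{\partial_x^m\psi_k}_\infty\lesssim\lambda_k^{m/2}$ and Weyl's law, it also gives termwise differentiability.
\item The Gaussian bounds of Lemma~\ref{lem:GaussianBnds}, bootstrapped through the semigroup law, give nonnegativity for all $t>0$, strict positivity, and the approximate-identity estimate.
\end{itemize}

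Your route yields a more quantitative statement: uniform convergence $H_t\varphi\to\varphi$ for continuous $\varphi$. It also gives smoothness and the pointwise heat equation already for $f\in L^1$ and for measure data, since $\abs{\skp{f,\psi_k}}\le\norm{f}_{L^1}\norm{\psi_k}_\infty$; so the hypothesis $p>1$ is superfluous. The cost is importing further spectral inputs that the citation hides: Weyl asymptotics, uniform sup-bounds on eigenfunctions and their derivatives, and the consistency of the path-sum and spectral representations of $h_t$.

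Two small points need tidying.

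First, Lemma~\ref{lem:GaussianBnds} only holds for $t<T_0$. So the step ``the same bound applied to $H_sf$'', used for the factor $H_{t-s}$, works only if $t-s<T_0$. For general $t$, either iterate with $t=ns$ and $s<T_0$, or argue as follows. $H_sf$ is continuous and strictly positive on the compact graph, hence $H_sf\ge m>0$. Then $H_tf=H_{t-s}H_sf\ge m$ by nonnegativity and mass conservation of $h_{t-s}$.

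Second, the Gaussian bounds are stated only for a.e.\ $(x,y)$. The pointwise claims $H_tf(x)>0$ and $\abs{H_tf(x)}\le\norm{f}_{L^\infty(\omgg)}$ for every $x$ therefore need the continuity of $h_t$ for $t>0$. Your uniformly convergent expansion supplies this, but you should say so explicitly.
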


\subsection{Synthetic curvature conditions on metric graphs}\label{sec:discussion}

So far the full picture of Figure~\ref{fig:connect_diff_problems} is only known under synthetic lower curvature bounds in the sense of ${\rm RCD}(K,\infty)$-conditions. Without such bounds, only partial results are available, with proofs specifically tailored to fit the structure of the metric space. This is necessary as the general theory relies on particular properties of the heat flow, such as an $\rm EVI$-formulation or Wasserstein contractivity, which are closely linked to curvature. 
Metric graphs, on the other hand, provide standard examples where these conditions fail, which motivated the study of even weaker notions and it turns out that such weak lower curvature bounds are available \cite{krautz2025weak}. However, they do not suffice to adapt the arguments from the ${\rm RCD}(K,\infty)$-setting and conclude the $\Gamma$-limit. For this reason, another approach must be taken. 

To this end, we discuss the challenges that arise on metric graphs compared to ${\rm RCD}(K,\infty)$-spaces, focusing on their connection to lower curvature bounds. Additionally, we illustrate the differences under weaker curvature conditions and compare the static and dynamic formulations with respect to this question. Interestingly, even though they turn out to be equivalent (Theorem \ref{thm:equiv_static_dynamic}), both formulations behave differently with respect to curvature bounds. 
Motivated by the abstract theory, we study the relation between optimal transport and the Schrödinger problem in terms of $\Gamma$-convergence, see Definition~\ref{def:Gammcvg}, with respect to weak convergence in the space of measures.

We first discuss the dynamic formulation~\eqref{eq:dynamic_metric_SBP}. 
While the asymptotic lower bound follows from standard lower semi-continuity, the limitation lies in the asymptotic upper bound, more precisely in the construction of suitable recovery sequences. 
In our setting, any such sequence is required to satisfy two main properties. It has to be absolutely continuous with an action asymptotically bounded from above by the action of the unregularized curve, and it has to be weakly differentiable in order for the Fisher information to be finite. On metric graphs, the second property already implies continuity \cite[Lemma 3.27]{mugnolo_semigroup_2014}, thus imposing a strong regularity condition.
The absolute continuity property, on the other hand, can be achieved by constructing solutions to the continuity equation, as was shown in \cite{MatthiasErbar2022}. Regarding the asymptotic bounds, we begin by reviewing the approach in ${\rm RCD}(K,\infty)$-spaces which relies on the theory of Wasserstein gradient flows. 
From this point of view, the Fisher information admits another interpretation, namely as the dissipation of the relative entropy along the heat flow \cite{Ambrosio_GF}. Therefore, regularizing by means of this flow is a natural candidate for constructing recovery sequences. 
In order to obtain admissible curves, initial and final conditions must be preserved, and to this end, an additional drift term is introduced. The resulting ansatz has been analyzed in \cite{monsaingeon2023dyn}, and we sketch the ideas presented there to illustrate the key challenges on metric graphs.
Let $h(t) =\min\{t,1-t\}$ be a hat function, and let $(\mu_t,v_t\mu_t)$ solve the continuity equation with finite action $\int_0^1\int_{\omge}|v_t|^2\dx{\mu_t}\dx{t} < +\infty$. Then, for $\beta>0$ we define the candidate for the recovery sequence as follows
\begin{align*}
    \mu_t^\beta = H_{\beta h(t)} \mu_t.
\end{align*}
Note that this new curve depends on time in two ways. On the one hand due to the initial curve $t\mapsto\mu_t$, while on the other hand by the heat flow regularization $t\mapsto H_{\beta h(t)} \mu$. We write $\partial_s H_s\mu$ for the derivative with respect to the time-variable. Formally, from the chain rule and the definition of the heat flow, we obtain that
\begin{align}\label{eq:formal_dt}
    \frac{\dx{}}{\dx{t}} \mu_t^\beta &= \beta h'(t) \cdot \partial_s H_{s}\mu_t\big|_{s=\beta h(t)} + \frac{\dx{}}{\dx{t}} H_{s}\mu_t \big|_{s=\beta h(t)} = \beta h'(t) \cdot \Delta H_{\beta h(t)}\mu_t + \frac{\dx{}}{\dx{t}} H_{s}\mu_t \big|_{s=\beta h(t)} . 
\end{align}
After rearranging the terms, the left-hand side of this equation corresponds to the metric speed $|\smash{\Dot{\mu}_t^\beta}|$ of the regularized curve. The right-hand side contains two contributions; the first term can be modified to equal the Fisher information, and the last term will define a relation between the new action and the old one in the following way. On ${\rm RCD}(K,\infty)$-spaces, it has been shown \cite{ambrosio_bakryemery_2015} that the synthetic curvature bound can be characterized equivalently by the inequality 
\begin{align}\label{eq:W2_contr}
    W_2(H_{t}\mu, H_{t}\nu) \leq e^{-Kt} W_2(\mu,\nu)
\end{align}
for $\mu$, $\nu$ probability measures with finite second moment, $t>0$, and $K$ the lower curvature bound. This estimate is also called Wasserstein contractivity. For the curves $\mu_t^\beta$, it implies 
\begin{align*}
    \frac{W_2(H_{\beta h(t)}\mu_s, H_{\beta h(t)}\mu_{s+h})}{h} \leq e^{-K\beta h(t)} \frac{W_2(\mu_s, \mu_{s+h})}{h} 
\end{align*}
for all $h>0$ and $s\in (0,1)$, so that in the limit we obtain 
\begin{align*}
    \abs*{\bra*{\frac{\dx{}}{\dx{t}}{H_s\mu_t}}\Big|_{s=\beta h(t)}} \leq e^{-K \beta h(t)} \abs{\Dot{\mu_t}},
\end{align*}
thus recovering the action of the original curve. Moreover, an asymptotic bound on the Fisher information can be inferred from an ${\rm EVI}$-formulation of the heat flow together with the chain rule. Combining both, the bound on the action and the bound on the Fisher information, yields the desired upper bound in the limit.

On metric graphs, a weaker notion of curvature bounded from below has been introduced in \cite{krautz2025weak}, where several equivalent characterizations are given. While the study of these conditions is interesting in its own right, we focus on one particular consequence here, namely the weak counterpart of~\eqref{eq:W2_contr}. For a constant $C>1$ depending only on the geometry of the specific graph, it holds that
\begin{align*}
    W_2(H_{t}\mu, H_{t}\nu) \leq C e^{-Kt} W_2(\mu,\nu)
\end{align*}
and in analogy to the ${\rm RCD}(K,\infty)$-setting, this implies the (again weaker) lower bound $K>0$ on the curvature. Qualitatively, both statements are of the same type; they imply a Wasserstein control of solutions to the heat equation by their initial values, as well as similar long-time behaviour. For small times, however, the results have different implications due to the additional constant. 
Looking back at the construction of a possible recovery sequence $\mu_t^\beta$, the weaker condition gives  
\begin{align*}
    \abs*{\bra*{\frac{\dx{}}{\dx{t}}{H_s\mu_t}}\Big|_{s=\beta h(t)}}  \leq C e^{-K \beta h(t)} \abs{\Dot{\mu_t}},
\end{align*}
from which we recover the action of the initial curve, but only up to the factor $C>1$. Due to this additional constant, the resulting inequality does not suffice to conclude the asymptotic upper bound. 
At the same time, for any possible choice of recovery sequence, we need to ensure finiteness of the Fisher information for a particular weak solution to the continuity equation. This naturally leads to the heat equation and its short-time behaviour in the limit $\beta\to0$. 

In general, other regularizations of absolutely continuous curves are possible. One such method has been introduced in \cite{MatthiasErbar2022}.
It is based on an extension of the metric graph combined with averaging, and plays a key role in the development of the equivalence between static and dynamic optimal transport.
Following their method, for a given absolutely continuous curve $t\mapsto \mu_t$ and any $n\in\N$ we find a curve $t\mapsto \mu_t^n $ satisfying
\begin{align*}
    \limsup\limits_{n\to\infty} \abs{\smash{\Dot{\mu}_t^n}} \leq \abs{\Dot{\mu_t}}.
\end{align*}
In particular, we recover the action of the original curve without an additional constant. Unfortunately, their method does not generate sufficient regularity for the Fisher information to be finite, and is therefore not applicable to recovery sequences. 
So far, no approximation combining both properties is available on metric graphs.

Let us now turn to the static formulation instead. Known approaches to $\Gamma$-convergence in this setting are based on a reformulation as entropic optimal transport, e.g. \cite{leonardsurvey, carlier2017}. They rely on an expansion of the heat kernel in the form $\beta \log h^{-1}_{\beta/2}(x,y) = \frac{1}{2}d^2(x,y) + o(\beta)$, where $d(\cdot,\cdot)$ is the canonical distance on the metric space. 
Such an estimate holds true on metric graphs as well. 
In this case, the key ingredients are the Gaussian upper and lower bounds from Lemma~\ref{lem:GaussianBnds}, which can be shown independently of curvature conditions by means of semigroup techniques \cite{mugnolo_gaussian_2007} or by combinatorial arguments \cite{baudoin_differential_2018}. Both approaches rely on the explicit path sum formula from Definition~\ref{kernelPath}. 
Remarkably, the reformulation into entropic optimal transport together with these bounds allows to isolate the terms depending on the transport plan from the ones that depend on the heat kernel.  
For this reason, a recovery sequence does not need to be regularizing in terms of the heat flow, and a piecewise-constant approximation suffices to conclude $\Gamma$-convergence. No additional regularity has to be generated, which is in contrast to the dynamic setting, see Section~\ref{sec:static} for the construction. 

Up to this point, the static and dynamic formulations may be disconnected problems on metric graphs. However, they turn out to be equivalent, see Theorem~\ref{thm:equiv_static_dynamic}, thus linking the static $\Gamma$-convergence to the convergence of the minima in the dynamic case. 
The proof of this equivalence is based on the construction of solutions to the dynamic problem. To this end, we rely on the heat flow, and as this equivalence is shown for fixed $\beta>0$ without taking a small-time limit, the qualitative information and regularity of the heat kernel suffice. 

In summary, our approach compensates for the limited information provided by the weaker curvature bound. This is because the regularity requirements are shifted from the recovery sequence to the prescribed reference measure where the qualitative information is sufficient.
It is important to note that, in general, static recovery sequences lack the regularity required to be transformed into recovery sequences for the dynamic problem. This is the reason why we do not obtain $\Gamma$-convergence there.

\section{Static formulations}\label{sec:static}

In this section, we analyze the static Schrödinger problem and prove that it admits an entropic transport formulation. Moreover, we show $\Gamma$-convergence between entropic optimal transport~\eqref{eq:EntrOT_metric} and the Kantorovich formulation~\eqref{eq:static_metric_OT} in our setting. 
First, we recall known results from Polish spaces and study their implications for metric graphs. Next, we prove the convergence, closely following the arguments presented in \cite{carlier2017} that are based on a discretization of admissible plans via the so-called block-approximation. 

\subsection{Existence of solutions}

Motivated by the relations in the abstract setting, see Figure~\ref{fig:connect_diff_problems}, let $\beta>0$ and define the measure 
\begin{align*}
    R_\beta \coloneqq h_{\beta/2}\leb\otimes\leb,
\end{align*}
which satisfies $R_\beta\ll\leb\otimes\leb$. Additionally, $\leb\otimes\leb\ll R_\beta$ with density $\frac{\dx{\leb\otimes\leb}}{\dx{R_\beta}}(x,y) = h_{\beta/2}^{-1}(x,y)$ as Lemma~\ref{lem:GaussianBnds} implies $h_{\beta/2}(x,y)>0$ for a.e. $x,y\in\omgg$. 
We consider the following static Schrödinger problem for this specific reference measure
\begin{align}\label{eq:static_SBP}\tag{S$_\beta$}
    \inf\limits_{\gamma\in\Gamma(\mu_0,\mu_1)} \beta\calH(\gamma\entsep R_\beta).
\end{align}
Given two marginals with finite entropy, the above is well-posed and admits a unique solution.

\begin{proposition}\label{prop:existence_static_SBP}
    Let $\mu_0,\mu_1\in\Prob(\omgg)$ with $\calH(\mu_0\entsep\leb), \calH(\mu_1\entsep\leb)<+\infty$ and $\mu_i = \rho_i \leb$, $i=0,1$ be given. Then,~\eqref{eq:static_SBP} admits the unique solution $\gamma^\beta_{\rm opt}\in\Prob(\omgg\times\omgg)$ such that $\gamma^\beta_{\rm opt} = f^\beta\otimes g^\beta R_\beta$ for two Borel measurable functions $f^\beta,g^\beta:\omgg\to[0,\infty)$ that are unique up to transformations of the form $(f^\beta,g^\beta)\mapsto(cf^\beta,g^\beta/c)$ for a constant $c>0$. These functions satisfy the \emph{Schrödinger system }
    \begin{align}\label{eq:SchröSyst}
        \begin{cases}
            f^\beta(x)\int_{\omgg} g^\beta(y) h_{\beta/2}(x,y) \dx{y} = \rho_0(x),\\ g^\beta(x)\int_{\omgg} f^\beta(y) h_{\beta/2}(x,y) \dx{y} = \rho_1(x).
        \end{cases}
    \end{align}
    If $\rho_i \in L^\infty(\omgg)$, $i=0,1$, then $f^\beta, g^\beta \in L^\infty(\omgg)$ as well. 
\end{proposition}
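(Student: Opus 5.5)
The plan is to reduce the statement to the classical theory of the static Schrödinger problem on Polish spaces, as developed for instance in \cite{leonardsurvey}, by checking that its hypotheses hold for $R_\beta = h_{\beta/2}\leb\otimes\leb$ on the compact space $\omgg\times\omgg$.

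\textbf{Existence.} First I will produce an admissible plan with finite entropy, namely $\gamma = \mu_0\otimes\mu_1$. Since $\leb\otimes\leb\ll R_\beta$ with density $h_{\beta/2}^{-1}$, the chain rule gives
\begin{align*}
\calH(\mu_0\otimes\mu_1\entsep R_\beta) = \calH(\mu_0\entsep\leb)+\calH(\mu_1\entsep\leb) - \iint \log h_{\beta/2}(x,y)\dx{\mu_0(x)}\dx{\mu_1(y)}.
\end{align*}
The last integral has to be controlled, and for this I need $\log h_{\beta/2}$ to be bounded. The upper bound follows from the eigenfunction expansion~\eqref{eq:eigenexp}, or from the path-sum formula, at the fixed positive time $\beta/2$. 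For the lower bound I will use strict positivity of the heat kernel, which also rests on continuity of $h_{\beta/2}$ on the compact set $\omgg\times\omgg$. Lemma~\ref{lem:GaussianBnds} gives $h_t\ge C_0k_t(d_\sfG)\ge C_0 k_t(\diam\,\omgg)>0$ for small $t$. For larger times I will use the semigroup property $h_{t+s}(x,y)=\int h_t(x,z)h_s(z,y)\dx z$, which propagates the lower bound to all $t$.

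Next I will show that $\Gamma(\mu_0,\mu_1)$ is weakly compact and convex. It is tight, indeed automatically so since $\omgg$ is compact, and it is closed. I will also use that $\gamma\mapsto\calH(\gamma\entsep R_\beta)$ is weakly lower semicontinuous and strictly convex; lower semicontinuity comes from the Donsker–Varadhan dual representation with a finite reference measure. The direct method then yields a minimizer, and strict convexity gives uniqueness.

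\textbf{Product structure.} With a finite-entropy plan available and $\log$-bounded density of $R_\beta$ with respect to $\leb\otimes\leb$, I will invoke the known characterization of the minimizer (Csisz\'ar / Borwein–Lewis–Nussbaum, or \cite[Thm.\ 2.8]{leonardsurvey}): $\gamma^\beta_{\rm opt}=f^\beta\otimes g^\beta R_\beta$ with measurable $f^\beta,g^\beta\ge0$. If I want a direct argument, I will perturb along $\gamma+\epsilon(\tilde\gamma-\gamma)$ and take first variations. Uniqueness up to $(cf,g/c)$ follows from uniqueness of $\gamma$: if $f\otimes g=\tilde f\otimes\tilde g$ a.e. on the support, then $f/\tilde f$ and $\tilde g/g$ are a common constant. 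The Schrödinger system~\eqref{eq:SchröSyst} is obtained by computing the marginals of $f^\beta\otimes g^\beta h_{\beta/2}\leb\otimes\leb$ and equating them to $\rho_0,\rho_1$.

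\textbf{$L^\infty$ bounds.} This is the main obstacle, since the $f^\beta,g^\beta$ given by the abstract theorem are only measurable. Set $0<m\le h_{\beta/2}\le M$. From the system,
\begin{align*}
f^\beta(x)\, m\,\norm{g^\beta}_{L^1}\le\rho_0(x)\quad\text{and}\quad g^\beta(x)\, m\,\norm{f^\beta}_{L^1}\le\rho_1(x).
\end{align*}
These give $f^\beta\le\norm{\rho_0}_\infty/(m\norm{g^\beta}_{L^1})$, provided $\norm{g^\beta}_{L^1}\in(0,\infty)$, and similarly for $g^\beta$.

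To secure the finiteness I must show $f^\beta,g^\beta\in L^1$. Integrating the first equation gives $\int f^\beta\, H_{\beta/2}g^\beta=1$, and $H_{\beta/2}g^\beta\ge m\norm{g^\beta}_{L^1}$. So, if $g^\beta$ is integrable with positive norm, then $\norm{f^\beta}_{L^1}\le 1/(m\norm{g^\beta}_{L^1})<\infty$. Positivity holds since $\rho_1\neq0$. To handle the integrability of $g^\beta$ itself, I will either normalize via the constant $c$ or argue that $H_{\beta/2}g^\beta<\infty$ on a set of positive measure, which holds because $\rho_0>0$ there; then the two-sided bound on $h$ forces $\norm{g^\beta}_{L^1}\le H_{\beta/2}g^\beta(x)/m<\infty$.
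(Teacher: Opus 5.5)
Your proposal is correct. For the factorization it relies on the same kind of abstract input as the paper; the difference lies in how much you check by hand.

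The paper's proof delegates everything to Proposition 4.1.5 of \cite{Tam17} (with $B\equiv 0$): existence, uniqueness, the form $f^\beta\otimes g^\beta R_\beta$, the Schrödinger system and the propagation of $L^\infty$ bounds. It simply asserts that $\calH(\mu_0\otimes\mu_1\entsep R_\beta)<+\infty$ and that the hypotheses follow from Lemma~\ref{lem:GaussianBnds}. You instead proceed as follows:
\begin{itemize}
\item you verify the finite-entropy competitor explicitly;
\item you get existence and uniqueness from the direct method;
\item you cite the abstract theorem only for the product structure;
\item you prove the $L^\infty$ bound directly from the system. On $\set{\rho_0>0}$ the first equation forces $H_{\beta/2}g^\beta<\infty$, hence $m\norm{g^\beta}_{L^1}\le H_{\beta/2}g^\beta<\infty$. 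Then $f^\beta\le \norm{\rho_0}_{L^\infty}/(m\norm{g^\beta}_{L^1})$, and symmetrically for $g^\beta$.
\end{itemize}
This is more transparent, and it shows that this step needs only a positive lower bound on $h_{\beta/2}$. Your semigroup argument $h_{t+s}(x,y)=\int_{\omgg} h_t(x,z)h_s(z,y)\dx{z}$ also fills a point the paper passes over. Lemma~\ref{lem:GaussianBnds} only provides bounds for $t<T_0$, whereas $\beta/2$ is arbitrary.

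Two minor remarks. First, rescaling by the constant $c$ cannot make $g^\beta$ integrable, so only your second argument for $\norm{g^\beta}_{L^1}<\infty$ works. Second, the first-variation alternative along $\gamma+\epsilon(\tilde\gamma-\gamma)$ would not by itself yield the product form. It only gives a variational inequality for $\log\frac{\dx{\gamma}}{\dx{R_\beta}}$ tested against differences of couplings. Identifying this function with some $\varphi\oplus\psi$, and handling the set where the density vanishes, is exactly the closedness result of Csisz\'ar, Borwein--Lewis--Nussbaum or R\"uschendorf--Thomsen. So the citation is genuinely needed at that point.
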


\begin{proof}
    As scaling with $\beta>0$ does not change the minimizers, we consider the problem 
    \begin{align*}
        \inf\limits_{\gamma\in\Gamma(\mu_0,\mu_1)} \calH(\gamma\entsep R_\beta)
    \end{align*}
    instead. By assumption $\calH(\mu_0\otimes\mu_1\entsep R_\beta)<+\infty$ so that the infimum is finite and we can restrict to transport plans with finite entropy. 
    Since $\leb\otimes\leb\ll R_\beta\ll\leb\otimes\leb$ and $\calH(\gamma\entsep R_\beta)<+\infty$, we have that $\gamma\ll R_\beta$ as well as $\gamma\ll\leb\otimes\leb$. The existence and uniqueness of solutions now follow from an application of \cite{Tam17}[Proposition 4.1.5] with the choice $B\equiv 0$ in the assumptions, where the representation by measurable functions satisfying the Schrödinger system is concluded as well. 
    The additional $L^\infty$-bounds follow from \cite{Tam17}[Proposition 4.1.5, iv)] and the necessary assumptions are satisfied because of Lemma~\ref{lem:GaussianBnds}.
\end{proof}

\begin{remark}\label{rem:supp_fg}
    By non-negativity of $f^\beta, g^\beta$ and $h_{\beta/2}$ we have that $\rho_0(x)=0$ whenever $f^\beta(x)=0$. On the other hand, if $\rho_0(x)=0$, then either $f^\beta(x)=0$ or $\int_{\omgg} \rho^\beta(y) h_{\beta/2}(x,y) \dx{y}=0$. However, $h_{\beta/2}(x,y) > 0$ for all $x,y\in\omgg$ and therefore $g^\beta\equiv 0$ in the latter case. This contradicts the second equation from~\eqref{eq:SchröSyst} since $\mu_1\in\Prob(\omgg)$. In conclusion $\supp(f^\beta) = \supp(\rho_0)$ and by analogous arguments $\supp(g^\beta) = \supp(\rho_1)$.
\end{remark}

Similar to the known equivalences, the static Schrödinger problem~\eqref{eq:static_SBP} admits a reformulation as entropic optimal transport on metric graphs. In this setting, however, the cost function differs from the quadratic distance due to the structure of the heat kernel. The resulting formulation reads as follows. 

\begin{proposition}
    For given $\mu_0,\mu_1\in\Prob(\omgg)$ and $\beta>0$, the problem~\eqref{eq:static_SBP} is equivalent to
    \begin{align}\label{eq:EntrOT}\tag{EOT}
    \inf\limits_{\gamma\in\Gamma(\mu_0,\mu_1)} \iint_{\omgg\times\omgg} d_\sfG^2(x,y) - \beta \log\bra*{\frac{h_{\beta/2}(x,y)}{k_{\beta/2}(d_\sfG(x,y))}} \dx{\gamma(x,y)} + \beta \calH(\gamma\entsep\leb\otimes\leb) + \frac{\beta\log(\beta \pi)}{2}
\end{align}
and this entropic transport problem admits a unique minimizer.
\end{proposition}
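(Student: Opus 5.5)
The plan is to show that the two objective functionals agree on every $\gamma\in\Gamma(\mu_0,\mu_1)$, up to the additive constant. Then the infima coincide and the minimizers are the same. Existence and uniqueness for \eqref{eq:EntrOT} then transfer directly from Proposition~\ref{prop:existence_static_SBP}. If the marginals do not have finite entropy, both problems are identically $+\infty$ and there is nothing to compare.

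\textbf{Step 1: rewrite $\calH(\gamma\entsep R_\beta)$ via the chain rule.} First I show that $\gamma\ll R_\beta$ if and only if $\gamma\ll\leb\otimes\leb$. This follows from the mutual absolute continuity $\leb\otimes\leb\ll R_\beta\ll\leb\otimes\leb$ noted before \eqref{eq:static_SBP}, which comes from Lemma~\ref{lem:GaussianBnds}. In that case the Radon--Nikodym chain rule gives
\begin{align*}
\frac{\dx{\gamma}}{\dx{R_\beta}}(x,y) = \frac{\dx{\gamma}}{\dx{(\leb\otimes\leb)}}(x,y)\, h_{\beta/2}^{-1}(x,y).
\end{align*}
Taking logarithms, integrating against $\gamma$ and multiplying by $\beta$, I obtain
\begin{align*}
\beta\calH(\gamma\entsep R_\beta) = \beta\calH(\gamma\entsep\leb\otimes\leb) - \beta\iint \log h_{\beta/2}(x,y)\dx{\gamma(x,y)}.
\end{align*}

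\textbf{Step 2: split off the Gaussian factor.} I write
\begin{align*}
\log h_{\beta/2} = \log\bra*{\frac{h_{\beta/2}}{k_{\beta/2}(d_\sfG)}} + \log k_{\beta/2}(d_\sfG).
\end{align*}
From the explicit formula for $k_t$ in Definition~\ref{kernelPath}, $-\beta\log k_{\beta/2}(d_\sfG(x,y))$ equals a multiple of $d_\sfG^2(x,y)$ plus a $\gamma$-independent constant depending only on $\beta$. Integrating this against the probability measure $\gamma$ produces the quadratic cost term and the additive constant in \eqref{eq:EntrOT}. I will track the normalization of $k_t$ carefully so that the coefficient of $d_\sfG^2$ and the constant $\frac{\beta\log(\beta\pi)}{2}$ match \eqref{eq:EntrOT}; in any case only the $\gamma$-independent part is affected, and the quadratic coefficient is fixed by $k_t$.

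\textbf{Step 3: integrability (the expected main obstacle).} Step 2 only makes sense if every integral is finite, or at least if no $\infty-\infty$ expression appears. So I must show that the correction term $\log\bra*{h_{\beta/2}/k_{\beta/2}(d_\sfG)}$ is bounded on $\omgg\times\omgg$.
\begin{itemize}
\item For small $\beta$ this follows directly from the two-sided bounds $C_0\le h_{\beta/2}/k_{\beta/2}(d_\sfG)\le C_1$ in Lemma~\ref{lem:GaussianBnds}.
\item For general fixed $\beta>0$ I would argue differently. By Lemma~\ref{lem:RegHeat} and the path-sum formula, $h_{\beta/2}$ is continuous and strictly positive on the compact set $\omgg\times\omgg$, so it is bounded above and below by positive constants. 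The same holds for $k_{\beta/2}(d_\sfG)$, since $d_\sfG\le\diam(\omgg)<\infty$.
\end{itemize}
As a result, the cost $d_\sfG^2-\beta\log(h_{\beta/2}/k_{\beta/2}(d_\sfG))$ is a bounded measurable function. Its $\gamma$-integral is therefore always finite, and both functionals equal $+\infty$ exactly when $\calH(\gamma\entsep\leb\otimes\leb)=+\infty$. The identity between the two objectives then holds for all $\gamma\in\Gamma(\mu_0,\mu_1)$.

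\textbf{Step 4: conclude.} Since the objectives agree on $\Gamma(\mu_0,\mu_1)$ up to an additive constant, the two problems have the same minimizers. Uniqueness of the minimizer of \eqref{eq:EntrOT} follows from Proposition~\ref{prop:existence_static_SBP}. Alternatively, it follows from strict convexity of $\gamma\mapsto\calH(\gamma\entsep\leb\otimes\leb)$ on the convex set $\Gamma(\mu_0,\mu_1)$ together with linearity of the bounded cost term.
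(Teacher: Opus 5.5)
Your argument is essentially the paper's: rewrite $\calH(\gamma\entsep R_\beta)$ using $\frac{\dx{\gamma}}{\dx{R_\beta}}=\frac{\dx{\gamma}}{\dx{(\leb\otimes\leb)}}\,h_{\beta/2}^{-1}$, split $\log h_{\beta/2}=\log k_{\beta/2}(d_\sfG)+\log\bigl(h_{\beta/2}/k_{\beta/2}(d_\sfG)\bigr)$, and take infima. The only structural difference is at the end. The paper gets existence for \eqref{eq:EntrOT} from the direct method and uniqueness from strict convexity of $\gamma\mapsto\calH(\gamma\entsep\leb\otimes\leb)$. You transfer both from Proposition~\ref{prop:existence_static_SBP}, which is equally valid because the two objectives coincide.

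There is one step you cannot carry out as announced. In Step 2 you promise to match the coefficient of $d_\sfG^2$ and the constant $\frac{\beta\log(\beta\pi)}{2}$ printed in \eqref{eq:EntrOT}. With $k_{\beta/2}(z)=(2\pi\beta)^{-1/2}e^{-z^2/(2\beta)}$ you get
\[
-\beta\log k_{\beta/2}\bigl(d_\sfG(x,y)\bigr)=\tfrac{1}{2}\,d_\sfG^2(x,y)+\tfrac{\beta}{2}\log(2\pi\beta),
\]
so the identity you actually prove is
\[
\beta\calH(\gamma\entsep R_\beta)=\iint_{\omgg\times\omgg}\Bigl(\tfrac{1}{2}d_\sfG^2-\beta\log\tfrac{h_{\beta/2}}{k_{\beta/2}(d_\sfG)}\Bigr)\dx{\gamma}+\beta\calH(\gamma\entsep\leb\otimes\leb)+\tfrac{\beta}{2}\log(2\pi\beta).
\]
This is exactly what the paper's own proof derives, and it is the form used in Lemma~\ref{lem:LimInfEst} and Proposition~\ref{prop:LimSupEst}. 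The printed display is therefore a misprint. Your hedge that ``only the $\gamma$-independent part is affected'' is wrong for the quadratic term. With coefficient $1$ in front of $\iint d_\sfG^2\dx{\gamma}$, the two objectives would differ by $\frac12\iint d_\sfG^2\dx{\gamma}$, which depends on $\gamma$, and the minimizers would in general differ. You should state and prove the corrected version explicitly rather than promise to match the printed one.

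Two further remarks. First, your Step 3 genuinely improves on the paper. The paper cites Lemma~\ref{lem:GaussianBnds} for positivity of $h_{\beta/2}$ at arbitrary $\beta$, although that lemma only covers $t<T_0$. It also never checks that $\iint\log h_{\beta/2}\dx{\gamma}$ is finite, which is what excludes an $\infty-\infty$ when splitting the entropy. Your two-sided bound on $\log h_{\beta/2}$ over the compact $\omgg\times\omgg$ closes both points. Your justification of strict positivity via Lemma~\ref{lem:RegHeat} is a bit loose; a cleaner route iterates $h_{t+s}(x,y)=\int_{\omgg}h_t(x,z)h_s(z,y)\dx{z}$ down to times below $T_0$ and then uses the Gaussian lower bound.

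Second, the uniqueness claim needs the infimum to be finite, for instance $\calH(\mu_i\entsep\leb)<+\infty$, so that $\mu_0\otimes\mu_1$ is a finite competitor. In the case you set aside, where both problems are identically $+\infty$, every plan in $\Gamma(\mu_0,\mu_1)$ is a minimizer, so uniqueness fails unless $\Gamma(\mu_0,\mu_1)$ is a singleton. The paper's strict-convexity argument carries the same implicit restriction, so you should add this hypothesis to the uniqueness part.
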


\begin{proof}
    If $\gamma\in\Gamma(\mu_0,\mu_1)$ is such that $\gamma\centernot\ll R_\beta$, then $\gamma\centernot\ll \leb\otimes\leb$ since $\leb\otimes\leb\ll R_\beta$ and we have that $\calH(\gamma\entsep R_\beta) = +\infty = \calH(\gamma\entsep\leb\otimes\leb)$.
    Therefore, we only need to consider $\gamma\ll R_\beta$. In this case $\gamma\ll\leb\otimes\leb$ as well and $h_{\beta/2}>0$ by Lemma~\ref{lem:GaussianBnds}. We can rewrite the entropy as
    \begin{align*}
        &\calH(\gamma\entsep R_\beta) = \iint_{\omgg\times\omgg} \log\bra*{\frac{\dx{\gamma}}{\dx{R_\beta}}} \dx{\gamma} = \iint_{\omgg\times\omgg} \log\bra*{\frac{\dx{\gamma}}{\dx{\leb\otimes\leb}}(x,y) h_{\beta/2}^{-1}(x,y)} \dx{\gamma(x,y)}\\
        &= \calH(\gamma\entsep\leb\otimes\leb) - \iint_{\omgg\times\omgg} \log\bra*{h_{\beta/2}(x,y)} \dx{\gamma(x,y)}\\
        &= \calH(\gamma\entsep\leb\otimes\leb) - \iint_{\omgg\times\omgg} \log\bra*{k_{\beta/2}(d_\sfG(x,y))} \dx{\gamma(x,y)} - \iint_{\omgg\times\omgg} \log\bra*{\frac{h_{\beta/2}(x,y)}{k_{\beta/2}(d_\sfG(x,y))} } \dx{\gamma(x,y)}.
    \end{align*}
    Moreover, it holds that
    \begin{align*}
        \log\bra*{k_{\beta/2}(d_\sfG(x,y))} &= -\frac{1}{2}\log(2\pi\beta) - \frac{d^2_\sfG(x,y)}{2\beta}.
    \end{align*}
    Multiplying with $\beta>0$ then gives
    \begin{align*}
        \beta\calH(\gamma\entsep R_\beta) &= \beta\calH(\gamma\entsep\leb\otimes\leb) + \iint_{\omgg\times\omgg} \frac{1}{2}d^2_\sfG(x,y) - \beta\log\bra*{\frac{h_{\beta/2}(x,y)}{k_{\beta/2}(d_\sfG(x,y))} } \dx{\gamma(x,y)} + \frac{\beta\log(2\beta \pi)}{2}.
    \end{align*}
    The equivalence now follows after applying the infimum to both sides. 
    
    For the well-posedness of~\eqref{eq:EntrOT}, note that the functional is weakly lower semi-continuous in $\gamma$, thus, the direct method of Calculus of Variations is applicable as in \cite{santambrogio_optimal_2015}[Theorem 1.7]. By strict convexity of $\gamma\mapsto\calH(\gamma\entsep\leb\otimes\leb)$ and linearity of all remaining terms, the minimizer is unique. 
\end{proof}

\subsection{\texorpdfstring{$\Gamma$}{Gamma}-convergence of the static problems}

The entropic optimal transport problem~\eqref{eq:EntrOT} and the static Kantorovich formulation~\eqref{eq:static_OT} are connected by means of $\Gamma$-convergence of functionals, see Definition~\ref{def:Gammcvg}.
In order to show this relation, two asymptotic bounds have to be verified and we consider the asymptotic lower bound first.
Note that the relative entropy is lower semicontinuous with respect to weak convergence of measures and non-negative by Jensen's inequality \cite[Section 5]{MatthiasErbar2022}. From this observation we infer the following. 

\begin{lemma}[Liminf-estimate]\label{lem:LimInfEst}
    For $\gamma, \gamma_\beta\in\Prob(\omgg\times\omgg)$, $\beta>0$, such that $\gamma_\beta\weakcvg\gamma$ as $\beta\to 0$ it holds that
    \begin{align*}
        \iint_{\omgg\times\omgg} \frac{1}{2}d^2_\sfG(x,y) \dx{\gamma(x,y)}  \leq \liminf\limits_{\beta\to 0}\Bigg[&\iint_{\omgg\times\omgg} \frac{1}{2}d^2_\sfG(x,y) \dx{\gamma_\beta(x,y)} + \beta \calH(\gamma_\beta \entsep \leb\otimes\leb) \\&- \beta \iint_{\omgg\times\omgg} \log\bra*{\frac{h_{\beta/2}(x,y)}{k_{\beta/2}(d_\sfG(x,y))}} \dx{\gamma_\beta(x,y)} + \frac{\beta\log(2\beta \pi)}{2}\Bigg].
    \end{align*}
\end{lemma}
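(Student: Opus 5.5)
We split the bracket on the right-hand side into four terms and treat each separately, using that $\liminf$ of a sum dominates the sum of the $\liminf$s whenever no term tends to $-\infty$.

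\textbf{Transport term.} Since $\omgg$ is compact and $d_\sfG$ is continuous, $d_\sfG^2$ is bounded and continuous on $\omgg\times\omgg$. Weak convergence $\gamma_\beta\weakcvg\gamma$ therefore gives
\begin{align*}
    \iint \tfrac12 d_\sfG^2\dx{\gamma_\beta}\to\iint\tfrac12 d_\sfG^2\dx{\gamma}.
\end{align*}

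\textbf{Constant term.} It is elementary that $\frac{\beta\log(2\beta\pi)}{2}\to 0$ as $\beta\to 0$.

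\textbf{Entropy term.} We may assume the right-hand side is finite; otherwise there is nothing to prove. Then $\gamma_\beta\ll\leb\otimes\leb$ along the relevant subsequence. We need $\liminf\beta\calH(\gamma_\beta\entsep\leb\otimes\leb)\ge 0$. The paper notes nonnegativity by Jensen, but that holds only for a probability reference measure. Here $\leb\otimes\leb$ has total mass $|\omgg|^2$, so Jensen yields
\begin{align*}
    \calH(\gamma_\beta\entsep\leb\otimes\leb)\ge -2\log|\omgg|.
\end{align*}
Multiplying by $\beta$, this lower bound tends to zero, so $\liminf_{\beta\to0}\beta\calH(\gamma_\beta\entsep\leb\otimes\leb)\ge0$.

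\textbf{Heat-kernel correction term.} This is the main point. By Lemma~\ref{lem:GaussianBnds}, applied with some fixed $T$, there is a constant $C_1$ such that $h_{\beta/2}(x,y)\le C_1 k_{\beta/2}(d_\sfG(x,y))$ for a.e.\ $x,y$ whenever $\beta/2<T_0$. Hence
\begin{align*}
    -\beta\log\bigl(h_{\beta/2}/k_{\beta/2}(d_\sfG)\bigr)\ge-\beta\log C_1 \quad\text{a.e.}
\end{align*}
Since we may take $\gamma_\beta\ll\leb\otimes\leb$, the a.e.\ bound holds $\gamma_\beta$-a.e. Integrating against the probability measure $\gamma_\beta$, the whole term is bounded below by $-\beta\log C_1\to0$.

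Summing the four contributions gives the claimed inequality. The only delicate points are:
\begin{itemize}
    \item to restrict to $\gamma_\beta\ll\leb\otimes\leb$, so that the a.e.\ Gaussian bound can be integrated against $\gamma_\beta$;
    \item to check that the lower bounds for the entropy and correction terms are uniform, so the $\liminf$ splitting is legitimate.
\end{itemize}
Note that only the upper Gaussian bound is needed here; the lower bound will matter for the recovery sequence.
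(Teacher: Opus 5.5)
Your proposal is correct and follows the same term-by-term route as the paper: convergence of the transport cost, vanishing of $\frac{\beta\log(2\beta\pi)}{2}$, a uniform lower bound on the entropy, and the Gaussian bounds of Lemma~\ref{lem:GaussianBnds} for the heat-kernel correction. Your bound $\calH(\gamma_\beta\entsep\leb\otimes\leb)\geq -2\log\leb(\omgg)$ correctly repairs the paper's claim of non-negativity, which only holds when $\leb(\omgg)\leq 1$. You are also right that only the upper Gaussian bound is needed, whereas the paper uses both bounds to show that the correction term actually vanishes.
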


\begin{proof}
    Note that $\lim_{r\searrow 0} r \log(r) = 0$ and that, by lower semi-continuity of the cost-functional and non-negativity of the entropy, we have 
    \begin{align}
        \iint_{\omgg\times\omgg} \frac{1}{2}d^2_\sfG(x,y) \dx{\gamma(x,y)} &\leq \liminf\limits_{\beta\to 0} \iint_{\omgg\times\omgg} \frac{1}{2}d^2_\sfG(x,y) \dx{\gamma_\beta(x,y)}\notag\\
        &\leq \liminf\limits_{\beta\to 0}\bra*{\iint_{\omgg\times\omgg} \frac{1}{2}d^2_\sfG(x,y) \dx{\gamma_\beta(x,y)} + \beta \calH(\gamma_\beta \entsep \leb\otimes\leb) + \frac{\beta\log(2\beta \pi)}{2}}.\label{eq:LiminfStep1}
    \end{align}
    For the last term without loss of generality assume $\beta\leq 2$. Lemma~\ref{lem:GaussianBnds} with $T=1$ implies that 
    \begin{align*}
        \log\bra*{\frac{h_{\beta/2}(x,y)}{k_{\beta/2}(d_\sfG(x,y))}} \in [\log(C_0),\log(C_1)] \quad \text{for a.e. } x,y\in\sfG
    \end{align*}
    independent of $\beta>0$ small enough. Since $\gamma_\beta\in \Prob(\omgg\times\omgg)$ for all $\beta>0$, this gives
    \begin{align}\label{eq:LimIs0}
        0 = \lim\limits_{\beta\to 0} \beta \log(C_1) \geq \lim\limits_{\beta\to 0 } \beta \iint_{\omgg\times\omgg} \log\bra*{\frac{h_{\beta/2}(x,y)}{k_{\beta/2}(d_\sfG(x,y))}} \dx{\gamma_\beta(x,y)} &\geq \lim\limits_{\beta\to 0} \beta \log(C_0) = 0
    \end{align}
    and therefore the limit vanishes. Adding~\eqref{eq:LimIs0} to the right-hand side of~\eqref{eq:LiminfStep1} results in
    \begin{align*}
        \iint_{\omgg\times\omgg} \frac{1}{2}d^2_\sfG(x,y) \dx{\gamma(x,y)} \leq \liminf\limits_{\beta\to 0}\Bigg(\iint_{\omgg\times\omgg} \frac{1}{2}d^2_\sfG(x,y) \dx{\gamma_\beta(x,y)} + \beta \calH(\gamma_\beta \entsep \leb\otimes\leb) + \frac{\beta\log(2\beta \pi)}{2}\Bigg)&\\
        - \lim\limits_{\beta\to 0 } \beta \iint_{\omgg\times\omgg} \log\bra*{\frac{h_{\beta/2}(x,y)}{k_{\beta/2}(d_\sfG(x,y))}} \dx{\gamma_\beta(x,y)}&\\
        \leq \liminf\limits_{\beta\to 0}\Bigg(\iint_{\omgg\times\omgg} \frac{1}{2}d^2_\sfG(x,y) \dx{\gamma_\beta(x,y)} + \beta \calH(\gamma_\beta \entsep \leb\otimes\leb) + \frac{\beta\log(2\beta \pi)}{2}&\\ - \beta \iint_{\omgg\times\omgg} \log\bra*{\frac{h_{\beta/2}(x,y)}{k_{\beta/2}(d_\sfG(x,y))}} \dx{\gamma_\beta(x,y)}\Bigg)&.
    \end{align*}
\end{proof}

It remains to show the asymptotic upper bound. To this end we rely on the so-called block approximation introduced in \cite{carlier2017} and adapt the arguments to our modified cost functions. 

\begin{definition}[Block Approximation]
    Let $\mu_0,\mu_1\in\Prob(\omgg)$ with $\calH(\mu_0\entsep\leb),\calH(\mu_1\entsep\leb) < +\infty$ and $\gamma\in\Gamma(\mu_0,\mu_1)$ be given. For $h>0$ and $K\in\N$ let $\calT_h = \set{T^1_h,\ldots,T^K_h}$ be a \emph{partition} of $\omgg$, that is for all $k\in\set{1,\ldots,K}$ and $l\neq k$ it holds that
    \begin{align*}
        T^k_h \subset \omgg, \quad 0<\diam(T_h^k)<h, \quad T^k_h\cap T^l_h = \emptyset \quad \text{and} \quad \bigcup\limits_{k=1}^K T^k_h = \omgg.
    \end{align*}
    For a given partition $\calT_h$ we set $K(\calT_h) \coloneqq \abs{\calT_h}$.
    The \emph{block approximation} of $\gamma$ with respect to $\calT_h$ is 
    \begin{align*}
        \gamma_h \coloneqq \sum\limits_{k,l=1}^{K(\calT_h)} \gamma(T^k_h\times T^l_h)(\mu_0^k\otimes \mu_1^l),
    \end{align*}
    where for every Borel measurable set $A\subset\omgg$ we define 
    \begin{align*}
        \mu_0^k(A) \coloneqq \begin{cases}
            \frac{\mu_0(A\cap T^k_h)}{\mu_0(T^k_h)} &: \mu_0(T^k_h)>0\\ 0 &:\text{else}
        \end{cases} \quad\text{and}\quad \mu_1^l(A) \coloneqq \begin{cases}
            \frac{\mu_1(A\cap T^l_h)}{\mu_1(T^l_h)} &: \mu_1(T^l_h)>0\\ 0 &:\text{else}
        \end{cases}.
    \end{align*}
\end{definition}

The following useful properties can be verified as in \cite[Corollary 2.12]{carlier2017}. 

\begin{proposition}\label{prop:PropBlockAp}
    Let $\mu_0,\mu_1\in\Prob(\omgg)$ be given such that $\calH(\mu_0\entsep\leb),\calH(\mu_1\entsep\leb) < +\infty$ with $\mu_i = \rho_i\leb$ for $i=0,1$ and consider $\gamma\in\Gamma(\mu_0,\mu_1)$. For a partition $\calT_h$ of $\omgg$ let $\gamma_h$ be the corresponding block approximation. Then, $\gamma_h\in \Gamma(\mu_0,\mu_1)$ and $\gamma_h\ll\leb\otimes\leb$ with density 
    \begin{align*}
        \gamma_h(x,y) = \begin{cases}
            \gamma(T^k_h\times T^l_h) \frac{\rho_0(x)}{\mu_0(T^k_h)}\frac{\rho_1(y)}{\mu_1(T^l_h)} &: \mu_0(T^k_h), \mu_1(T^l_h)> 0\\ 0 &:\text{else}
        \end{cases}
    \end{align*}
    for $x\in T^k_h$ and $y\in T^l_h$. Moreover, given a sequence of partitions $(\calT_h)_{h>0}$ it holds that $\gamma_h\weakcvg\gamma$ as $h\to 0$. 
\end{proposition}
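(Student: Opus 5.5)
I would verify the three claims directly from the definition of the block approximation, mostly by testing against rectangles and then against continuous functions.

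\textbf{Marginals.} Fix a Borel set $A\subset\omgg$ and compute
\begin{align*}
\gamma_h(A\times\omgg)=\sum_{k,l}\gamma(T^k_h\times T^l_h)\,\mu_0^k(A)\,\mu_1^l(\omgg).
\end{align*}
If $\mu_1(T^l_h)>0$, then $\mu_1^l(\omgg)=1$. If $\mu_1(T^l_h)=0$, then $\gamma(T^k_h\times T^l_h)\le\mu_1(T^l_h)=0$, so that term vanishes anyway. Summing over $l$, and using that the $T^l_h$ partition $\omgg$, gives
\begin{align*}
\sum_k\gamma(T^k_h\times\omgg)\,\mu_0^k(A)=\sum_k\mu_0(T^k_h)\,\mu_0^k(A).
\end{align*}
Again the blocks with $\mu_0(T^k_h)=0$ drop out, so this equals $\sum_k\mu_0(A\cap T^k_h)=\mu_0(A)$. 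The second marginal is symmetric, so $\gamma_h\in\Gamma(\mu_0,\mu_1)$.

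\textbf{Density.} Since $\mu_i=\rho_i\leb$, each nonzero $\mu_0^k\otimes\mu_1^l$ has density
\begin{align*}
\frac{\rho_0\one_{T^k_h}}{\mu_0(T^k_h)}\otimes\frac{\rho_1\one_{T^l_h}}{\mu_1(T^l_h)}
\end{align*}
with respect to $\leb\otimes\leb$. The blocks $T^k_h\times T^l_h$ are disjoint, so summing the finitely many terms gives the stated piecewise formula, with $0$ on the degenerate blocks.

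\textbf{Weak convergence.} Take $\varphi\in C(\omgg\times\omgg)$. Because $\omgg\times\omgg$ is compact, $\varphi$ is uniformly continuous; let $\omega$ be a modulus of continuity with respect to $d_\sfG\oplus d_\sfG$. On each block, $\gamma$ and $\gamma(T^k_h\times T^l_h)\,\mu_0^k\otimes\mu_1^l$ carry the same total mass, and the latter is a probability measure times that mass, supported in the block. Hence, if $(x_k,y_l)$ is any point of $T^k_h\times T^l_h$,
\begin{align*}
\Big|\iint_{T^k_h\times T^l_h}\varphi\,\dx{\gamma}-\iint_{T^k_h\times T^l_h}\varphi\,\dx{\gamma_h}\Big|\le 2\,\gamma(T^k_h\times T^l_h)\,\omega(2h),
\end{align*}
since both integrals are within $\gamma(T^k_h\times T^l_h)\,\omega(2h)$ of $\varphi(x_k,y_l)\,\gamma(T^k_h\times T^l_h)$, using $\diam T^k_h<h$. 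Summing over all blocks yields
\begin{align*}
\Big|\int\varphi\,\dx{\gamma}-\int\varphi\,\dx{\gamma_h}\Big|\le 2\,\omega(2h)\to0.
\end{align*}
By duality with $C(\omgg\times\omgg)$ on the compact space, this is exactly $\gamma_h\weakcvg\gamma$.

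The only delicate point is the bookkeeping for blocks of zero $\mu_0$- or $\mu_1$-mass. It is handled by noting that $\gamma(T^k_h\times T^l_h)=0$ whenever one of the marginal masses vanishes, and the density formula already sets those blocks to $0$. Measurability of the partition sets is implicitly assumed, since $\gamma(T^k_h\times T^l_h)$ must be defined.
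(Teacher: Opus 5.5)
Your proof is correct, and it is essentially the direct verification that the paper delegates to \cite[Corollary 2.12]{carlier2017}: you check the marginals on product sets, handling the degenerate blocks via $\gamma(T^k_h\times T^l_h)\le\min\{\mu_0(T^k_h),\mu_1(T^l_h)\}$, you read the density off blockwise, and you get weak convergence from the fact that $\gamma$ and $\gamma_h$ assign the same mass to every block $T^k_h\times T^l_h$, each of diameter less than $2h$ in the sum metric. Your modulus-of-continuity bound $2\,\omega(2h)$ is simply the test-function form of the within-block transport estimate used there.
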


We are now in a position to prove the $\limsup$-estimate. For the entropy, we use the fact that metric graphs are compact and $\leb(\omgg)<\infty$, thus resulting in a slightly simplified argument compared to \cite[Proposition 2.14]{carlier2017}. 

\begin{proposition}[Limsup-estimate]\label{prop:LimSupEst}
    Let $\mu_0,\mu_1\in\Prob(\omgg)$ with $\calH(\mu_0\entsep\leb), \calH(\mu_1\entsep\leb)<+\infty$. Then, for any $\gamma\in\Gamma(\mu_0,\mu_1)$ there exists a sequence $(\gamma_\beta)_{\beta>0}\subset\Gamma(\mu_0,\mu_1)$ with $\gamma^\beta\weakcvg\gamma$ as $\beta\to0$ and
    \begin{align*}
        \iint_{\omgg\times\omgg} \frac{1}{2}d^2_\sfG(x,y) \dx{\gamma(x,y)}  \geq \limsup\limits_{\beta\to0} \Bigg(\iint_{\omgg\times\omgg} \frac{1}{2}d_\sfG^2(x,y&) \dx{\gamma^\beta(x,y)} + \beta \calH(\gamma^\beta\entsep\leb\otimes\leb) + \frac{\beta\log(2\beta \pi)}{2} \\&- \beta \iint_{\omgg\times\omgg} \log\bra*{\frac{h_{\beta/2}(x,y)}{k_{\beta/2}(d_\sfG(x,y))}} \dx{\gamma_\beta(x,y)}\Bigg).
    \end{align*}
\end{proposition}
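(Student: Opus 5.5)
The recovery sequence will be a block approximation with mesh size tied to $\beta$. For each $\beta>0$ I fix a partition $\calT_{h(\beta)}$ of $\omgg$ into small pieces of edges (for instance, subintervals of length comparable to $h$, with vertices assigned to one adjacent piece). I choose $h(\beta)\to 0$ slowly enough that $\beta\log h(\beta)\to 0$; the choice $h(\beta)=\beta$ already works. Then I set $\gamma^\beta:=\gamma_{h(\beta)}$. By Proposition~\ref{prop:PropBlockAp}, $\gamma^\beta\in\Gamma(\mu_0,\mu_1)$ and $\gamma^\beta\weakcvg\gamma$. I will estimate the four terms in the bracket separately.

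\textbf{The three easy terms.}
\begin{itemize}
\item \emph{Transport cost.} Since $d_\sfG^2$ is continuous and bounded on the compact space $\omgg\times\omgg$, weak convergence gives $\iint \frac12 d_\sfG^2\dx{\gamma^\beta}\to\iint\frac12 d_\sfG^2\dx{\gamma}$.
\item \emph{Heat-kernel correction.} Exactly as in~\eqref{eq:LimIs0}, Lemma~\ref{lem:GaussianBnds} bounds $\log(h_{\beta/2}/k_{\beta/2}(d_\sfG))$ between $\log C_0$ and $\log C_1$ for small $\beta$. Hence $\beta\iint\log(\cdots)\dx{\gamma^\beta}\to 0$.
\item \emph{Constant.} $\beta\log(2\beta\pi)\to0$.
\end{itemize}
It therefore suffices to show $\limsup_{\beta\to0}\beta\calH(\gamma^\beta\entsep\leb\otimes\leb)\le 0$.

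\textbf{The entropy term (main step).} I use the explicit density from Proposition~\ref{prop:PropBlockAp}. On the blocks $T^k\times T^l$ where the density is nonzero,
\[
\log\gamma^\beta(x,y)=\log\rho_0(x)+\log\rho_1(y)+\log\frac{\gamma(T^k\times T^l)}{\mu_0(T^k)\mu_1(T^l)}.
\]
Integrating against $\gamma^\beta$, whose marginals are $\mu_0$ and $\mu_1$, the first two terms give $\calH(\mu_0\entsep\leb)+\calH(\mu_1\entsep\leb)$, which is finite. Since $\gamma(T^k\times T^l)\le\min\{\mu_0(T^k),\mu_1(T^l)\}$, the last term is at most $-\log\mu_1(T^l)$. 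It is weighted by $\gamma(T^k\times T^l)$, and summing over $k$ gives
\[
-\sum_l \mu_1(T^l)\log\mu_1(T^l).
\]
This is the discrete entropy of $\mu_1$ on the partition. For a partition into $K(\calT_h)$ cells it is at most $\log K(\calT_h)$.

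\textbf{Controlling the number of cells.} Because $\omgg$ is a finite union of intervals of total length $\leb(\omgg)<\infty$, I can choose the partition with $K(\calT_h)\le C/h$, where $C$ depends only on the graph. This gives
\[
\beta\calH(\gamma^\beta\entsep\leb\otimes\leb)\le\beta\bra*{\calH(\mu_0\entsep\leb)+\calH(\mu_1\entsep\leb)+\log C-\log h(\beta)}\to 0.
\]
Together with non-negativity of the entropy this yields the claim.

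\textbf{Expected obstacle.} The delicate points are bookkeeping ones. Blocks with $\mu_0(T^k)=0$ or $\mu_1(T^l)=0$ carry no mass of $\gamma$, so they contribute nothing and the convention $0\log0=0$ applies. The partition must be genuinely disjoint and respect vertex identifications. Neither should cause real difficulty.
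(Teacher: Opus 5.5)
Your proposal is correct and follows essentially the same route as the paper: a block approximation of $\gamma$ on a partition whose mesh shrinks with $\beta$, weak convergence for the cost term, the Gaussian bounds of Lemma~\ref{lem:GaussianBnds} for the kernel correction, and an entropy bound in terms of the number of cells. The one difference is in the entropy bound: you use $\gamma(T^k\times T^l)\le\mu_0(T^k)$ and the Shannon bound to control the discrete part by $\log K(\calT_h)$, while the paper drops $\log\gamma(T^k\times T^l)\le 0$ and uses $r\log r\ge -e^{-1}$, which gives a bound of order $1/h$ and is why it needs mesh $\sqrt{\beta}$ instead of your $\beta$.
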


\begin{proof}
    Let $\ell_{\min} \coloneqq \min\set{\ell_\sfe \setsep\sfe\in\sfE}$ be the length of the shortest edge and without loss of generality, assume that $\beta\in(0,\ell_{\min}/2)$. For each $\sfv\in\sfV$ let $\overline{B_\beta(\sfv)} \subset\omgg$ be the closed ball of radius $\beta>0$ around $\sfv$, and let $(I^{\sfe,\beta}_k)_{k=1,\ldots,\lceil(\ell_\sfe-2\beta)/\beta\rceil}$ be a partition of $[\beta,\ell_\sfe-\beta]$ for $\sfe\in\sfE$, where $I^{\sfe,\beta}_k \in \set{(a^{\sfe,\beta}_k,b^{\sfe,\beta}_k), (a^{\sfe,\beta}_k,b^{\sfe,\beta}_k], [a^{\sfe,\beta}_k,b^{\sfe,\beta}_k), [a^{\sfe,\beta}_k,b^{\sfe,\beta}_k]}$ for $a^{\sfe,\beta}_k<b^{\sfe,\beta}_k$. Then 
    $$
    \calT_\beta \coloneqq\set*{\overline{B_\beta(\sfv)} \setsep \sfv\in\sfV}\cup\set*{I^{\sfe,\beta}_k \setsep \sfe\in\sfE, \, k\in\set{1,\ldots, \lceil(\ell_\sfe-2\beta)/\beta\rceil}}
    $$
    defines an admissible partition of scale $\beta$. Denote by $\gamma_\beta\in\Prob(\omgg\times\omgg)$ the corresponding block approximation.
    By Proposition~\ref{prop:PropBlockAp} we have $\gamma_\beta\weakcvg\gamma$ as $\beta\to0$, and since the distance function is continuous on $\sfG$ this implies 
    \begin{align}\label{eq:bnd_action}
        \iint_{\omgg\times\omgg} d^2_\sfG(x,y)\dx{\gamma_\beta(x,y)} \overset{\beta\to0}{\longrightarrow} \iint_{\omgg\times\omgg} d^2_\sfG(x,y)\dx{\gamma(x,y)}.
    \end{align}
    By assumption $\calH(\mu_i\entsep\leb)<+\infty$, therefore $\mu_i = \rho_i\leb$ for $i=0,1$.
    Further, note that $\gamma(T^k_\beta\times T^l_\beta) \leq 1$, and therefore $\log\bra*{\gamma(T^k_\beta\times T^l_\beta)}\leq 0$, which gives
    \begin{align*}
        &\calH(\gamma_\beta\entsep\leb\otimes\leb) = \sum\limits_{k,l=1}^{N(\calT_\beta)}\iint_{T^k_\beta\times T^l_\beta} \gamma(T^k_\beta\times T^l_\beta) \frac{\rho_0(x)}{\mu_0(T^k_\beta)}\frac{\rho_1(y)}{\mu_1(T^l_\beta)}  \log\bra*{\gamma(T^k_\beta\times T^l_\beta) \frac{\rho_0(x)}{\mu_0(T^k_\beta)}\frac{\rho_1(y)}{\mu_1(T^l_\beta)} } \dx{x}\dx{y}\\
        &=\sum\limits_{k,l=1}^{N(\calT_\beta)}\int_{T^k_\beta} \int_{T^l_\beta} \gamma(T^k_\beta\times T^l_\beta) \frac{\rho_0(x)}{\mu_0(T^k_\beta)}\frac{\rho_1(y)}{\mu_1(T^l_\beta)}\bra*{ \log\bra*{\gamma(T^k_\beta\times T^l_\beta)} + \log\bra*{\frac{\rho_0(x)}{\mu_0(T^k_\beta)}} +\log\bra*{\frac{\rho_1(y)}{\mu_1(T^l_\beta)} } }\dx{x}\dx{y}\\
        &\leq \sum\limits_{k=1}^{N(\calT_\beta)} \int_{T^k_\beta} \rho_0(x)\log\bra*{\frac{\rho_0(x)}{\mu_0(T^k_\beta)}}\dx{x} + \sum\limits_{l=1}^{N(\calT_\beta)}   \int_{T^l_\beta} \rho_1(y)\log\bra*{\frac{\rho_1(x)}{\mu_1(T^l_\beta)}}\dx{y}\\
        &= \calH(\mu_0\entsep\leb) - \sum\limits_{k=1}^{N(\calT_\beta)} \mu_0(T^k_\beta)\log\bra*{\mu_0(T^k_\beta)} + \calH(\mu_1\entsep\leb) - \sum\limits_{l=1}^{N(\calT_\beta)} \mu_1(T^l_\beta)\log\bra*{\mu_1(T^l_\beta)}.
    \end{align*}
    Since $r\mapsto r\log r$ is bounded from below by $-e^{-1}$, and since $N(\calT^h) \leq \abs{\sfV} + \leb(\omgg)/\beta$ by construction we obtain the upper bound
    \begin{align}\label{eq:bnd_entr}
        \calH(\gamma_\beta\entsep\leb\otimes\leb) &\leq \calH(\mu_0\entsep\leb) + \calH(\mu_1\entsep\leb) + 2e^{-1}\bra*{\abs{\sfV} + \frac{\leb(\omgg)}{\beta}}.
    \end{align}
    We define $\gamma^\beta\coloneqq\gamma_{\sqrt{\beta}}$, and combining~\eqref{eq:bnd_action} with~\eqref{eq:bnd_entr} we get 
    \begin{align*}
        &\limsup\limits_{\beta\to0} \bra*{\iint_{\omgg\times\omgg} \frac{1}{2}d_\sfG^2(x,y) \dx{\gamma_\beta(x,y)} + \beta \calH(\gamma_\beta\entsep\leb\otimes\leb) + \frac{\beta\log(2\beta \pi)}{2}}\\
        &\leq \iint_{\omgg\times\omgg} \frac{1}{2}d^2_\sfG(x,y) \dx{\gamma(x,y)} + \limsup\limits_{\beta\to0}\beta\bra*{\calH(\mu_0\entsep\leb) + \calH(\mu_1\entsep\leb) + 2e^{-1}\bra*{\abs{\sfV} + \frac{\leb(\omgg)}{\sqrt{\beta}}} }\\
        &= \iint_{\omgg\times\omgg} \frac{1}{2}d^2_\sfG(x,y) \dx{\gamma(x,y)}.
    \end{align*}
    Since $\gamma^\beta\weakcvg\gamma$ as $\beta\to 0$ with $\gamma^\beta\in\Gamma(\mu_0,\mu_1)$ for all $\beta\in (0,\ell_{\min}/2)$,~\eqref{eq:LimIs0} holds true for this particular sequence.
    From $\lim_{r\to 0} r\log(r) = 0$ we conclude that
    \begin{align*}
        &\limsup\limits_{\beta\to0} \bra*{\iint_{\omgg\times\omgg} \frac{1}{2}d_\sfG^2(x,y) \dx{\gamma_\beta(x,y)} + \beta \calH(\gamma_\beta\entsep\leb\otimes\leb) + \frac{\beta\log(2\beta \pi)}{2} - \beta \iint_{\omgg\times\omgg} \log\bra*{\frac{h_{\beta/2}(x,y)}{k_{\beta/2}(d_\sfG(x,y))}} \dx{\gamma_\beta(x,y)}} \\
        &\leq \limsup\limits_{\beta\to0} \bra*{\iint_{\omgg\times\omgg} \frac{1}{2}d_\sfG^2(x,y) \dx{\gamma_\beta(x,y)} + \beta \calH(\gamma_\beta\entsep\leb\otimes\leb) + \frac{\beta\log(2\beta \pi)}{2}} \\&- \liminf\limits_{\beta\to0} \beta \iint_{\omgg\times\omgg} \log\bra*{\frac{h_{\beta/2}(x,y)}{k_{\beta/2}(d_\sfG(x,y))}} \dx{\gamma_\beta(x,y)}
        = \iint_{\omgg\times\omgg} \frac{1}{2}d^2_\sfG(x,y) \dx{\gamma(x,y)}.
    \end{align*}
\end{proof}

Having established both estimates, the $\Gamma$-convergence follows. 

\begin{theorem}\label{thm:gamma_cvg_static}
    For $\mu_0,\mu_1\in\Prob(\omgg)$ with $\calH(\mu_0\entsep\leb),\calH(\mu_1\entsep\leb)<+\infty$, it holds that~\eqref{eq:EntrOT} $\Gamma$-converges to~\eqref{eq:static_OT} as $\beta\to 0$. Additionally, minimizers of~\eqref{eq:EntrOT} converge weakly to minimizers of~\eqref{eq:static_OT} as well as the minimum values.
\end{theorem}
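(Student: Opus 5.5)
The plan is to assemble the two asymptotic bounds already proven and then apply the fundamental theorem of $\Gamma$-convergence, using the compactness of $\omgg\times\omgg$. We work on $X=\Gamma(\mu_0,\mu_1)$ with the topology of weak convergence. Let $F_\beta$ denote the functional in~\eqref{eq:EntrOT}, written with the cost $\tfrac12 d_\sfG^2$ as in the computation preceding it. Let $F(\gamma)=\iint\frac12 d_\sfG^2\dx{\gamma}$ denote the Kantorovich functional. Sequences outside $\Gamma(\mu_0,\mu_1)$ need not be considered, since the marginal constraints are closed under weak convergence: any weak limit of plans in $\Gamma(\mu_0,\mu_1)$ lies again in $\Gamma(\mu_0,\mu_1)$. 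The liminf inequality is then exactly Lemma~\ref{lem:LimInfEst}. The existence of a recovery sequence follows from Proposition~\ref{prop:LimSupEst} combined with the Remark after Definition~\ref{def:Gammcvg}. This gives $F_\beta\xrightarrow{\Gamma}F$.

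For the convergence of minimizers, let $\gamma^\beta_{\rm opt}$ be the unique minimizer of $F_\beta$ provided by the preceding proposition. Since $\omgg\times\omgg$ is compact, $\Prob(\omgg\times\omgg)$ is weakly compact by Prokhorov's theorem. Hence every sequence $\beta_n\to0$ has a subsequence with $\gamma^{\beta_n}_{\rm opt}\weakcvg\bar\gamma\in\Gamma(\mu_0,\mu_1)$. Equi-coercivity is therefore automatic and no uniform bound on the entropy is needed.

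Next take an arbitrary $\gamma\in\Gamma(\mu_0,\mu_1)$ with recovery sequence $\gamma^\beta$ from Proposition~\ref{prop:LimSupEst}. Minimality and the liminf estimate give
\begin{align*}
F(\bar\gamma)\le\liminf_n F_{\beta_n}(\gamma^{\beta_n}_{\rm opt})\le\limsup_n F_{\beta_n}(\gamma^{\beta_n}_{\rm opt})\le\limsup_n F_{\beta_n}(\gamma^{\beta_n})\le F(\gamma).
\end{align*}
Thus $\bar\gamma$ minimizes~\eqref{eq:static_OT}. Choosing $\gamma=\bar\gamma$ turns all the inequalities into equalities, so $\min F_{\beta_n}\to\min F$ along the subsequence. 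Since this limit value is independent of the subsequence, the whole family of minimum values converges.

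The main point requiring care is that~\eqref{eq:static_OT} need not have a unique minimizer. The statement must therefore be read as saying that every weak cluster point of $(\gamma^\beta_{\rm opt})$ is an optimal plan. Full convergence of $\gamma^\beta_{\rm opt}$ holds only when the Kantorovich minimizer is unique. A second point is that Proposition~\ref{prop:LimSupEst} must apply to every competitor $\gamma\in\Gamma(\mu_0,\mu_1)$, including one with $\calH(\gamma\entsep\leb\otimes\leb)=\infty$. This is fine, because the block approximation needs only the finite marginal entropies, which are assumed. Finally, one has to check that minimizers of $F_\beta$ exist for small $\beta$. This follows from the preceding proposition, whose assumptions hold because the marginals have finite entropy.
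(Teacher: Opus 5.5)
Your proposal is correct and follows the paper's route: $\Gamma$-convergence comes from Lemma~\ref{lem:LimInfEst} together with Proposition~\ref{prop:LimSupEst}, and the convergence of minima and minimizers comes from the fundamental theorem of $\Gamma$-convergence, with equi-coercivity supplied by weak compactness of $\Gamma(\mu_0,\mu_1)$; you prove that theorem by hand, whereas the paper cites \cite{dal_maso_introduction_1993}. Your caveat is also well taken: the uniqueness of the minimizer of \eqref{eq:static_SBP} that the paper invokes does not make the Kantorovich minimizer unique, so in general the statement only guarantees that every weak cluster point of $\gamma^\beta_{\rm opt}$ is optimal for \eqref{eq:static_OT}.
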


\begin{proof}
    The $\Gamma$-convergence is a direct consequence of Lemma~\ref{lem:LimInfEst} together with Proposition~\ref{prop:LimSupEst}. Recalling that the minimizer of~\eqref{eq:static_SBP} is unique by Proposition~\ref{prop:existence_static_SBP}, the convergence of minimizers and the minimum values follows from \cite{dal_maso_introduction_1993}[Proposition 7.18], where equi-coercivity holds due to the compactness of $\Gamma(\mu_0,\mu_1)$ in the weak topology. 
\end{proof}

\section{The dynamic Schrödinger problem}\label{sec:dynamic}

Having established a rigorous connection between~\eqref{eq:EntrOT} and~\eqref{eq:static_OT}, we now look at the dynamic formulations. First, we derive the dynamic Schrödinger problem from the static one under additional regularity assumptions on initial and final data and show the equivalence to the static one as well as uniqueness of solutions. Then, we extend the found formulation to a broader class and study well-posedness in this case. 
The main result of this section is contained in Theorem~\ref{thm:equiv_static_dynamic}, where we prove the equivalence between the static Schrödinger problem~\eqref{eq:static_SBP} and a dynamic formulation. This is in analogy to the optimal transport case on metric graphs, where a Benamou-Brenier formulation has been derived \cite[Corollary 1]{MatthiasErbar2022}.

In order to formulate the dynamic version of~\eqref{eq:static_SBP}, we first specify the constraint set and introduce our notion of solutions to the continuity equation. 
\begin{definition}\label{def:CE_CEb}
    Let $\mu_0,\mu_1\in\Prob(\omgg)$ be given. A pair of measures $(\mu_t, J_t) \in \Prob(\omgg)\times \Meas(\omge)$ for $t\in(0,1)$ is called \emph{weak solution to the continuity equation} if the following hold true.
    \begin{enumerate}[label=\roman*)]
        \item The map $t\mapsto \mu_t$ is weakly continuous,
        \item $\int_0^1 \abs{J_t}\dx{t} < +\infty$, and
        \item for all test functions $\varphi\in C^1(\omgg)$ and a.e. $t\in(0,1)$, we have
        \begin{align*}
            \frac{\dx{}}{\dx{t}} \int_{\omgg} \varphi \dx{\mu_t} &= \int_{\omge} \nabla\varphi\cdot \dx{J_t}.
        \end{align*}
    \end{enumerate}
    We write $(\mu_t,J_t)\in\ceunreg$ for the set of weak solutions to the continuity equation that satisfy 
    \begin{align*}
        \mu_t\weakcvg\mu_i \quad \text{as } t\to i, \text{ for } i=0,1. 
    \end{align*}
    We call $(\mu_t, J_t)$ a \emph{bounded solution to the continuity equation}, if additionally
    \begin{enumerate}[label=\roman*)]
        \setcounter{enumi}{3}
        \item there exists a constant $C>0$ such that $\mu_t \leq C \leb$ for all $t\in[0,1]$.
    \end{enumerate}
    We denote by 
    \begin{align*}
        \cebound \coloneqq \set*{(\mu_t, J_t)\in \ceunreg \setsep (\mu_t, J_t) \text{ is a bounded solution}}.
    \end{align*}
\end{definition}

With this notation at hand, the dynamic optimal transport problem on metric graphs can be written as 
\begin{align}\label{eq:OT_dynamic}\tag{$\rm D_{OT}$}
    \frac{1}{2}W^2_2(\mu_0,\mu_1) = \inf\limits_{(\mu_t, v_t \mu_t) \in \ceunreg} \int_0^1 \int_{\omge} \frac{1}{2}\abs{v_t}^2 \dx{\mu_t} \dx{t}.
\end{align}

\subsection{Derivation from the static problem}

To derive the dynamic Schrödinger problem and show its equivalence to the static one, we mimic the rigorous calculations made in \cite[Chapter 5]{Tam17} in ${\rm RCD}^*(K,N)$-spaces. However, as metric graphs do not fall into this category, the regularity properties established in the abstract setting are not applicable. Instead, we rely on the explicit structure of the heat kernel given in Lemma~\ref{lem:RegHeat}. Additionally, we construct solutions to the continuity equation based on the following observation. 

Recall that, by Proposition~\ref{prop:existence_static_SBP} the unique solution of~\eqref{eq:static_SBP} is of the form $h_{\beta/2} (f^\beta\leb)\otimes (g^\beta\leb)$ for $\beta>0$ and two Borel measurable functions $f^\beta,g^\beta:\omgg \to [0,\infty)$.
Formally, the static Schrödinger problem~\eqref{eq:static_SBP} admits a dual version, namely 
\begin{align}\label{eq:dualSBP}\tag{D}
    \beta\sup\limits_{\varphi,\psi\in C(\omgg)} \int_{\omgg} \varphi \dx{\mu_0} + \int_{\omgg}\psi\dx{\mu_1} - \log\iint_{\omgg\times\omgg} e^{\varphi\oplus \psi} \dx{R_\beta}.
\end{align}
This dual problem admits a unique solution as well, which relates to the primal one via $\varphi = \log f^\beta$ and $\psi=\log g^\beta$. In general, these functions may not be continuous on $\sfG$.
Moreover, the functions $f^\beta$ and $g^\beta$ are merely measurable and might not be integrable. To overcome this issue, we make use of the propagation of the $L^\infty$-bounds from Proposition~\ref{prop:existence_static_SBP}, available under the following assumption. 

\begin{assumption}\label{as:Linfty}
    Throughout this section, assume that $\mu_i\in\Prob(\omgg)$ is given such that $\mu_i=\rho_i\leb$ with density $\rho_i \in L^\infty(\omgg)$ for $i=0,1$. 
\end{assumption}
By Proposition~\ref{prop:existence_static_SBP}, Assumption~\ref{as:Linfty} implies $f^\beta, g^\beta \in L^\infty(\omgg)$ for $\gamma^\beta_{\rm opt} = f^\beta \otimes g^\beta R_\beta$ the solution of~\eqref{eq:static_SBP}. As $\omgg$ is compact, this implies $f^\beta, g^\beta \in L^p(\omgg)$ for all $p\in[1,\infty]$ as well.

In order to derive the dynamic Schrödinger problem, we introduce the quantities
\begin{align}\label{eq:ce_from_fg}
    \begin{cases}
        f_t^\beta \coloneqq H_{\beta t/2} f^\beta\\
        \varphi_t^\beta \coloneqq \beta \log f_t^\beta
    \end{cases}, \quad \begin{cases}
        g_t^\beta\coloneqq H_{\beta(1-t)/2}g^\beta\\
        \psi_t^\beta\coloneqq\beta \log g_t^\beta
    \end{cases}, \quad \begin{cases}
        \rho_t^\beta \coloneqq f_t^\beta g_t^\beta\\
        \mu_t^\beta \coloneqq \rho_t^\beta \leb\\
        \vartheta_t^\beta\coloneqq(\psi_t^\beta - \varphi_t^\beta)/2
    \end{cases} 
\end{align}
for $t\in(0,1)$ and set
\begin{align*}
    f_0^\beta = f^\beta, \quad g_1^\beta= g^\beta, \quad \rho_0^\beta =\rho_0, \quad \rho_1^\beta=\rho_1, \quad \varphi^\beta_0 = \chi_{\supp(\rho_0)} \beta \log f^\beta , \quad \psi_0^\beta = \chi_{\supp(\rho_1)} \beta\log g^\beta .
\end{align*}
Note that for any $t>0$ we have $f_t^\beta, g_t^\beta >0$ by Lemma~\ref{lem:RegHeat}, which is why $\varphi^\beta_t, \psi^\beta_t$ and $\vartheta_t^\beta$ are well-defined for positive times. Because of Remark~\ref{rem:supp_fg} and using the extension $0\cdot\log0=0$, the initial and final conditions are well-defined as well.
Additionally, the functions from~\eqref{eq:ce_from_fg} inherit further regularity from the heat kernel. 

\begin{lemma}\label{lem:Regularity_fg}
    Let $\mu_0,\mu_1\in \Prob(\omgg)$ satisfy Assumption~\ref{as:Linfty}. Then
    $$
    f_t^\beta, g_t^\beta, \rho_t^\beta, \varphi_t^\beta, \psi_t^\beta, \vartheta_t^\beta\in C^1((0,1))\cap C^\infty(\omge) 
    $$ 
    with the notation introduced in~\eqref{eq:Notation_C1Cinfty}. The quantities defined in~\eqref{eq:ce_from_fg} satisfy the following equations pointwise on each edge 
    \begin{equation}\label{eq:formal_eq_fg}
    \begin{aligned}
        &\frac{\dx{}}{\dx{t}} f_t^\beta = \frac{\beta}{2} \Delta f_t^\beta, &&\frac{\dx{}}{\dx{t}} g_t^\beta = -\frac{\beta}{2} \Delta g_t^\beta,\\
        &\frac{\dx{}}{\dx{t}}  \varphi_t^\beta = \frac{1}{2}\abs{\nabla\varphi_t^\beta}^2 + \frac{\beta}{2}\Delta \varphi_t^\beta, && -\frac{\dx{}}{\dx{t}} \psi_t^\beta = \frac{1}{2}\abs{\nabla\psi_t^\beta}^2 + \frac{\beta}{2}\Delta \psi_t^\beta,\\
        &\frac{\dx{}}{\dx{t}} \rho_t^\beta + \nabla\cdot\bra{\rho_t^\beta \nabla\vartheta_t^\beta} = 0, &&\frac{\dx{}}{\dx{t}} \vartheta_t^\beta + \frac{1}{2}\abs{\nabla\vartheta_t^\beta}^2 = - \frac{\beta^2}{8}\bra*{2\Delta\log\rho_t^\beta + \abs{\nabla \log \rho_t^\beta}^2}
        \end{aligned}
    \end{equation}
    for all  $t\in(0,1)$. Moreover, $f^\beta_t, g^\beta_t, \rho_t^\beta, \varphi_t^\beta, \psi_t^\beta \in C(\omgg)$ and the following coupling conditions hold
    \begin{equation}\label{eq:BCs}
    \begin{aligned}
        &\sum\limits_{\sfe\in\sfE(\sfv)} \nabla f^{\beta,\sfe}_t(\sfv) \cdot n^{\sfe}_\sfv = 0, &&\sum\limits_{\sfe\in\sfE(\sfv)} \nabla g^{\beta,\sfe}_t(\sfv) \cdot n^{\sfe}_\sfv = 0,
        &&\sum\limits_{\sfe\in\sfE(\sfv)} \nabla \varphi^{\beta,\sfe}_t(\sfv) \cdot n^{\sfe}_\sfv = 0,\\
        & \sum\limits_{\sfe\in\sfE(\sfv)} \nabla \psi^{\beta,\sfe}_t(\sfv) \cdot n^{\sfe}_\sfv = 0,
        &&\sum\limits_{\sfe\in\sfE(\sfv)} \nabla \rho^{\beta,\sfe}_t(\sfv) \cdot n^{\sfe}_\sfv = 0, &&\sum\limits_{\sfe\in\sfE(\sfv)} \nabla \vartheta^{\beta,\sfe}_t(\sfv) \cdot n^{\sfe}_\sfv = 0.
        \end{aligned}
    \end{equation}
\end{lemma}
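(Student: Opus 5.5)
Regularity comes first and drives everything else. By Assumption~\ref{as:Linfty} and Proposition~\ref{prop:existence_static_SBP}, $f^\beta,g^\beta\in L^\infty(\omgg)\subset L^2(\omgg)$. Lemma~\ref{lem:RegHeat} with $p=2$ then gives $H_sf^\beta,H_sg^\beta\in C^\infty(\omge)$ for every $s>0$, and these functions solve~\eqref{eq:heat} pointwise, including continuity across vertices and the Kirchhoff conditions. For $t\in(0,1)$ both times $\beta t/2$ and $\beta(1-t)/2$ are positive. Hence $f_t^\beta,g_t^\beta$ are smooth on each edge, belong to $C(\omgg)$, and satisfy the first two coupling conditions in~\eqref{eq:BCs}.

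For time regularity I would use the eigenexpansion~\eqref{eq:eigenexp}:
\begin{align*}
f_t^\beta=\sum_k e^{-\lambda_k\beta t/2}\skp{f^\beta,\psi_k}\psi_k .
\end{align*}
Termwise differentiation in $t$ is justified for $t$ in compact subsets of $(0,1)$, because the exponential factors decay faster than any power of $\lambda_k$ and the $C^m$-norms of $\psi_k$ grow at most polynomially in $\lambda_k$ (Weyl asymptotics on graphs). The same argument applies to $g_t^\beta$. This yields $t\mapsto f_t^\beta(x)\in C^1((0,1))$ together with $\partial_tf_t^\beta=\frac\beta2\Delta f_t^\beta$ and $\partial_tg_t^\beta=-\frac\beta2\Delta g_t^\beta$, where the sign and the factor $\beta/2$ come from the chain rule. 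This convergence step is where I expect the main technical work. The alternative is a direct appeal to smoothness of the semigroup in $(t,x)$ for $t>0$, based on the path-sum formula and Lemma~\ref{lem:GaussianBnds}.

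Next I use strict positivity. Lemma~\ref{lem:RegHeat} gives $f_t^\beta,g_t^\beta>0$, and by continuity on the compact set $\omgg$ they are bounded below by a positive constant on each time slice. Hence $\log$ is smooth on their range, and $\varphi_t^\beta,\psi_t^\beta,\vartheta_t^\beta,\rho_t^\beta$ inherit the $C^1((0,1))\cap C^\infty(\omge)$ regularity and continuity on $\omgg$. The Kirchhoff conditions follow from the chain rule at each vertex. Since $f_t^\beta(\sfv)$ is a single value common to all incident edges,
\begin{align*}
\sum_{\sfe\in\sfE(\sfv)}\nabla\varphi^{\beta,\sfe}_t(\sfv)\, n^\sfe_\sfv=\frac{\beta}{f_t^\beta(\sfv)}\sum_{\sfe\in\sfE(\sfv)}\nabla f^{\beta,\sfe}_t(\sfv)\, n^\sfe_\sfv=0 .
\end{align*}
The same argument works for $\psi$ and hence for $\vartheta$ by linearity. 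For $\rho$ I apply the product rule: $\nabla\rho=g\nabla f+f\nabla g$, and $f,g$ take common vertex values, so the weighted sum again vanishes.

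The PDEs in~\eqref{eq:formal_eq_fg} are pointwise computations on each edge. Writing $f=e^{\varphi/\beta}$, we have $\Delta f=f\bigl(\Delta\varphi/\beta+|\nabla\varphi|^2/\beta^2\bigr)$, which gives the Hamilton--Jacobi--Bellman equations for $\varphi$ and $\psi$. For $\rho=fg$ one gets $\partial_t\rho=\frac\beta2(g\Delta f-f\Delta g)=\frac\beta2\nabla\cdot(g\nabla f-f\nabla g)$. Using $\rho\nabla\vartheta=\frac\beta2(g\nabla f-f\nabla g)$, this is the continuity equation. For $\vartheta$ I subtract the two HJB equations and substitute $\log\rho=(\varphi+\psi)/\beta$. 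Then $\frac14(|\nabla\psi|^2-|\nabla\varphi|^2)$ is rewritten via $\nabla\vartheta$ and $\frac\beta2\nabla\log\rho$, and $\Delta(\psi+\varphi)$ is replaced by $\beta\Delta\log\rho$. This is routine algebra, carried out with the sign conventions of~\eqref{eq:formal_eq_fg}.
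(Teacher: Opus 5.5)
Your proposal is correct and is essentially the paper's argument spelled out: the paper also gets the regularity of $f_t^\beta,g_t^\beta$ from Lemma~\ref{lem:RegHeat} and the vertex conditions from the heat kernel, transfers both to $\varphi_t^\beta,\psi_t^\beta,\rho_t^\beta,\vartheta_t^\beta$ using positivity, and treats the PDEs as direct computations; your eigenexpansion argument only adds the $C^1$-in-time detail the paper leaves implicit. You should fix two sign slips in your algebra: in fact $\rho_t^\beta\nabla\vartheta_t^\beta=\frac\beta2\bigl(f_t^\beta\nabla g_t^\beta-g_t^\beta\nabla f_t^\beta\bigr)$, which is what makes $\partial_t\rho_t^\beta=\frac\beta2\nabla\cdot\bigl(g_t^\beta\nabla f_t^\beta-f_t^\beta\nabla g_t^\beta\bigr)$ the stated continuity equation (your identity gives the opposite sign), and $\partial_t\vartheta_t^\beta=-\frac14\bigl(|\nabla\psi_t^\beta|^2+|\nabla\varphi_t^\beta|^2\bigr)-\frac\beta4\Delta(\psi_t^\beta+\varphi_t^\beta)$ involves the sum of squares, which you expand via $\nabla\psi_t^\beta=\nabla\vartheta_t^\beta+\frac\beta2\nabla\log\rho_t^\beta$ and $\nabla\varphi_t^\beta=-\nabla\vartheta_t^\beta+\frac\beta2\nabla\log\rho_t^\beta$ (the difference of squares you wrote only produces a cross term in $\nabla\vartheta_t^\beta\cdot\nabla\log\rho_t^\beta$, so the identity would not close).
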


\begin{proof}
    Since $\omgg$ is compact, we have $L^\infty(\omgg)\hookrightarrow L^p(\omgg)$ for $p\in[1,\infty)$. Then, the regularity of $f_t^\beta$ and $g_t^\beta$ is direct a consequence of their definition and Lemma~\ref{lem:RegHeat}. The same holds for $\varphi_t^\beta, \psi_t^\beta, \vartheta_t^\beta$.

    The equations from~\eqref{eq:formal_eq_fg} can be verified by direct computations, which are possible due to the already established regularity. 
    The continuity and coupling conditions~\eqref{eq:BCs} on the other hand are a consequence of the definition of the heat kernel. 
\end{proof}
These properties suffice to rigorously adapt the calculations from \cite[Section 5.4]{Tam17} on metric graphs and to prove the main result of this section. In order to allow for initial and final data with bounded support, we additionally use the arguments presented in \cite[Theorem 4.1]{gigli_benamou-brenier_2018}.

\begin{theorem}\label{thm:equiv_static_dynamic}
    For $\mu_0, \mu_1 \in \Prob(\omgg)$ with $\calH(\mu_0\entsep\leb), \calH(\mu_1\entsep \leb) < +\infty$ satisfying Assumption~\ref{as:Linfty}, the static Schrödinger problem~\eqref{eq:static_SBP} is equivalent to the following dynamic formulation 
    \begin{align}\tag{$\rm D_{SP}$}\label{eq:BB_SBP}
        \frac{\beta}{2}\bra*{\calH(\mu_0\entsep\leb) + \calH(\mu_1\entsep\leb)} + \inf\limits_{(\rho_t\leb,v_t\rho_t\leb)\in\cebound} \int_0^1 \int_{\omge} \bra*{\frac{1}{2} \abs{v_t}^2 + \frac{\beta^2}{8}\abs{\nabla\log \rho_t}^2 } \rho_t\dx{x}\dx{t}
    \end{align}
    whose unique minimizer is $(\rho_t^\beta, \nabla\vartheta_t^\beta \rho_t^\beta) \in \cebound$. 
\end{theorem}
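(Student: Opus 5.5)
We prove two inequalities: the value of \eqref{eq:static_SBP} is at most the value of \eqref{eq:BB_SBP} for every admissible curve, with equality along $(\rho_t^\beta,\nabla\vartheta_t^\beta\rho_t^\beta)$. Uniqueness then follows from strict convexity.

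\textbf{Step 1: the candidate is admissible.} Lemma~\ref{lem:Regularity_fg} gives the regularity, the continuity equation $\partial_t\rho_t^\beta+\nabla\cdot(\rho_t^\beta\nabla\vartheta_t^\beta)=0$ on each edge, and the Kirchhoff conditions~\eqref{eq:BCs}. Testing with $\varphi\in C^1(\omgg)$ and integrating by parts edgewise, the vertex terms cancel by continuity of $\varphi$ and the flux condition for $\rho^\beta_t\nabla\vartheta^\beta_t$. Hence (iii) of Definition~\ref{def:CE_CEb} holds.

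Boundedness comes from the maximum principle in Lemma~\ref{lem:RegHeat}: $\rho_t^\beta\le\norm{f^\beta}_{L^\infty}\norm{g^\beta}_{L^\infty}$, using the $L^\infty$-bounds of Proposition~\ref{prop:existence_static_SBP}.

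For the boundary values, $\rho^\beta_t\leb\weakcvg\rho_0\leb$ as $t\to0$. This holds because $f^\beta_t\to f^\beta$ and $g^\beta_t\to H_{\beta/2}g^\beta$ in $L^2$, and their product recovers $\rho_0$ by the Schrödinger system~\eqref{eq:SchröSyst}. The case $t\to1$ is symmetric.

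\textbf{Step 2: equality of values along the candidate.} Write $\rho=fg$, so that $\nabla\log\rho=(\nabla\psi+\nabla\varphi)/\beta$ and $v=\nabla\vartheta=(\nabla\psi-\nabla\varphi)/2$. Then
\[
\tfrac12|v|^2+\tfrac{\beta^2}{8}|\nabla\log\rho|^2=\tfrac14\bigl(|\nabla\varphi|^2+|\nabla\psi|^2\bigr).
\]
Using the Hamilton--Jacobi equations in~\eqref{eq:formal_eq_fg} and integrating by parts (no vertex terms, by~\eqref{eq:BCs}), one computes $\frac{\dx{}}{\dx{t}}\int\varphi_t\rho_t$ and $\frac{\dx{}}{\dx{t}}\int\psi_t\rho_t$. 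This shows that the action of the candidate equals
\[
\tfrac12\Bigl[\int\psi_t\rho_t-\int\varphi_t\rho_t\Bigr]_{t=0}^{t=1}.
\]
Since $\varphi_t+\psi_t=\beta\log\rho_t$, this quantity equals $\int\varphi_0\rho_0+\int\psi_1\rho_1-\frac\beta2(\calH(\mu_0\entsep\leb)+\calH(\mu_1\entsep\leb))$.

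On the static side,
\[
\beta\calH(\gamma^\beta_{\rm opt}\entsep R_\beta)=\int\varphi_0\,\dx{\mu_0}+\int\psi_1\,\dx{\mu_1}.
\]
This gives equality of the two values. The limits $t\to0,1$ of $\int\varphi_t\rho_t$ and $\int\psi_t\rho_t$ must be justified. Following \cite[Theorem 4.1]{gigli_benamou-brenier_2018}, we integrate on $[\epsilon,1-\epsilon]$ and pass to the limit using monotone convergence, the bound $\varphi_t\le\beta\log\norm{f^\beta}_\infty$, and Remark~\ref{rem:supp_fg}, which handles the sets where $\rho_i=0$.

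\textbf{Step 3: lower bound for arbitrary competitors.} Take any $(\rho_t\leb,v_t\rho_t\leb)\in\cebound$ with finite action. Test the continuity equation with $\varphi_t^\beta$ and $\psi_t^\beta$ on $[\epsilon,1-\epsilon]$; these are smooth and satisfy Kirchhoff conditions. Using Young's inequality on $\langle v,\nabla\psi\rangle-\frac12|\nabla\psi|^2$ together with $\frac\beta2\int\Delta\psi\,\rho=-\frac\beta2\int\nabla\psi\cdot\nabla\rho$, one obtains that the action is at least $\int\varphi_0\rho_0+\int\psi_1\rho_1$ minus the entropy terms. Equivalently, we test against the time-dependent dual potentials, which is the dual inequality~\eqref{eq:dualSBP}.

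Equality in Young's inequality forces $v_t=\nabla\vartheta_t^\beta$ and $\rho_t=\rho^\beta_t$. Alternatively, uniqueness follows from strict convexity of $(\rho,J)\mapsto\int(|J|^2/(2\rho)+\frac{\beta^2}{8}|\nabla\rho|^2/\rho)$.

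\textbf{Main obstacle.} The hardest part is the endpoint limits, in Step 3 in particular. For a general bounded competitor, the data $\varphi^\beta_t=\beta\log f^\beta_t$ may blow up near $t=0$ outside $\supp\rho_0$. The boundedness assumption $\rho_t\le C\leb$ is used here. We will combine it with the lower Gaussian bound of Lemma~\ref{lem:GaussianBnds}, which gives $f^\beta_t\ge c_t>0$, and with weak continuity to control $\int\varphi_t\rho_t$ as $t\to0$. We expect to need a one-sided estimate plus lower semicontinuity, rather than full convergence.
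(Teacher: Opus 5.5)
Your Steps 1 and 2 are sound and match the paper's Step 1. You are in fact more explicit than the paper about why the candidate lies in $\cebound$. Pairing $\varphi^\beta_t$ with $\rho^\beta_t=f^\beta_tg^\beta_t$, so that only the bounded quantity $f^\beta_t\log f^\beta_t$ appears, is a clean way to get the endpoint limits for the candidate.

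The genuine gap is in Step 3, at the point you flag as the main obstacle, and the plan you sketch there does not close it. After integrating on $[\delta,1-\delta]$ with the unshifted potentials, letting $\delta\to0$ requires
\[
\liminf_{\delta\to0}\int_{\omgg}\varphi^\beta_\delta\rho_\delta\dx{x}\;\geq\;\int_{\omgg}\varphi^\beta_0\rho_0\dx{x},
\]
together with the analogous statement for $\psi^\beta$ at $t=1$. This is a \emph{lower} bound on $\varphi^\beta_\delta=\beta\log f^\beta_\delta$. The one-sided bound $\varphi^\beta_t\leq\beta\log\norm{f^\beta}_{L^\infty}$ and lower semicontinuity arguments only give the opposite inequality.

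The Gaussian lower bound gives $f^\beta_t\geq c_t$ only with $c_t$ decaying like $e^{-c/t}$. By the upper Gaussian bound, $-\varphi^\beta_\delta$ really does blow up, at least like $d_\sfG(x,\supp\rho_0)^2/(2\delta)$, on $\{\rho_0=0\}$. A general bounded competitor may carry mass there for every $\delta>0$, and $\rho_\delta\leq C$ then only gives $\int_{\{\rho_0=0\}}\abs{\varphi^\beta_\delta}\rho_\delta\dx{x}\leq C\int_{\{\rho_0=0\}}\abs{\varphi^\beta_\delta}\dx{x}\to\infty$. Even on $\supp\rho_0$, $\log f^\beta$ need not be integrable, so the weak-$*$ convergence $\rho_\delta\to\rho_0$ cannot be paired with $\varphi^\beta_\delta$.

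The missing idea is the shift used in the paper, following \cite{gigli_benamou-brenier_2018}. Work with $\varphi^{\beta,\ve}_t=\beta\log(f^\beta_t+\ve)$, $\psi^{\beta,\ve}_t=\beta\log(g^\beta_t+\ve)$ and $\vartheta^{\beta,\ve}_t=(\psi^{\beta,\ve}_t-\varphi^{\beta,\ve}_t)/2$.

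\begin{itemize}
\item Since $f^\beta_t+\ve$ and $g^\beta_t+\ve$ still solve the forward and backward heat equations with Kirchhoff conditions, these potentials satisfy the same Hamilton--Jacobi equations as in~\eqref{eq:formal_eq_fg}. Your Young-inequality computation therefore goes through verbatim for every $\ve>0$.
\item The shifted potentials take values in $[\beta\log\ve,\beta\log(\norm{f^\beta}_{L^\infty}+\ve)]$, and similarly for $g^\beta$, uniformly in $t$. They converge in $L^2(\omgg)$ as $t\to0$ and as $t\to1$.
\item With $\rho_\delta\leq C$ and $\rho_\delta\leb\weakcvg\mu_0$, approximating the bounded limit potential by continuous functions in $L^1$, the endpoint terms converge as $\delta\to0$. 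This is exactly where condition iv) of $\cebound$ is used.
\item Only afterwards do you send $\ve\to0$, now against the fixed marginals $\rho_0,\rho_1$. There, monotone convergence and Remark~\ref{rem:supp_fg} yield $\int\varphi^\beta_0\rho_0+\int\psi^\beta_1\rho_1-\frac\beta2(\calH(\mu_0\entsep\leb)+\calH(\mu_1\entsep\leb))$.
\end{itemize}

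A smaller point on uniqueness: the functional is not strictly convex, since $\Psi$ is $1$-homogeneous. Equality in the convexity inequality only gives equal velocities and equal $\nabla\log\rho_t$. To conclude you still need $\rho^\beta_t>0$ for $t\in(0,1)$, continuity, and mass conservation. With shifted potentials, equality in Young's inequality only holds in the limit $\ve\to0$, so you must pass to that limit (for example via Fatou) before using it.
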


\begin{proof}
    The proof consists of three parts. In the first step, we show that there exists a pair $(\rho_t\leb, v_t\rho_t\leb)\in \ceunreg$ of solutions to the continuity equation such that 
    \begin{align}\label{eq:equal_competitor}
        \inf\limits_{\gamma\in\Gamma(\mu_0,\mu_1)} \beta\calH(\gamma\entsep R_\beta) &= \frac{\beta}{2}\bra*{\calH(\mu_0\entsep\leb) + \calH(\mu_1\entsep\leb)} + \int_0^1 \int_{\omge} \bra*{\frac{1}{2} \abs{v_t}^2 + \frac{\beta^2}{8}\abs{\nabla\log \rho_t}^2 } \rho_t\dx{x}\dx{t}.
    \end{align}
    Next, we prove that this choice is optimal in~\eqref{eq:BB_SBP}, thus attaining the infimum, and subsequently, we show the uniqueness of the solution.

    \textbf{Step 1:}
    From~\eqref{eq:formal_eq_fg}, we already know that $(\rho_t\leb, \nabla\vartheta_t\rho_t\leb)\in \cebound$ for $\rho_t^\beta$ and $\vartheta_t^\beta$ defined in~\eqref{eq:ce_from_fg}. Next, we show that this choice attains the equality in~\eqref{eq:equal_competitor}.
    
    Since $\vartheta_t^\beta \rho_t^\beta \in C^1((0,1))\cap C^\infty(\omge)$ by Lemma~\ref{lem:Regularity_fg} solving the respective equations~\eqref{eq:formal_eq_fg} pointwise in $\supp(\rho_0)\cap\supp(\rho_1)$, their product is differentiable in time with derivative 
    \begin{align*}
        \frac{\dx{}}{\dx{t}} (\vartheta_t^\beta \rho_t^\beta) &= \bra*{- \frac{1}{2}\abs{\nabla\vartheta_t^\beta}^2 - \frac{\beta^2}{8}\bra*{2\Delta\log\rho_t^\beta + \abs{\nabla \log \rho_t^\beta}^2}} \rho_t^\beta - \vartheta_t^\beta \nabla\cdot\bra{\rho_t^\beta \nabla \vartheta_t^\beta} \in C(\omge) 
    \end{align*}
    for all $t\in(0,1)$. Additionally, $C(\omge) \hookrightarrow L^1(\omge)$ since $\omge$ is bounded.
    Therefore, we can apply dominated convergence, which yields
    \begin{align*}
        \frac{\dx{}}{\dx{t}}\int_{\omgg} \vartheta_t^\beta \rho_t^\beta \dx{x} &= \int_{\omge} \bra*{- \frac{1}{2}\abs{\nabla\vartheta_t^\beta}^2 - \frac{\beta^2}{8}\bra*{2\Delta\log\rho_t^\beta + \abs{\nabla \log \rho_t^\beta}^2}}\rho_t^\beta \dx{x} - \int_{\omge} \vartheta_t^\beta\nabla\cdot\bra{\rho_t^\beta \nabla \vartheta_t^\beta} \dx{x}
    \end{align*}
    for $t\in(0,1)$.
    After integrating by parts, observing that $\nabla \log\rho_t^\beta \cdot \nabla\rho_t^\beta = \abs{\nabla\log\rho_t^\beta}^2\rho_t^\beta$ on $\supp(\rho_t^\beta)$, and using~\eqref{eq:BCs}, we obtain 
    \begin{align*}
        \frac{\dx{}}{\dx{t}}\int_{\omgg} \vartheta_t^\beta \rho_t^\beta \dx{x} &= \int_{\omge} \bra*{\frac{1}{2}\abs{\nabla\vartheta_t^\beta}^2 + \frac{\beta^2}{8}\abs{\nabla \log \rho_t^\beta}^2}\rho_t^\beta \dx{x}.
    \end{align*}
    Integrating over $[\delta, 1-\delta]$ for positive $\delta>0$ then yields
    \begin{align*}
        \int_{\omgg} \vartheta_{1-\delta}^\beta \rho_{1-\delta}^\beta \dx{x} - \int_{\omgg} \vartheta_\delta^\beta \rho_\delta^\beta \dx{x} =& \int_\delta^{1-\delta}\int_{\omge} \bra*{\frac{1}{2}\abs{\nabla\vartheta_t^\beta}^2 + \frac{\beta^2}{8}\abs{\nabla\log\rho_t^\beta}^2}\rho_t^\beta \dx{x} .
    \end{align*}
    In the term on the right-hand side, we can take the limit as $\delta\to 0$ by monotone convergence. For the left-hand side, observe that $ \vartheta_t^\beta = \psi_t^\beta - \frac{\beta}{2}\log\rho_t^\beta$, and therefore
    \begin{align}\label{eq:Equal_PosDelta}
        \int_{\omgg} \vartheta_{1-\delta}^\beta \rho_{1-\delta}^\beta \dx{x} &= \int_{\omgg} \psi_{1-\delta}^\beta\rho_{1-\delta}^\beta \dx{x} - \frac{\beta}{2}\calH(\mu_{1-\delta}^\beta \entsep\leb) .
    \end{align}
    From~\eqref{eq:ce_from_fg} together with the convergence established in Lemma~\ref{lem:RegHeat} applied to $f_t^\beta$ and $g_t^\beta$, we infer that $\psi_{1-\delta}^\beta \to \psi_1^\beta$ in $L^2(\omgg)$, and $\rho_{\delta}^\beta \to \rho_1$  in $L^2(\omgg)$, so that 
    \begin{align*}
        \int_{\omgg} \psi_{1-\delta}^\beta \rho_{1-\delta}^\beta \dx{x} \to \int_{\omgg} \psi_{1}^\beta \rho_1 \dx{x} \quad \text{and} \quad \calH(\mu_{1-\delta}\entsep\leb) \to \calH(\mu_{1}\entsep\leb).
    \end{align*}
    On the other hand, $\vartheta_t^\beta = -\varphi_t^\beta + \frac{\beta}{2}\log\rho_t^\beta$, and as before
    \begin{align*}
        \varphi_{\delta}^\beta \rho_{\delta}^\beta \dx{x} \to \int_{\omgg} \varphi_{0}^\beta \rho_0 \dx{x} \quad \text{and} \quad \calH(\mu_{\delta}\entsep\leb) \to \calH(\mu_{0}\entsep\leb). 
    \end{align*}
    Substituting these limits into~\eqref{eq:Equal_PosDelta} yields  
    \begin{align}
        \int_{\omgg} \varphi_0^\beta \rho_0 \dx{x} + \int_{\omgg} \psi_1^\beta \rho_1 \dx{x} 
        =& \frac{\beta}{2}\bra*{\calH(\mu_0\entsep\leb) + \calH(\mu_1\entsep\leb)} + \int_0^1\int_{\omge} \bra*{\frac{\abs{\nabla\vartheta_t^\beta}^2}{2} + \frac{\beta^2}{8}\abs{\nabla \log\rho_t^\beta}^2}\rho_t^\beta \dx{x}\dx{t} . \label{eq:dual_dynamic}
    \end{align}
    By definition of the entropy, it holds that
    \begin{align*}
        &\beta \calH(f^\beta\otimes g^\beta R_\beta\entsep R_\beta) = \beta \iint_{\omgg\times\omgg} (f^\beta(x) g^\beta(y)) \log(f^\beta(x)g^\beta(y))\dx{R_\beta(x,y)}\\
        =& \beta \iint_{\omgg\times\omgg} (f^\beta(x) g^\beta(y)) \log(f^\beta(x)) \dx{R_\beta(x,y)} + \beta \iint_{\omgg\times\omgg} (f^\beta(x) g^\beta(y)) \log(g^\beta(y))\dx{R_\beta(x,y)} .
    \end{align*}
    Now, the pair $(f^\beta, g^\beta)$ satisfies the Schrödinger system by Proposition~\ref{prop:existence_static_SBP}. Therefore, an application of the disintegration theorem \cite[Theorem 5.3.1]{Ambrosio_GF} and the fact that $\pi_\#^0 (f^\beta\otimes g^\beta R_\beta) = \mu_0$ allows us to rewrite the first integral as
    \begin{align*}
        \beta\iint_{\omgg\times\omgg} (f^\beta(x) g^\beta(y)) \log(f^\beta(x)) \dx{R_\beta(x,y)} &= \beta\int_{\omgg} \log(f^\beta(x)) \dx{\pi_\#^0 (f^\beta\otimes g^\beta R_\beta)(x)} = \int_{\omgg} \varphi_0^\beta(x) \dx{\mu_0(x)}.
    \end{align*}
    Similarly, it holds that $\pi_\#^1 (f^\beta\otimes g^\beta R_\beta) = \mu_1$, and the second integral is given as 
    \begin{align*}
        \beta\iint_{\omgg\times\omgg} (f^\beta(x) g^\beta(y)) \log(g^\beta(y)) \dx{R_\beta(x,y)} &= \beta\int_{\omgg} \log(g^\beta(y)) \dx{\pi_\#^1 (f^\beta\otimes g^\beta R_\beta)(y)} = \int_{\omgg} \psi_1^\beta(y) \dx{\mu_1(y)}.
    \end{align*}
    Therefore, the left-hand side of~\eqref{eq:dual_dynamic} coincides with $\beta \calH(f^\beta\otimes g^\beta R_\beta\entsep R_\beta)$, which was optimal by definition. 
    This shows that
    \begin{align*}
        \inf\limits_{\gamma\in\Gamma(\mu_0,\mu_1)} &\beta \calH(\gamma\entsep R_\beta) = \frac{\beta}{2}\bra*{\calH(\mu_0\entsep\leb) + \calH(\mu_1\entsep\leb)} + \int_0^1\int_{\omgg} \bra*{\frac{1}{2}\abs{\nabla\vartheta_t^\beta}^2 + \frac{\beta^2}{8}\abs{\nabla \log\rho_t^\beta}^2}\rho_t^\beta \dx{x}\dx{t} 
    \end{align*}
    for $\rho_t^\beta, \vartheta_t^\beta$ as constructed above. 

    \textbf{Step 2:}
    It remains to show that $(\rho^\beta_t\leb, \nabla\vartheta_t^\beta \rho_t^\beta \leb)$ is optimal in~\eqref{eq:BB_SBP}.
    To this end, we restrict to the case $(\mu_t, J_t)\in\cebound$, $\mu_t=\rho_t\leb$ and $J_t=\rho_t v_t \leb$ with
    \begin{align*}
        \int_0^1\int_{\omgg} \bra*{\frac{1}{2}\abs{v_t}^2 + \frac{\beta^2}{8}\abs{\nabla\log\rho_t}^2}\rho_t \dx{x}\dx{t} < \infty.
    \end{align*}
    Given such a pair, the family of probability measures $(\rho_t)_{t\in(0,1)}$ is absolutely continuous \cite[Theorem 3.7]{MatthiasErbar2022}. Now, our strategy is to mimic the calculations from the previous step in order to relate static and dynamic formulations. However, we can no longer rely on the additional structure of $\rho_t^\beta$ from~\eqref{eq:ce_from_fg} compared to general solutions to the continuity equation. In particular, Remark~\ref{rem:supp_fg} does not apply, and the support of $\rho_t$ may differ from the support of $\rho_t^\beta$. Therefore, the limits of the products $\varphi_\delta^\beta\rho_\delta$ and $\psi_{1-\delta}^\beta\rho_{1-\delta}$ do not follow directly as in the previous step. To circumvent this issue, we follow the ideas from \cite{gigli_benamou-brenier_2018} and, for $\ve>0$, introduce the shifted functions 
    \begin{align*}
        \varphi_t^{\beta,\ve} \coloneqq \beta \log\bra{f_t^\beta + \ve}, \quad \psi_t^{\beta,\ve} \coloneqq \beta \log\bra{g_t^\beta + \ve},  \quad \vartheta_t^{\beta,\ve} \coloneqq \bra{\psi_t^{\beta,\ve} - \varphi_t^{\beta,\ve}}/2,
    \end{align*}
    where $f_t^\beta$ and $g_t^\beta$ are still the quantities defined in~\eqref{eq:ce_from_fg}. These functions are bounded from below and have the same regularity as in Lemma~\ref{lem:Regularity_fg}. Moreover, the following equation holds pointwise 
    \begin{align*}
        \frac{\dx{}}{\dx{t}} \vartheta_t^{\beta,\ve}  + \frac{1}{4}\bra*{\beta\Delta \varphi_t^{\beta,\ve} + \beta\Delta \psi_t^{\beta,\ve} + \abs{\nabla \varphi_t^{\beta,\ve}}^2 + \abs{\nabla\psi_t^{\beta,\ve}}^2} &= 0
    \end{align*}
    as well as the boundary conditions 
    \begin{align*}
        \sum\limits_{\sfe\in\sfE(\sfv)} \nabla \varphi^{\beta,\ve,\sfe}_t(\sfv) \cdot n^{\sfe}_\sfv = 0 = \sum\limits_{\sfe\in\sfE(\sfv)} \nabla \psi^{\beta,\ve,\sfe}_t(\sfv) \cdot n^{\sfe}_\sfv.
    \end{align*}
    Note that the Sobolev space $W^{1,2}(\omgg)$ is infinitesimal Hilbertian by \cite[Theorem 3.5]{krautz2025weak}, and these properties allow us to apply the product rule from \cite[Lemma 3.5]{gigli_benamou-brenier_2018} to evaluate the following time derivative. With an additional integration by parts we conclude that 
    \begin{align*}
        \frac{\dx{}}{\dx{t}} \int_{\omgg} \vartheta_t^{\beta,\ve} \rho_t \dx{x} &= \int_{\omge} \bra*{-\frac{\abs{\nabla\psi_t^{\beta,\ve}}^2}{4} - \frac{\abs{\nabla\varphi_t^{\beta,\ve}}^2}{4} + \frac{\beta}{4}\nabla(\psi_t^{\beta,\ve} - \varphi_t^{\beta,\ve})\nabla\log \rho_t} \rho_t \dx{x} + \int_{\omge} \nabla\vartheta_t^{\beta,\ve} v_t \rho_t\dx{x}
    \end{align*}
    for a.e. $t\in (0,1)$, and Young's inequality gives 
    \begin{align*}
        \beta \nabla(\psi_t^{\beta,\ve} - \varphi_t^{\beta,\ve})\nabla\log \rho_t \leq \frac{1}{2}\abs{\nabla(\psi_t^{\beta,\ve} - \varphi_t^{\beta,\ve})}^2 + \frac{\beta^2}{2}\abs{\nabla\log \rho_t}^2 , \; \nabla\vartheta_t^{\beta,\ve} v_t \leq \frac{1}{4}\abs{\nabla(\psi_t^{\beta,\ve}-\varphi_t^{\beta,\ve})}^2 + \abs{v_t}^2.
    \end{align*}
    Similar as before, we can integrate over $[\delta,1-\delta]$ for some $\delta>0$. We obtain
    \begin{align}\label{eq:integral_delta}
        \int_{\omgg}\vartheta_{1-\delta}^{\beta,\ve} \rho_{1-\delta} \dx{x} - \int_{\omgg}\vartheta_\delta^{\beta,\ve}\rho_\delta\dx{x} &\leq \int_\delta^{1-\delta}\int_{\omgg} \bra*{\frac{\abs{v_t}^2}{2} + \frac{\beta^2}{8}\abs{\nabla\log\rho_t}^2}\rho_t \dx{x}\dx{t}.
    \end{align}
    It remains to take the limit as $\delta\to 0$, where the convergence of the right-hand side follows by monotone convergence. Using part iv) from Definition~\ref{def:CE_CEb} and Hölder's inequality, we estimate 
    \begin{align*}
        \abs*{\int_{\omgg} \vartheta_\delta^{\beta,\ve} \rho_\delta \dx{x} - \int_{\omgg} \vartheta_0^{\beta,\ve} \rho_0 \dx{x}} &\leq \abs*{\int_{\omgg} \bra*{\vartheta_\delta^{\beta,\ve} - \vartheta_0^{\beta,\ve}}\rho_\delta \dx{x}} + \abs*{\int_{\omgg} \vartheta_0^{\beta,\ve} \rho_\delta \dx{x} - \int_{\omgg} \vartheta_0^{\beta,\ve} \rho_0 \dx{x}}\\
        &\leq C\abs*{\int_{\omgg} \vartheta_\delta^{\beta,\ve} - \vartheta_0^{\beta,\ve} \dx{x}} + \abs*{\int_{\omgg} \vartheta_0^{\beta,\ve} \rho_\delta \dx{x} - \int_{\omgg} \vartheta_0^{\beta,\ve} \rho_0 \dx{x}}\\
        &\leq C' \norm{\vartheta_\delta^{\beta,\ve} - \vartheta_0^{\beta,\ve}}_{L^2(\omgg)} + \abs*{\int_{\omgg} \vartheta_0^{\beta,\ve} \rho_\delta \dx{x} - \int_{\omgg} \vartheta_0^{\beta,\ve} \rho_0 \dx{x}}
    \end{align*}
    for a constant $C'>0$. Note that the right-hand side is well-defined because of the shift by $\ve>0$. Using Lemma~\ref{lem:RegHeat}, we conclude that the first term vanishes as $\delta\to 0$. 
    Regarding the second term, note that $\vartheta_0^{\beta,\ve} \in L^\infty(\omgg)\hookrightarrow L^1(\omgg)$ for $\ve>0$, and we can approximate it by continuous functions. In particular, for any $n\in\N$ we find a function $\vartheta_0^n\in C(\omgg)$ such that $\norm{\vartheta_0^{\beta,\ve} - \vartheta_0^n}_{L^1(\omgg)} \leq \frac{1}{n}$ and $\lim_{n\to\infty} \norm{\vartheta_0^{\beta,\ve} - \vartheta_0^n}_{L^1(\omgg)} = 0$. For arbitrary $n\in\N$, we obtain
    \begin{align*}
        \abs*{\int_{\omgg} \vartheta_0^{\beta,\ve} \rho_\delta \dx{x} - \int_{\omgg} \vartheta_0^{\beta,\ve} \rho_0 \dx{x}} &\leq \int_{\omgg} \abs*{\vartheta_0^{\beta,\ve} - \vartheta_0^n} \rho_\delta \dx{x} + \abs*{\int_{\omgg} \vartheta_0^n \rho_\delta \dx{x} - \int_{\omgg} \vartheta_0^n \rho_0\dx{x}} + \int_{\omgg} \abs{\vartheta_0^n - \vartheta_0^{\beta,\ve} } \rho_0 \dx{x}\\
        &\leq \frac{2C}{n} + \abs*{\int_{\omgg} \vartheta_0^n \rho_\delta \dx{x} - \int_{\omgg} \vartheta_0^n \rho_0\dx{x}}.
    \end{align*}
    With the weak convergence, this gives in the limit
    \begin{align*}
        \lim\limits_{\delta\to 0} \abs*{\int_{\omgg} \vartheta_0^{\beta,\ve} \rho_\delta \dx{x} - \int_{\omgg} \vartheta_0^{\beta,\ve} \rho_0 \dx{x}} &= 0.
    \end{align*}
    Combining both arguments, we have shown that 
    \begin{align*}
        \lim\limits_{\delta\to0} \int_{\omgg}\vartheta_\delta^{\beta,\ve}\rho_\delta\dx{x} &= \int_{\omgg}\vartheta_0^{\beta,\ve}\rho_0\dx{x}.
    \end{align*}
    The convergence of the second term on the left-hand side of~\eqref{eq:integral_delta} follows analogously. Taking the limit $\delta\to0$ on both sides of the inequality gives
    \begin{align*}
        \int_{\omgg}\vartheta_1^{\beta,\ve} \rho_1 \dx{x} - \int_{\omgg}\vartheta_0^{\beta,\ve}\rho_0\dx{x} &\leq \int_0^1\int_{\omgg} \bra*{\frac{\abs{v_t}^2}{2} + \frac{\beta^2}{8}\abs{\nabla\log\rho_t}^2}\rho_t \dx{x}\dx{t}.
    \end{align*}
    It remains to consider the limit as $\ve\to0$ on the left-hand side. To this end, we substitute the identities 
    \begin{align*}
        \vartheta_1^{\beta,\ve} = \psi_1^{\beta,\ve} - \frac{\beta}{2}\log\bra*{\bra{f_1^\beta + \ve}\bra{g^\beta+\ve}}, \quad \vartheta_0^{\beta,\ve} = -\varphi_0^{\beta,\ve} + \frac{\beta}{2}\log\bra*{\bra{f^\beta + \ve}\bra{g_0^\beta+\ve}}
    \end{align*}
    to obtain 
    \begin{align*}
        \int_{\omgg}\vartheta_1^{\beta,\ve} \rho_1 \dx{x} - \int_{\omgg}\vartheta_0^{\beta,\ve}\rho_0\dx{x} &=
         \int_{\omgg}\psi_1^{\beta,\ve} \rho_1 \dx{x} + \int_{\omgg}\varphi_0^{\beta,\ve}\rho_0\dx{x}\\
         &\quad - \frac{\beta}{2}\int_{\omgg}\log\bra*{\bra{f_1^\beta + \ve}\bra{g^\beta+\ve}}\rho_1\dx{x}   - \frac{\beta}{2} \log\bra*{\bra{f^\beta + \ve}\bra{g_0^\beta+\ve}} \rho_0 \dx{x}\\
         &\overset{\ve\to0}{\longrightarrow} \int_{\omgg} \psi_1^\beta \rho_1\dx{x} + \int_{\omgg}\varphi_0^\beta\rho_0 \dx{x} - \frac{\beta}{2} \calH(\mu_1\entsep\leb) - \frac{\beta}{2} \calH(\mu_0\entsep\leb).
    \end{align*}
    Therefore, it holds that
    \begin{align*}
        \int_{\omgg} \psi_1^\beta \rho_1\dx{x} + \int_{\omgg}\varphi_0^\beta\rho_0 \dx{x} &\leq + \frac{\beta}{2} \calH(\mu_1\entsep\leb) + \frac{\beta}{2} \calH(\mu_0\entsep\leb)  +\int_0^1\int_{\omgg} \bra*{\frac{\abs{v_t}^2}{2} + \frac{\beta^2}{8}\abs{\nabla\log\rho_t}}\rho_t \dx{x}\dx{t},
    \end{align*}
    and as in the previous step, we can rewrite the left-hand side to get
    \begin{align*}
        \inf\limits_{\gamma\in\Gamma(\mu_0,\mu_1)} \beta \calH(\gamma\entsep R_\beta)
        &\leq \frac{\beta}{2}\bra*{\calH(\mu_0\entsep\leb) + \calH(\mu_1\entsep\leb)} + \int_0^1\int_{\omgg} \bra*{\frac{\abs{v_t}^2}{2} + \frac{\beta^2}{8}\abs{\nabla \log\rho_t}^2}\rho_t \dx{x}\dx{t} ,
    \end{align*}
    thus proving the equivalence between static and dynamic formulation.

    \textbf{Step 3:} We have already seen that $(\rho_t^\beta, \nabla\vartheta_t^\beta \rho_t^\beta)\in \cebound$ is a minimizer of~\eqref{eq:BB_SBP}. The uniqueness now follows as in \cite[Theorem 4.1]{gigli_benamou-brenier_2018}.
\end{proof}

As a consequence of the equivalence and the $\Gamma$-convergence established in Theorem~\ref{thm:gamma_cvg_static}, we obtain the convergence of the minimum values in the dynamic setting as well. 

\begin{theorem}\label{thm:gamma_cvg_dynamic}
    Let $\mu_0, \mu_1 \in \Prob(\omgg)$ with $\calH(\mu_0\entsep\leb), \calH(\mu_1\entsep \leb) < +\infty$ satisfying Assumption~\ref{as:Linfty} be given, and define
    \begin{align*}
        \calS_\beta (\mu_0,\mu_1) \coloneqq \frac{\beta}{2}&\bra*{\calH(\mu_0\entsep\leb) + \calH(\mu_1\entsep\leb)} \\&+ \inf\limits_{(\rho_t\leb,v_t\rho_t\leb)\in\cebound} \int_0^1 \int_{\omge} \bra*{\frac{1}{2} \abs{v_t}^2 + \frac{\beta^2}{8}\abs{\nabla\log \rho_t}^2 } \rho_t\dx{x}\dx{t}.
    \end{align*}
    Then, $\lim\limits_{\beta \to 0} \calS_\beta (\mu_0,\mu_1) = \frac{1}{2}W^2_2(\mu_0,\mu_1)$, where $W_2(\mu_0,\mu_1)$ is the Wasserstein distance on $\omgg$ defined in~\eqref{eq:static_OT}, and minimizers of~\eqref{eq:BB_SBP} converge to minimizers of the dynamic optimal transport problem~\eqref{eq:OT_dynamic}.
\end{theorem}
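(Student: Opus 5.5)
The plan is to obtain the convergence of the values from the two equivalences already proven, and the convergence of the minimizers from a compactness argument for curves.

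\textbf{Step 1 (values).} By Theorem~\ref{thm:equiv_static_dynamic}, for every $\beta>0$ we have $\calS_\beta(\mu_0,\mu_1)=\inf_{\gamma\in\Gamma(\mu_0,\mu_1)}\beta\calH(\gamma\entsep R_\beta)$. By the computation in the proof of the entropic reformulation, this equals the minimum value of \eqref{eq:EntrOT}. That is, it is the value of the functional with cost $\tfrac12 d_\sfG^2-\beta\log\bigl(h_{\beta/2}/k_{\beta/2}(d_\sfG)\bigr)$ and constant $\tfrac{\beta}{2}\log(2\beta\pi)$, which is exactly the functional appearing in Lemma~\ref{lem:LimInfEst} and Proposition~\ref{prop:LimSupEst}. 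Theorem~\ref{thm:gamma_cvg_static} then says that these minimum values converge to $\min_{\gamma}\iint\tfrac12 d_\sfG^2\dx{\gamma}=\tfrac12W_2^2(\mu_0,\mu_1)$. This gives $\lim_{\beta\to0}\calS_\beta(\mu_0,\mu_1)=\tfrac12W_2^2(\mu_0,\mu_1)$. In particular, $\calS_\beta$ is bounded uniformly for $\beta\in(0,1]$.

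\textbf{Step 2 (compactness of minimizers).} Let $(\rho^\beta_t\leb,J^\beta_t)$ with $J^\beta_t=\nabla\vartheta^\beta_t\rho^\beta_t\leb$ be the unique minimizer from Theorem~\ref{thm:equiv_static_dynamic}.

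1. The entropies $\calH(\mu_i\entsep\leb)$ are fixed and bounded below, since $\leb(\omgg)<\infty$, and the Fisher information term is nonnegative. Hence the kinetic action $A_\beta:=\int_0^1\int_{\omge}\tfrac12|\nabla\vartheta^\beta_t|^2\rho^\beta_t\dx{x}\dx{t}$ is bounded uniformly, and in fact $\limsup_{\beta\to0}A_\beta\le\tfrac12W_2^2(\mu_0,\mu_1)$ by Step 1.
2. By the Benamou–Brenier characterization of \cite{MatthiasErbar2022} (cf.\ \cite[Theorem 3.7]{MatthiasErbar2022}), bounded action yields $W_2(\mu^\beta_s,\mu^\beta_t)\le \sqrt{2A_\beta}\,|t-s|^{1/2}$. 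So the curves are equi-Hölder in $(\Prob(\omgg),W_2)$, which is compact.
3. By Arzelà–Ascoli, along a subsequence $\beta_n\to0$, $\mu^{\beta_n}_t\to\mu_t$ uniformly in $t$. The limit keeps the endpoints $\mu_0,\mu_1$.
4. By Cauchy–Schwarz, $\|J^\beta\|_{\Meas([0,1]\times\omge)}\le\sqrt{2A_\beta}$. Hence $J^{\beta_n}\weakcvg J$ weakly-$*$, after passing to a further subsequence.
5. The weak formulation of the continuity equation, tested against $\eta(t)\varphi(x)$ with $\varphi\in C^1(\omgg)$, is linear in $(\mu,J)$. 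It therefore passes to the limit, so $(\mu_t,J_t)\in\ceunreg$.

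\textbf{Step 3 (optimality of the limit).} The Benamou–Brenier action $(\mu,J)\mapsto\int_0^1\int_{\omge}\tfrac12|\tfrac{\dx{J}}{\dx{\mu}}|^2\dx{\mu}\dx{t}$ is jointly convex and weakly-$*$ lower semicontinuous. It can be written as a supremum over continuous $(a,b)$ with $a+\tfrac12|b|^2\le0$ of $\int a\dx{\mu}+\int b\dx{J}$, where $\mu$ is viewed on $\omge$. Dropping the nonnegative Fisher term and using Step 1, we get
\begin{align*}
\int_0^1\int_{\omge}\tfrac12|v_t|^2\dx{\mu_t}\dx{t}\le\liminf_{n\to\infty}A_{\beta_n}\le\tfrac12W_2^2(\mu_0,\mu_1).
\end{align*}
So the limit is a minimizer of \eqref{eq:OT_dynamic}. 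Since every subsequence has a further subsequence converging to some minimizer, this is the claimed convergence. If the Wasserstein geodesic is unique, the whole family converges.

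\textbf{Main obstacle.} I expect the delicate point to be the limit passage on the disjoint union $\omge$ versus the quotient $\omgg$. The measures $\mu_t$ live on $\omgg$, while the fluxes live on $\omge$, where the vertices are duplicated. Lower semicontinuity of the action therefore requires lifting $\mu_t$ to $\omge$ consistently. Since the $\mu^\beta_t$ are absolutely continuous, this lifting is canonical for $\beta>0$. In the limit, however, mass could concentrate at vertices, and the duality formula must still control the action there. I would handle this by invoking the lower semicontinuity and compactness statements already established for \eqref{eq:OT_dynamic} in \cite{MatthiasErbar2022}, rather than reproving them.
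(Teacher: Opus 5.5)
Your argument is correct. For the values it is exactly the paper's proof: Theorem~\ref{thm:equiv_static_dynamic} identifies $\calS_\beta(\mu_0,\mu_1)$ with the static value, and Theorem~\ref{thm:gamma_cvg_static} gives the limit $\frac12 W_2^2(\mu_0,\mu_1)$.

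For the minimizers you take a different route. The paper stays on the static side and only cites three ingredients: the convergence of static minimizers from Theorem~\ref{thm:gamma_cvg_static}, the Benamou--Brenier formula of \cite{MatthiasErbar2022}, and uniqueness at fixed $\beta$. It does not explain how weak convergence of the plans $\gamma^\beta_{\rm opt}$ passes to the curves $(\rho^\beta_t,\nabla\vartheta^\beta_t\rho^\beta_t)$. That step is not automatic, because these curves are generated from $f^\beta,g^\beta$ by the heat flow rather than by interpolating $\gamma^\beta_{\rm opt}$.

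You work directly with the dynamic minimizers. The value convergence bounds the kinetic action, which gives equi-H\"older curves and equibounded fluxes. Lower semicontinuity of the action then shows that every limit point is optimal in \eqref{eq:OT_dynamic}. This needs nothing about the static minimizers and makes the topology explicit: uniform in $t$ for $W_2$, weak-$*$ for the fluxes, along subsequences. You also correctly note that convergence of the whole family needs uniqueness of the limiting geodesic, which uniqueness at fixed $\beta>0$ does not give.

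The vertex issue you flag can be closed without an extra citation. Lift $\mu^\beta$ canonically to $[0,1]\times\omge$, which is possible since $\rho^\beta_t\in L^1$. Extract a weak limit $\tilde\mu$ there together with $J$. Because the quotient map $\omge\to\omgg$ is continuous, $\tilde\mu$ projects onto the limit $\mu$. Now apply the standard joint lower semicontinuity of $\int\Psi$ on the compact set $[0,1]\times\omge$. The lifted pair $(\tilde\mu_t,J_t)$ then has action at most $\frac12W_2^2(\mu_0,\mu_1)$. The bound $W_2(\mu_0,\mu_1)\le\int_0^1\|v_t\|_{L^2(\tilde\mu_t)}\,dt$ forces equality, so the limit is optimal.
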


\begin{proof}
    By Theorem~\ref{thm:equiv_static_dynamic} we know that 
    \begin{align*}
        \calS_\beta(\mu_0,\mu_1) = \inf\limits_{\gamma\in\Gamma(\mu_0,\mu_1)} \beta \calH(\gamma\entsep R_\beta).
    \end{align*}
    The result is now a consequence of Theorem~\ref{thm:gamma_cvg_static} together with the Benamou-Brenier formula established in \cite[Corollary 1]{MatthiasErbar2022} and the uniqueness of the minimizers established in Theorem~\ref{thm:equiv_static_dynamic}.
\end{proof}

\subsection{Extension of the dynamic formulation}

In this section, we extend the dynamic Schrödinger problem introduced in Theorem~\ref{thm:equiv_static_dynamic} in two ways. First, we generalize the constraint set to allow for unbounded solutions to the continuity equation as well. Secondly, we include more general initial and final data $\mu_0,\mu_1\in \Prob(\omgg)$, dropping the assumption of bounded densities. 
Throughout this section, if $\mu\in\Prob(\omgg)$, $J\in \Meas(\omge)$ are absolutely continuous with respect to a reference measure we denote their densities by $\rho$ and $j$.
In order for all terms in~\eqref{eq:BB_SBP} to be well-defined, we further introduce the following function. 
Let $\psi:\R_{\geq 0}\times \R \to \R_{\geq 0}$ be given as the convex and non-negative function
\begin{align}\label{eq:LscCvxIntegrand}
    \Psi(u,v) \coloneqq \begin{cases}
        \frac{\abs{v}^2}{2u} &: u>0,\\
        0 &: u=0=v,\\
        +\infty &: \text{else}.
    \end{cases}
\end{align}
In analogy to \cite[Definition 5.2]{MatthiasErbar2022}, we introduce the Fisher information 
\begin{align}\label{eq:FisherFct}
    \calI(\mu) = \begin{cases}
        \int_{\omge} \Psi\bra*{\rho, \nabla\rho} \dx{x} &: \mu=\rho\leb \text{ and } \rho\in W^{1,1}(\omgg),\\ +\infty &:\text{else}
    \end{cases},
\end{align}
and the action functional 
\begin{align}\label{eq:ActionFct}
    \calA_{\sfm}(\mu, J) := \begin{cases}
        \int_{\omge} \Psi\bra{\rho, j} \dx{\sfm} &: \mu=\rho\sfm \text{ and } J=j\sfm\\ +\infty &:\text{else}
    \end{cases}
\end{align}
for a reference measure $\sfm\in\MeasPos(\omgg)$.
By \cite[Theorem 3.3]{bouchitte_new_1990}, the action functional is lower semi-continuous with respect to weak convergence of measures.
Additionally, it is $1$-homogeneous, making it independent of the choice of reference measure $\sfm\in\MeasPos(\omgg)$. If $\sfm=\leb$ we shorten the notation and write $\calA = \calA_\leb$ instead.
For $\beta>0$ this leads to the extended formulation of~\eqref{eq:BB_SBP}
\begin{align}\label{eq:SBPunreg}
    \inf\limits_{(\mu_t, J_t)\in \ceunreg} \calJ_\beta(\mu_t, J_t) \coloneqq \inf\limits_{(\mu_t, J_t)\in \ceunreg} \int_0^1 \calA(\mu_t, J_t) \dx{t} + \frac{\beta^2}{4} \int_0^1 \calI(\mu_t) \dx{t}.
\end{align}
Further, we define 
\begin{align*}
     \scrA(\mu_t, J_t) \coloneqq \int_0^1 \calA_\sfm(\mu_t, J_t) \dx{t},
\end{align*}
and if $\abs{J_t}\ll\mu_t$ for almost all $t\in(0,1)$ with density $\frac{\dx{J_t}}{\dx{\mu_t}} =: v_t$, the reference measure $\sfm = \mu_t$ leads to
\begin{align*}
    \int_0^1 \int_{\omge} \frac{1}{2}\abs*{v_t}^2 \dx{\mu_t} \dx{t}.
\end{align*}
Thus, we recover~\eqref{eq:OT_dynamic}.
If $\calJ_\beta(\mu_t,J_t)<+\infty$ for $(\mu_t,J_t)\in\ceunreg$, then $\mu_t$ enjoys additional regularity because of the finiteness of the Fisher information. For this reason, it will be convenient to define the set
\begin{align*}
    \cereg := \set*{(\mu_t,J_t)\in\ceunreg \setsep \mu_t, \abs{J_t}\ll\leb, \; \rho_t\in W^{1,1}(\omgg) \text{ for a.e. } t\in(0,1)},
\end{align*}
as well as the restricted problem 
\begin{align}\label{eq:SBPreg}
    \inf\limits_{(\mu_t,J_t)\in\cereg} \calJ_\beta(\mu_t,J_t).
\end{align}
However, we need to ensure that elements in $\cereg$ exist without imposing Assumption~\ref{as:Linfty}. 
In order to construct such curves, we again rely on properties of the heat kernel on metric graphs. In particular, we make use of its eigenfunction expansion~\eqref{eq:eigenexp}.
As the kernel satisfies the heat equation pointwise, see \cite{roth_spectre_1984}, its restriction to each edge defines a solution to the one-dimensional heat equation under suitable boundary conditions. Therefore, we can choose the eigenfunctions on the graph to be suitable eigenfunctions of the one-dimensional Laplacian. This leads to the ansatz
\begin{align}\label{eq:easy_expansion}
    \psi_k^\sfe (x) = a^\sfe_k \cos(\sqrt{\lambda_k} x) + b^\sfe_k \sin(\sqrt{\lambda_k} x)
\end{align}
for coefficients $a^\sfe_k, b^\sfe_k \geq 0$ to be determined depending on the one-dimensional boundary conditions.

Motivated by the discussion from Subsection~\ref{sec:discussion}, we define 
\begin{align}\label{eq:DefRecSeq}
    \mu_t^\beta := H_{\beta h(t)} \mu_t 
\end{align}
for $h(t) := \min\set{t,1-t}$. Again, we formally evaluate the time-derivative as in~\eqref{eq:formal_dt}. Here, we do not aim for sharp action bounds but rather rewriting the time derivative as a divergence of a suitable flux. To this end, we introduce a second kernel.

\begin{lemma}\label{NewKernel}
    Let $h_t(x,y)$ be the heat kernel from Definition~\ref{kernelPath} with eigenvalues $0 =\lambda_1 < \lambda_2 \leq \ldots$ and an orthonormal basis of generalized eigenfunctions $\bra*{\psi_k}_{k\in\N}$. For the functions $\Tilde{\psi}^\sfe_k(x) := - a_k \sin(\sqrt{\lambda_k}x) + b^\sfe_k \cos(\sqrt{\lambda_k}x) = \frac{1}{\sqrt{\lambda_k}}\nabla {\psi^\sfe_k}(x)$ the kernel
    \begin{align*}
        \Tilde{h}_t(x,y) &:= \sum\limits_{k=2}^\infty e^{-\lambda_k t} \Tilde{\psi}_k(x) \Tilde{\psi}_k(y)
    \end{align*}
    is well defined. Moreover, it holds that $-\Delta \Tilde{\psi}^\sfe_k = \lambda_k \Tilde{\psi}^\sfe_k$ on each edge $\sfe\in\sfE$ with the coupling conditions 
    \begin{align*}
        \sum\limits_{\sfe\in\sfE(v)} \Tilde{\psi}_k^\sfe(v)\cdot n^\sfe_\sfv = 0 \quad \text{and} \quad \nabla\Tilde{\psi}^\sfe_k(v) = \nabla\Tilde{\psi}^f_k(v).
    \end{align*}
\end{lemma}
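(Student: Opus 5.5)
We treat the three claims in turn: the eigenfunction equation, the coupling conditions, and well-definedness of $\Tilde h_t$. All of them come from the edgewise ansatz~\eqref{eq:easy_expansion} together with the standard coupling conditions satisfied by each $\psi_k$.

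\textbf{Step 1 (eigenfunction equation).} On each edge, $\psi_k^\sfe$ solves $-\Delta\psi_k^\sfe=\lambda_k\psi_k^\sfe$ and is of the form~\eqref{eq:easy_expansion}. Differentiating once gives $\nabla\psi_k^\sfe=\sqrt{\lambda_k}\,\Tilde\psi_k^\sfe$, so $\Tilde\psi_k^\sfe$ is again a linear combination of $\cos(\sqrt{\lambda_k}x)$ and $\sin(\sqrt{\lambda_k}x)$. Hence $-\Delta\Tilde\psi_k^\sfe=\lambda_k\Tilde\psi_k^\sfe$ edgewise. This requires $\lambda_k>0$, which holds for $k\ge2$ by the multiplicity-one statement for $\lambda_1=0$; this is exactly why the sum defining $\Tilde h_t$ starts at $k=2$.

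\textbf{Step 2 (coupling conditions).} The Kirchhoff condition $\sum_{\sfe\in\sfE(\sfv)}\nabla\psi_k^\sfe(\sfv)n^\sfe_\sfv=0$, divided by $\sqrt{\lambda_k}$, is precisely $\sum_{\sfe\in\sfE(\sfv)}\Tilde\psi_k^\sfe(\sfv)n^\sfe_\sfv=0$. For the second condition we use $\nabla\Tilde\psi_k^\sfe=\lambda_k^{-1/2}\Delta\psi_k^\sfe=-\sqrt{\lambda_k}\,\psi_k^\sfe$. Continuity of $\psi_k$ at vertices, $\psi_k^\sfe(\sfv)=\psi_k^\sff(\sfv)$, then gives $\nabla\Tilde\psi_k^\sfe(\sfv)=\nabla\Tilde\psi_k^\sff(\sfv)$. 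In this step one must track the orientation convention $n^\sfe_\sfv$ carefully, since $\nabla$ is taken with respect to the edge parametrization. A sign flip at one endpoint would turn a continuity condition into an anti-continuity condition. I expect the convention is chosen so that the statement holds as written.

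\textbf{Step 3 (well-definedness).} This is the main obstacle, since the $\Tilde\psi_k$ are neither normalized nor known to form a basis a priori. Integrating by parts on each edge, the boundary terms vanish by the Kirchhoff condition and continuity, which gives
\begin{align*}
\int_{\omge}|\Tilde\psi_k|^2\dx{x}=\frac{1}{\lambda_k}\int_{\omge}|\nabla\psi_k|^2\dx{x}=\frac{1}{\lambda_k}\int_{\omge}\psi_k(-\Delta\psi_k)\dx{x}=1 .
\end{align*}
So $\norm{\Tilde\psi_k}_{L^2}=1$. The same computation with $j\ne k$ shows that the $\Tilde\psi_k$ are orthogonal.

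For pointwise bounds I would argue edgewise. On an edge, $\Tilde\psi_k^\sfe$ is a trigonometric function of frequency $\sqrt{\lambda_k}$ with $L^2$ norm at most one. Such a function satisfies $\sup|\Tilde\psi_k^\sfe|\le C$, with $C$ depending only on $\ell_{\min}$ and uniform in $k$ for large $\lambda_k$, since $|a|^2+|b|^2$ is controlled by the $L^2$ norm once the edge contains many periods.

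Weyl asymptotics on metric graphs give $\lambda_k\sim ck^2$ (see \cite{kurasov_spectral_2024}). Therefore $\sum_k e^{-\lambda_kt}|\Tilde\psi_k(x)\Tilde\psi_k(y)|\le C^2\sum_k e^{-ck^2t}<\infty$ for $t>0$, uniformly in $x,y$. The same argument applied to the differentiated series yields absolute and locally uniform convergence, so $\Tilde h_t$ is well defined and smooth on each edge for $t>0$.
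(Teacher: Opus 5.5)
Your proposal is correct and follows the paper's proof. The eigenvalue equation comes from differentiating, the first coupling condition from dividing Kirchhoff's condition by $\sqrt{\lambda_k}$, and the second from $\nabla\tilde\psi_k^\sfe=-\sqrt{\lambda_k}\,\psi_k^\sfe$ together with continuity of $\psi_k$. That identity is a second derivative of $\psi_k^\sfe$, so it is invariant under reversing an edge's parametrization, and your orientation worry is moot. Well-definedness then follows from a $k$-uniform sup bound on $\tilde\psi_k$ and summability of $e^{-\lambda_k t}$.

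The only difference is in Step 3. You derive the uniform bound yourself, from $\|\tilde\psi_k\|_{L^2}=1$ (integration by parts plus Kirchhoff) and the trigonometric form on each edge, and you invoke Weyl asymptotics. The paper instead cites an $L^\infty$ bound on the normalized $\psi_k$ from Kurasov to bound the coefficients $a_k,b_k$, and cites Roth for $\sum_k e^{-\lambda_k t}<\infty$.
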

\begin{proof}
    Note that the sum starts at $k=2$, so that $\lambda_k>0$ is invertible. Moreover, by definition $-\Delta\psi_k(x) = \lambda_k \psi_k(x)$ in $C^\infty(\omge)$. Taking the derivative and dividing by $\sqrt{\lambda_k}$ shows that $\Tilde{\psi}_k$ solves the same equation, however, with different coupling conditions in the vertices. In particular, for all $\sfv\in\sfV$ we have that
    \begin{align*}
        0 = \sum\limits_{\sfe\in\sfE(v)} \nabla\psi_k^\sfe(v)\cdot n^\sfe_\sfv = \sqrt{\lambda_k} \sum\limits_{\sfe\in\sfE(v)} \Tilde{\psi}_k^\sfe(v)\cdot n^\sfe_\sfv,
    \end{align*}
    implying $\sum\limits_{\sfe\in\sfE(v)} \Tilde{\psi}_k^\sfe(v)\cdot n^\sfe_\sfv = 0$ by definition. Further
    \begin{align*}
        \lambda_k \psi_k(v) = \lambda_k\psi_k^\sfe(v) = - \Delta\psi_k^\sfe = - \sqrt{\lambda_k} \nabla \Tilde{\psi}^\sfe_k(v)
    \end{align*}
    for all $\sfe\in\sfE(v)$ which gives $\nabla \Tilde{\psi}_k^\sfe(v) = -\sqrt{\lambda_k}\psi_k(v)$ independent of the edge. 
    As noted in \cite[Section 11.2]{kurasov_spectral_2024}, there exists a constant $C>0$ such that $\abs{\psi_k(x)} \leq C \norm{\psi_k}_{L^2(\omgg)} = C$. Therefore we find a uniform bound $\Tilde{C}>0$ on the coefficients $a_k$, $b_k$. 
    As $\frac{1}{\sqrt{\lambda_k}} \leq \frac{1}{\sqrt{\lambda_2}}$ for $k\geq 2$ we conclude that
    \begin{align*}
        \sum\limits_{k=1}^\infty \abs*{e^{-\lambda_k t} \Tilde{\psi}^\sfe_k(x) \Tilde{\psi}^\sfe_k(y)} \leq \frac{\Tilde{C}^2}{\lambda_2} \sum\limits_{k=1}^\infty e^{-\lambda_k t}
    \end{align*}
    and the right hand side converges as shown in \cite{roth_spectre_1984}.
    
\end{proof}

With this new kernel we can rewrite~\eqref{eq:formal_dt} to show the following.

\begin{proposition}\label{prop:nonempty}
    For $\mu_i \in \calP(\omgg)$ with $\calH(\mu_i\entsep\leb)<+\infty$, $i=0,1$, we have $\ceunreg\neq\emptyset$ as well as $\cereg\neq\emptyset$. 
\end{proposition}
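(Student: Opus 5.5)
We will build an element of $\cereg$, which is a subset of $\ceunreg$, so nonemptiness of both follows at once. The construction has two stages. First we take a curve $(\mu_t,J_t)\in\ceunreg$ with finite action. Then we regularise it by the heat flow as in~\eqref{eq:DefRecSeq}, $\mu^\beta_t := H_{\beta h(t)}\mu_t$, and we write the time derivative~\eqref{eq:formal_dt} as the divergence of an explicit flux, using the kernel $\Tilde h_t$ from Lemma~\ref{NewKernel}.

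\textbf{Step 1: a base curve.} We start from the Benamou--Brenier formula \cite[Corollary 1]{MatthiasErbar2022}. Since $\omgg$ is compact, $W_2(\mu_0,\mu_1)<\infty$, so \eqref{eq:OT_dynamic} is finite and has a minimiser $(\mu_t,J_t)\in\ceunreg$ with $\int_0^1\calA(\mu_t,J_t)\dx{t}<\infty$. This already shows $\ceunreg\neq\emptyset$; alternatively, one can take a Wasserstein geodesic and apply \cite[Theorem 3.7]{MatthiasErbar2022}.

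\textbf{Step 2: regularised curve and its flux.} We set $\mu_t^\beta = H_{\beta h(t)}\mu_t$ for $t\in(0,1)$, $\mu_0^\beta=\mu_0$ and $\mu_1^\beta=\mu_1$.
\begin{itemize}
\item \emph{Regularity.} For $t\in(0,1)$ we have $\beta h(t)>0$. By Lemma~\ref{lem:RegHeat} together with the eigenfunction expansion~\eqref{eq:eigenexp}, $\rho^\beta_t$ is then smooth on each edge, continuous on $\omgg$, and lies in $W^{1,1}(\omgg)$.
\item \emph{Endpoints.} Weak continuity, including $\mu^\beta_t\weakcvg\mu_i$ as $t\to i$, follows from the weak continuity of $t\mapsto\mu_t$ combined with the strong continuity of the heat semigroup on $C(\omgg)$, applied by duality.
\item \emph{Candidate flux.} By the chain rule, the derivative splits into two terms, and we treat each separately.
\begin{itemize}
\item The first term, $\beta h'(t)\Delta H_{\beta h(t)}\mu_t$, equals the divergence of $J^{(1)}_t := \beta h'(t)\nabla\rho^\beta_t\,\leb$.
\item For the second term we test against $\varphi\in C^1(\omgg)$. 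By symmetry of the kernel, $\frac{\dx{}}{\dx{t}}\int H_s\varphi\dx{\mu_t}=\int\nabla H_s\varphi\cdot\dx{J_t}$ at fixed $s$. Differentiating the expansion termwise gives $\nabla_x h_s(x,y)=\sum_{k\ge2}e^{-\lambda_k s}\sqrt{\lambda_k}\,\Tilde\psi_k(x)\psi_k(y)$. After exchanging $x$ and $y$, the contribution becomes $\int\nabla\varphi(y)\cdot j^{(2)}_t(y)\dx{y}$, where $j^{(2)}_t(y)=\int_{\omge}\Tilde h_s(x,y)\dx{J_t(x)}$ with $s=\beta h(t)$. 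To make this identity exact we use the relation $\sqrt{\lambda_k}\psi_k=-\nabla\Tilde\psi_k$ and the vertex conditions from Lemma~\ref{NewKernel}.
\end{itemize}
\item \emph{Conclusion.} Setting $J^\beta_t:=J^{(1)}_t+j^{(2)}_t\leb$, the pair $(\mu^\beta_t,J^\beta_t)$ is a weak solution of the continuity equation, and both $\mu^\beta_t$ and $J^\beta_t$ are absolutely continuous with respect to $\leb$.
\end{itemize}

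\textbf{Step 3: integrability and the main obstacle.} It remains to verify $\int_0^1|J^\beta_t|\dx{t}<\infty$. We expect this to be the hardest step, because the kernel bounds degenerate as $s=\beta h(t)\to0$, i.e.\ near the endpoints $t\to0,1$.
\begin{itemize}
\item \emph{Bound on $J^{(1)}$.} From the Gaussian bounds (Lemma~\ref{lem:GaussianBnds}) and the path-sum formula we expect $\|\nabla_x h_s(\cdot,y)\|_{L^1}\lesssim s^{-1/2}$. This gives $|J^{(1)}_t|\lesssim\beta\,(\beta h(t))^{-1/2}$, which is integrable in $t$ because $h(t)^{-1/2}$ is integrable on $(0,1)$.
\item \emph{Bound on $j^{(2)}$.} We aim for $\sup_y\int|\Tilde h_s(x,y)|\dx{x}\le C$ uniformly in small $s$. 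This would give $|J^{(2)}_t|\le C|J_t|$, and $|J_t|$ is integrable in $t$ because the action of the base curve is finite. The crude bound from Lemma~\ref{NewKernel} blows up like $\sum_k e^{-\lambda_k s}\sim s^{-1/2}$, which is still integrable in $t$ but is not uniform in $s$.
\item \emph{Fallback.} If the uniform $L^1$ bound on $\Tilde h_s$ turns out to be hard, we do not need it. It suffices to have the pointwise estimate $|\Tilde h_s|\lesssim s^{-1/2}$, combined with $\sup_t|J_t|\le\sup_t\sqrt{2\calA(\mu_t,J_t)}$. This supremum is finite provided we choose the base curve to be a constant-speed geodesic, so that $\calA(\mu_t,J_t)$ is constant in $t$. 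The resulting bound $\int_0^1(\beta h(t))^{-1/2}\dx{t}<\infty$ then closes the argument.
\end{itemize}

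With these estimates, $(\mu^\beta_t,J^\beta_t)\in\cereg\subset\ceunreg$, which proves both claims.
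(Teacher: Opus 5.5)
Your construction is the paper's: you regularise a finite-action curve by $\mu^\beta_t=H_{\beta h(t)}\mu_t$ and use $\nabla_y h_s=-\nabla_x\Tilde h_s$ with the vertex conditions of Lemma~\ref{NewKernel} to get the flux $\int\Tilde h_{\beta h(t)}(\cdot,y)\dx{J_t(y)}$ plus a diffusive part. Your Step~3 also checks condition ii) of Definition~\ref{def:CE_CEb}, which the paper leaves unverified, although the $L^1$ gradient bound $\lesssim s^{-1/2}$ there needs the path-sum formula rather than Lemma~\ref{lem:GaussianBnds} (the crude eigenfunction estimate only gives $s^{-1}$, not integrable in $t$ for $s=\beta h(t)$), and one sign must be fixed: under the convention $\frac{\dx{}}{\dx{t}}\int\varphi\dx{\mu_t}=\int\nabla\varphi\cdot\dx{J_t}$ the diffusive flux is $J^{(1)}_t=-\beta h'(t)\nabla\rho^\beta_t\,\leb$, as in the paper.
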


\begin{proof}
    The statement $\ceunreg\neq \emptyset$ follows as the Wasserstein space on metric graphs is a geodesic space, together with the Benamou-Brenier formulation established in \cite[Corollary 1]{MatthiasErbar2022}. 
    It remains to show that $\cereg\neq\emptyset$.
    To this end, fix a curve $(\mu_t, J_t)\in \ceunreg$ with finite action, which exists due to the previous considerations. Then, as the entropy along the heat flow is finite, we have $\mu_t^\beta \ll \leb$ for a.e. and $\rho_t^\beta \in W^{1,1}(\omgg)$ by the regularity properties of the heat-kernel. 
    Let $\phi\in C^1([0,1])\cap C^1(\omgg)$ be a test-function, where the space is defined in analogy to~\eqref{eq:Notation_C1Cinfty}.
    Applying Fatou's lemma and substituting $\frac{\dx{}}{\dx{t}}(\phi_t h_{\beta h(t)}) = \partial_t \phi_t h_{\beta h(t)} + \phi_t \partial_t h_{\beta h(t)} \beta h'(t)$ gives 
    \begin{align*}
        &\int_0^1 \int_{\omgg} \partial_t \phi_t(x) \dx{\mu_t^\beta(x)} \dx{t} = \int_{\omgg} \int_0^1 \int_{\omgg}  \partial_t \phi_t(x) h_{\beta h(t)}(x,y) \dx{\mu_t(y)} \dx{t}\dx{x}\\
        &=  \int_{\omgg}  \int_0^1 \int_{\omgg} \frac{\dx{}}{\dx{t}}\bra*{\phi_t(x) h_{\beta h(t)}(d_\sfG(x,y))} \dx{\mu_t(y)} \dx{t} \dx{x}  - \int_{\omgg}  \int_0^1\int_{\omgg} \phi_t(x) \beta h'(t) \partial_t h_{\beta h(t)}(x,y) \dx{\mu_t(y)} \dx{t} \dx{x}.
    \end{align*}
    Since $(\mu_t, J_t)\in\ceunreg$, and the heat kernel solves the heat equation pointwise with standard coupling conditions, integration by parts yields 
    \begin{align}
        \int_0^1 \int_{\omgg} \partial_t \phi_t(x) \dx{\mu_t^\beta(x)} \dx{t} = -\int_0^1 &\int_{\omge} \nabla_y \bra*{\int_{\omgg}\phi_t(x) h_{\beta h(t)}(x,y) \dx{x}} \dx{J_t(y)} \dx{t} \notag \\&-  \int_0^1 \int_{\omge} \phi_t(x) \beta h'(t) \int_{\omgg} \Delta_x h_{\beta h(t)}(d_\sfG(x,y))\dx{x} \,  \dx{\mu_t(y)}  \dx{x} \dx{t} \notag\\
        = -\int_0^1 &\int_{\omge} \nabla_y \bra*{\int_{\omgg}\phi_t(x) h_{\beta h(t)}(x,y) \dx{x}} \dx{J_t(y)} \dx{t} \label{eq:DistrDeriv}\\& +  \int_0^1 \int_{\omge} \nabla_x\phi_t(x) \beta h'(t) \nabla_x \bra*{\int_{\omgg} h_{\beta h(t)}(d_\sfG(x,y))\dx{x} \,  \dx{\mu_t(y)}}  \dx{x} \dx{t}.\notag
    \end{align}
    Further, the new kernel $\Tilde{h}_t(x,y)$ defined in Lemma~\ref{NewKernel} satisfies
    \begin{align*}
        \nabla_x \Tilde{h}^\sfe_t(x,y)
        &= \sum\limits_{k=1}^\infty -\sqrt{\lambda_k} e^{-\lambda_k t} \bra*{a_k \sin(\sqrt{\lambda_k}x) + b^\sfe_k \cos(\sqrt{\lambda_k}x)}\cdot \bra*{-a_k \sin(\sqrt{\lambda_k}y) + b^\sfe_k \cos(\sqrt{\lambda_k}y)},
        \intertext{and similarly for the standard kernel by~\eqref{eq:easy_expansion}}
        \nabla_y h^\sfe_t(x,y) 
        &= \sum\limits_{k=1}^\infty \sqrt{\lambda_k} e^{-\lambda_k t} \bra*{a_k \cos(\sqrt{\lambda_k}x) + b^\sfe_k \sin(\sqrt{\lambda_k}x)}\cdot \bra*{-a_k \sin(\sqrt{\lambda_k}y) + b^\sfe_k \cos(\sqrt{\lambda_k}y)}.
    \end{align*}
    Comparing both identities, we observe that
    \begin{align*}
        \nabla_y h_t(x,y) = - \nabla_x \Tilde{h}_t(x,y)
    \end{align*}
    on each edge.
    Substituting this relation into~\eqref{eq:DistrDeriv} and integrating by parts then gives
    \begin{align*}
        &\int_0^1 \int_{\omgg} \partial_t \phi_t(x) \dx{\mu_t^\beta(x)} \dx{t}\\
        =& \int_0^1\int_{\omgg}  \int_{\omge}  \phi_t(x) \nabla_x\Tilde{h}_{\beta h(t)}(d_\sfG(x,y))\dx{x} \, \dx{J_t(y)} \dx{t} +  \int_0^1 \int_{\omge} \nabla_x\phi_t(x) \beta h'(t) \nabla_x \mu_t^\beta (x) \dx{x} \dx{t}\\
        =& -\int_0^1\int_{\omge}  \nabla_x \phi_t(x)  \int_{\omge} \Tilde{h}_{\beta h(t)}(x,y) \dx{J_t(y)} \dx{x} \dx{t}  +  \int_0^1 \int_{\omge} \nabla_x\phi_t(x) \beta h'(t) \nabla_x \mu_t^\beta (x) \dx{x} \dx{t}. 
    \end{align*}
    Now, with the definition
    \begin{align*}
        J_t^\beta (x) := \int_{\omge} \Tilde{h}_{\beta h(t)}(x,y) J_t(y)\dx{y} - \beta h'(t) \nabla \mu_t^\beta (x) ,
    \end{align*}
    the pair $(\mu_t^\beta, J_t^\beta)_{t\in[0,1]}$ is a weak solution to the continuity equation. Together with Lemma~\ref{lem:RegHeat}, this shows $(\mu_t^\beta, J_t^\beta)\in\cereg$.
\end{proof}

Still,~\eqref{eq:SBPreg} might not be finite if no solution with finite Fisher information exists. However, if such a curve does exist, the problem admits a minimizer.
To prove this result, we apply the direct method to a slightly modified version of~\eqref{eq:SBPreg} which coincides with the initial formulation on $\cereg$, and therefore with~\eqref{eq:SBPunreg} as well.
In general, the additional weak differentiability is not preserved along weakly converging sequences. 
For this reason, we introduce a third measure as a substitute for the weak gradient and extend the functional, thus allowing us to consider the new measure independently of $\mu_t$.
We follow the construction of such extensions from \cite{Dolbeaut_2008}.
Let $\sfm\in\MeasPos(\omge)$ be a reference measure.
By Lebesgue's decomposition theorem, any $\nu \in \calM(\omge)$ has a unique decomposition $\nu = \nu^a + \nu^s$, where $\nu^a\ll\sfm$ and $\nu^s\perp\sfm$. Moreover, we find $\sfm^\perp\in\calM_{\geq0}(\omge)$ such that $\sfm^\perp \perp \sfm$ and $\nu^s\ll\sfm^\perp$. We define 
\begin{align*}
    \Tilde{\calJ_\beta} (\mu_t, \nu_t, J_t) = \int_0^1 \calA_\sfm\Big(\frac{\dx{\mu^a_t}}{\dx{\sfm}},& \frac{\dx{J^a_t}}{\dx{\sfm}}\Big) + \calA_{\sfm^\perp}\Big(\frac{\dx{\mu^s_t}}{\dx{\sfm^\perp}}, \frac{\dx{J^s_t}}{\dx{\sfm^\perp}}\Big)\dx{t} \\
    &+\frac{\beta^2}{8} \int_0^1\calA_\sfm\Big(\frac{\dx{\mu^a_t}}{\dx{\sfm}}, \frac{\dx{\nu^a_t}}{\dx{\sfm}}\Big) + \calA_{\sfm^\perp}\Big(\frac{\dx{\mu^s_t}}{\dx{\sfm^\perp}}, \frac{\dx{\nu^s_t}}{\dx{\sfm^\perp}}\Big)\dx{t}
\end{align*}
as a functional on $\calM_{\geq0}([0,1]\times\omgg)\times\calM([0,1]\times\omge)\times\calM([0,1]\times\omge)$.

\begin{remark}\label{rem:Prop_tildeJ}
The extended functional $\Tilde{\calJ}_\beta$ enjoys several properties.
\begin{enumerate}[label=\roman*)]
    \item\label{samefct} It holds that $\Tilde{\calJ_\beta} (\mu_t, \nu_t, J_t) = \calJ_\beta(\mu_t,J_t)$ for $(\mu_t,J_t)\in\cereg$ with $\frac{\dx{\nu_t}}{\dx{\leb}} = \nabla \rho_t$ and $\mu_t, \abs*{J_t}\ll \leb$. In this case, the singular parts are zero and we can neglect the second integral. 
    \item As the integrand is $1$-homogeneous, the definitions of $\calJ_\beta$ and $\Tilde{\calJ_\beta}$ are independent of the choice of reference measure.
    \item By \cite[Theorem 2.1]{Dolbeaut_2008} the functional $\Tilde{\calJ_\beta}$ is lower semi-continuous with respect to weak convergence of measures. \label{lscextfct}
\end{enumerate}
\end{remark}

Property iii) from Remark~\ref{rem:Prop_tildeJ} establishes the first key ingredient of the direct method, namely lower semicontinuity. The second ingredient (compactness) needs a more detailed analysis.

\begin{theorem}\label{CmpctSub}
    For $\Lambda>0$, the set $M_\Lambda := \set*{(\mu_t,J_t)\in\cereg \setsep\calJ_\beta(\mu_t, J_t) \leq \Lambda}$ is compact with respect to weak convergence of measures.
\end{theorem}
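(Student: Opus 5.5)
The plan is to take an arbitrary sequence $(\mu^n_t,J^n_t)\in M_\Lambda$, extract a weakly convergent subsequence, and then check that the limit again lies in $M_\Lambda$. The lower semicontinuity will come from the extended functional $\Tilde{\calJ}_\beta$ of Remark~\ref{rem:Prop_tildeJ}. To this end I add the third measure $\nu^n_t\coloneqq\nabla\rho^n_t\leb$. By Remark~\ref{rem:Prop_tildeJ}\ref{samefct} we then have $\Tilde{\calJ}_\beta(\mu^n,\nu^n,J^n)=\calJ_\beta(\mu^n,J^n)\le\Lambda$.

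\textbf{Step 1: uniform mass bounds.} Since $\omgg$ is compact and $\mu^n_t\in\Prob(\omgg)$, the measures $\mu^n\in\MeasPos([0,1]\times\omgg)$ have total mass one. For the fluxes, Cauchy--Schwarz gives
\[
\int_0^1\abs{J^n_t}(\omge)\dx{t}\le\Bigl(2\int_0^1\calA(\mu^n_t,J^n_t)\dx{t}\Bigr)^{1/2}\le\sqrt{2\Lambda}.
\]
In the same way, $\int_0^1\abs{\nu^n_t}(\omge)\dx{t}=\int_0^1\norm{\nabla\rho^n_t}_{L^1}\dx{t}\le(8\Lambda/\beta^2)^{1/2}$. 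By Banach--Alaoglu on the compact sets $[0,1]\times\omgg$ and $[0,1]\times\omge$, we can extract a subsequence with $\mu^n\weakcvg\mu$, $J^n\weakcvg J$ and $\nu^n\weakcvg\nu$.

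\textbf{Step 2: the limit solves the continuity equation with the right endpoints.} The action bound gives equi-absolute continuity in time:
\[
W_2(\mu^n_s,\mu^n_t)\le\sqrt{2\Lambda}\,\abs{t-s}^{1/2}.
\]
This follows from the metric-graph Benamou--Brenier formula \cite[Theorem 3.7, Corollary 1]{MatthiasErbar2022}, or directly by testing the continuity equation with $1$-Lipschitz $\varphi\in C^1(\omgg)$ to obtain a $W_1$ bound. An Arzelà--Ascoli argument in the compact space $(\Prob(\omgg),W_2)$ then yields a further subsequence with $\mu^n_t\weakcvg\mu_t$ uniformly in $t$. Hence $t\mapsto\mu_t$ is weakly continuous, and $\mu_0,\mu_1$ are preserved because every $\mu^n$ shares these endpoints. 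Passing to the limit in the integrated weak formulation against $\eta(t)\varphi(x)$ with $\eta\in C^1_c((0,1))$ and $\varphi\in C^1(\omgg)$ shows that $(\mu_t,J_t)\in\ceunreg$. Here the limit $J$ disintegrates as $J_t$ with $\int_0^1\abs{J_t}\dx t<\infty$ by the mass bound.

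\textbf{Step 3: identification of $\nu$ and membership in $\cereg$ (main obstacle).} By Remark~\ref{rem:Prop_tildeJ}\ref{lscextfct} we have $\Tilde{\calJ}_\beta(\mu,\nu,J)\le\Lambda$. The difficulty is to show that the limit has no singular parts, that $\mu_t=\rho_t\leb$ with $\rho_t\in W^{1,1}$, and that $\nu_t=\nabla\rho_t\leb$, since only then does Remark~\ref{rem:Prop_tildeJ}\ref{samefct} give $\calJ_\beta(\mu,J)\le\Lambda$.

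For the identification I pass to the limit in the integration-by-parts identity
\[
\int\eta(t)\int_{\omge}\nabla\varphi\dx{\mu^n_t}\dx{t}=-\int\eta(t)\int_{\omge}\varphi\dx{\nu^n_t}\dx t
\]
for $\varphi$ smooth on each edge with compact support in the open edges. This shows that $\nu_t$ is the distributional gradient of $\mu_t$ on each open edge. The bound $\int_0^1\abs{\nu_t}\dx t<\infty$ then makes $\mu_t$ a BV measure on each edge, and in particular absolutely continuous. Finiteness of $\calA_{\sfm^\perp}(\mu^s_t,\nu^s_t)$ forces $\nu^s_t\ll\mu^s_t$, while $\nu_t=D\mu_t$ has no singular part with respect to $\leb$ when $\mu^s_t=0$. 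So $\nu_t=\nabla\rho_t\leb$, which gives $\rho_t\in W^{1,1}$ on each edge.

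Continuity across vertices, needed for $\rho_t\in W^{1,1}(\omgg)$ in the sense used in $\cereg$, is the delicate part. I would obtain it from the uniform $W^{1,1}$ bound on the edges: traces converge, and since each $\rho^n_t$ is continuous at the vertices, $\rho_t$ is as well. Alternatively, one can test with $\varphi\in C^1(\omgg)$ across vertices to exclude jump terms. Finally, the action is finite, so $\abs{J_t}\ll\mu_t\ll\leb$. Therefore $(\mu_t,J_t)\in\cereg$ with $\calJ_\beta(\mu_t,J_t)\le\Lambda$, i.e.\ $(\mu_t,J_t)\in M_\Lambda$.
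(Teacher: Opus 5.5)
Your Steps 1--2 agree in substance with the paper. A direct Cauchy--Schwarz estimate replaces the cited lemma of Stephan, and a $W_2$/$W_1$ H\"older bound replaces the estimate from Bredies et al. Your identification of $\nu_t$ as the distributional derivative of $\mu_t$ on the open edges is more explicit than what the paper writes. One small point: writing the limits as $J_t\dx{t}$, $\nu_t\dx{t}$ needs the Cauchy--Schwarz bound on arbitrary time sets, $\abs{J^n}(A\times\omge)\le\sqrt{2\Lambda\abs{A}}$, not only on $[0,1]$.

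The genuine gap is at the vertices, and it appears twice in Step 3. First, test functions supported in the open edges cannot see a vertex. So you only get absolute continuity of $\mu_t$ on each \emph{open} edge, and an atom of $\mu_t$ at a vertex survives your argument. Lower semicontinuity of $\tilde{\calJ}_\beta$ does not remove it either, since $(\mu_t,\nu_t)=(\delta_\sfv,0)$ costs nothing. Your sentence that $\nu_t$ has no singular part ``when $\mu^s_t=0$'' therefore assumes what must be shown. The paper closes this with an estimate whose ingredients you already have:
\begin{itemize}
\item $\norm{\rho^n_t}_{L^\infty}\le C\bigl(1+\norm{\nabla\rho^n_t}_{L^1}\bigr)$ gives $\int_0^1\norm{\rho^n_t}_{L^\infty}\dx{t}\le C$ uniformly in $n$;
\item testing the weak convergence with continuous $\varphi_k\ge\one_{\sfV}$, $\norm{\varphi_k}_{L^1}\to0$, then yields $\int_0^1\mu_t(\sfV)\dx{t}=0$, i.e.\ no vertex atoms for a.e.\ $t$.
\end{itemize}

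Second, your argument for continuity across vertices is false as stated, because a $W^{1,1}$ (that is, BV) bound does not make traces converge. On a star graph, let $\rho^n$ equal $b>0$ on all edges but one. On that edge let it equal $a>0$, $a\neq b$, except for a linear ramp of width $1/n$ reaching $b$ at the centre. Then $\norm{\nabla\rho^n}_{L^1}=\abs{b-a}$ and every $\rho^n$ is continuous, yet the limit jumps at the centre. Besides, at fixed $t$ you have no bound uniform in $n$ at all, only the time-integrated one. Testing with $\varphi\in C^1(\omgg)$ does not repair this, since $\int_{\omge}\bigl(\nabla\varphi\,\rho^n_t+\varphi\,\nabla\rho^n_t\bigr)\dx{x}=\sum_{\sfv}\varphi(\sfv)\rho^n_t(\sfv)\sum_{\sfe\in\sfE(\sfv)}n^\sfe_\sfv$ involves uncontrolled point values.

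What prevents the jump is the Fisher information itself, not its $L^1$ consequence; the ramp example has $\calI\sim n$. The relevant identity is $\int_{\omge}\abs{\nabla\sqrt{\rho}}^2\dx{x}=\tfrac12\calI(\rho\leb)$. A working argument:
\begin{itemize}
\item By Fatou, $\liminf_n\calI(\mu^n_t)<\infty$ for a.e.\ $t$.
\item Along a $t$-dependent subsequence, $\sqrt{\rho^{n_k}_t}$ is then bounded in $W^{1,2}(\omgg)$, which embeds compactly into $C(\omgg)$.
\item Since $\mu^n_t\weakcvg\mu_t$ for every $t$, the uniform limit gives $\mu_t=\rho_t\leb$ with $\sqrt{\rho_t}\in W^{1,2}(\omgg)$ continuous at the vertices. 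Hence $\rho_t\in W^{1,1}(\omgg)$, and this also excludes vertex atoms directly.
\end{itemize}
Only then can you conclude. Lower semicontinuity gives $\nu\ll\mu\ll\dx{t}\otimes\leb$, which removes atoms of $\nu_t$ at the vertex copies in $\omge$. Combined with your open-edge identity, this yields $\nu_t=\nabla\rho_t\leb$ and $\calJ_\beta(\mu,J)=\tilde{\calJ}_\beta(\mu,\nu,J)\le\Lambda$. The paper's own proof also treats vertex continuity in a single sentence, but the mechanism you propose for it would not work.
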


\begin{proof}
   Let $(\mu_t^n, J_t^n)_{n\in\N}\subset M_\Lambda$ be given. 
   The proof is divided into two steps. In the first part, we show compactness separately for each of the sequences $(\mu_t^n)_{n\in\N}$, $(\nu_t^n)_{n\in\N}$, and $(J_t^n)_{n\in\N}$. Here, $\nu_t^n$ is the measure defined by the density $\nabla\rho_t^n$ against the Lebesgue measure on $\omge$ and by i) from Remark~\ref{rem:Prop_tildeJ} we have $\Tilde{\calJ_\beta}(\mu_t^n,\nu_t^n,J_t^n) = \calJ_\beta(\mu_t^n,J_t^n) \leq \Lambda$ as well.
   Next, we conclude that the limit is contained in the set $M_\Lambda$.
   
   For the first part, we apply \cite[Lemma 5.1]{Stephan2021} to the choices $\Omega=[0,1]\times\omge$, $F(r) = r^2$, $\mu = \mu_t^n$, and $W=\nabla\mu_t^n $ or $W=J_t^n$ to obtain
   \begin{align} \label{eq:grad_bound}
       \norm*{\nu^n} = \norm*{\nabla \rho_t^n}_{\spaceflux} &\leq \frac{8\Lambda}{\beta^2} + k_F \quad \text{and}\quad \norm*{J^n} = \norm*{j_t^n}_{\spaceflux} \leq 2\Lambda + k_F
   \end{align}
   for all $n\in\N$. As a consequence, we find weakly converging subsequences
   \begin{align*}
       \nu^n \weakcvg \nu \text{ in } \calM_{\geq0}([0,1]\times\omge)\quad \text{and} \quad J^n \weakcvg J \text{ in } \calM_{\geq0}([0,1]\times\omge).
   \end{align*}
   Moreover, by standard arguments, see e.g. \cite[Lemma A.2]{bredies2020optimal}, the sequence $(\mu_t^n)_{n\in\N}$ satisfies the Hölder estimate
    \begin{align*}
       \abs*{\int_{\omgg} \phi(x) \dx{\mu_t^n(x)} - \int_{\omgg} \phi(x) \dx{\mu_s^n(x)} \dx{x}} &\leq C \sqrt{\Lambda} \norm{\phi}_{C^1(\omge)}\abs{t-s}^\frac{1}{2}
   \end{align*}
   for a constant $C>0$. A generalized version of Arzelà-Ascoli \cite[Lemma A.4]{bredies2020optimal} then gives $\mu^n \weakcvg \mu$ in $\calM_{\geq0}([0,1]\times\omgg)$ and $\mu_t^n \weakcvg\mu_t$ in $\Prob(\omgg)$ for all $t\in(0,1)$, as well as weak continuity of $t\mapsto \mu_t$. 
   Regarding the disintegration in time of the limit measures $\nu^n$ we obtain from Hölder's and Jensen's inequalities that
   \begin{align*}
       \int_\omge \abs{\nabla\rho_t^n} \dx{x} &\leq \int_{\omge\cap \set{\rho_t^n\neq0}} \sqrt{\Psi\bra*{\rho_t^n(x), \nabla\rho_t^n(x)}}\sqrt{\rho_t^n(x)} \dx{x} + \int_{\omge\cap \set{\rho_t^n=0}} \sqrt{\Psi\bra*{\rho_t^n(x), \nabla\rho_t^n(x)}} \dx{x}\\&\leq C \calI(\mu_t^n)^\frac{1}{2}
   \end{align*}
   for $n\in\N$ , almost all $t\in(0,1)$, and a constant $C>0$. From this bound, the assumptions of the disintegration theorem \cite[Theorem 5.3.1]{Ambrosio_GF} follow and we obtain $\nu_t^n \weakcvg\nu_t$ for almost all $t\in(0,1)$ as well as $\nu = (\nu_t)_{t\in[0,1]}$. By analogous computations, it holds that $J_t^n\weakcvg J_t$ for almost all $t\in(0,1)$ with $J = (J_t)_{t\in[0,1]}$. 
   As the continuity equation is linear, we conclude $(\mu_t,J_t)\in\ceunreg$.
   Next, we consider the absolute continuity of $\mu_t$ with respect to $\leb$.   
   Since $\leb$ is outer regular, we can find a sequence of continuous functions $\varphi_k:\omgg\to [0,1]$, $k\in\N$, such that $\varphi_k\geq \one_A$ and $\varphi_k\to \one_A$ in $L^1(\omgg)$ as $k\to\infty$. For any $k\in\N$, we obtain 
   \begin{align*}
       0 &\leq \int_0^1 \mu_t(A) \dx{t} = \int_0^1 \int_{\omgg} \one_A(x) \dx{\mu_t(x)}\dx{t}\leq \int_0^1 \int_{\omgg} \varphi_k(x) \dx{\mu_t(x)}\dx{t} = \lim\limits_{n\to\infty} \int_0^1 \int_{\omgg} \varphi_k(x) \dx{\mu^n_t(x)}\dx{t}.
   \end{align*}
   Using the fact that $W^{1,1}(\omgg)\hookrightarrow C(\omgg)$ \cite[Lemma 3.27]{mugnolo_semigroup_2014} with embedding constant $C_{W^{1,1}}>0$ together with~\eqref{eq:grad_bound}, we conclude
   \begin{align*}
       0 &\leq \int_0^1 \int_{\omgg} \varphi_k(x) \dx{\mu^n_t(x)}\dx{t} \leq C_{W^{1,1}}\norm{\varphi_k}_{L^1(\omgg)} \int_0^1\norm*{\rho_t^n}_{W^{1,1}(\omgg)}\dx{t} \leq C_{ac} \norm{\varphi_k}_{L^1(\omgg)} 
   \end{align*}
   for a constant $C_{ac}>0$. As $k\in\N$ was arbitrary and $\varphi_k\to \one_A$ in $L^1(\omgg)$, this gives
   \begin{align*}
       0 &= \int_0^1 \mu_t(A)\dx{t}
   \end{align*}
   allowing us to apply the Radon-Nikodym theorem, and consequently $\mu_t\ll\leb$ for a.e. $t\in[0,1]$.
   
    It remains to show that the limit is an element of the set $M_\Lambda$. We have $\Tilde{\calJ_\beta}(\mu_t, J_t, \nu_t) \leq \Lambda$ by lower semicontinuity of $\Tilde{\calJ}_\beta$, and because of $\mu_t^s \equiv 0$, this implies $\nu_t^s \equiv 0$ as well as $J_t^s\equiv 0$. As a consequence, $\nu_t\ll\leb$ and by weak convergence together with the linearity of the weak gradient we conclude $\rho_t \in W^{1,1}(\omgg)$ for a.e. $t\in(0,1)$, and therefore $(\mu_t,J_t)\in\cereg$.  
   On this set, $\Tilde{\calJ}_\beta$ is lower semicontinuous and coincides with $\calJ_\beta$, so that $(\mu_t,J_t)\in M_\Lambda$, thus proving compactness.
\end{proof}

Having established lower semi-continuity and compactness, we can apply the direct method leading to the following existence result. 

\begin{theorem}
    Let $\mu_0, \mu_1 \in \calP(\omgg)$ with $\calH(\mu_i\entsep\leb)<+\infty$, $i=0,1$ be given. If the infimum is finite, there exists $(\mu_t^*, J_t^*)\in\cereg$ minimizing~\eqref{eq:SBPreg} and~\eqref{eq:SBPunreg}. 
\end{theorem}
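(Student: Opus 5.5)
The plan is the direct method of the calculus of variations, applied to \eqref{eq:SBPreg}, followed by a separate argument that the resulting minimizer also solves \eqref{eq:SBPunreg}.

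First, by Proposition~\ref{prop:nonempty} the set $\cereg$ is nonempty. By assumption the infimum $m:=\inf_{\cereg}\calJ_\beta$ is finite, so we may pick a minimizing sequence $(\mu^n_t,J^n_t)\subset\cereg$ with $\calJ_\beta(\mu^n_t,J^n_t)\to m$. Discarding finitely many terms, the whole sequence lies in $M_\Lambda$ with $\Lambda:=m+1$.

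By Theorem~\ref{CmpctSub}, a subsequence converges weakly to some $(\mu^*_t,J^*_t)\in M_\Lambda\subset\cereg$. Along this subsequence the measures $\nu^n_t=\nabla\rho^n_t\,\leb$ converge weakly to $\nu_t=\nabla\rho^*_t\,\leb$, as established in the proof of that theorem.

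Next comes lower semicontinuity. By Remark~\ref{rem:Prop_tildeJ}~\ref{samefct} and~\ref{lscextfct}, we have
\[
\calJ_\beta(\mu^*_t,J^*_t)=\Tilde{\calJ_\beta}(\mu^*_t,\nu_t,J^*_t)\le\liminf_n\Tilde{\calJ_\beta}(\mu^n_t,\nu^n_t,J^n_t)=\liminf_n\calJ_\beta(\mu^n_t,J^n_t)=m .
\]
Hence $(\mu^*_t,J^*_t)$ minimizes \eqref{eq:SBPreg}.

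It remains to show that this pair is also a minimizer of \eqref{eq:SBPunreg}, i.e.\ that the two infima agree. Let $(\mu_t,J_t)\in\ceunreg\setminus\cereg$.
\begin{itemize}
\item If $\mu_t$ fails to be absolutely continuous, or $\rho_t\notin W^{1,1}(\omgg)$, on a set of times of positive measure, then $\calI(\mu_t)=+\infty$ there, so $\calJ_\beta=+\infty$.
\item If $\abs{J_t}\not\ll\leb$ on a set of times of positive measure, then $\calA(\mu_t,J_t)=+\infty$ there by \eqref{eq:ActionFct}, so again $\calJ_\beta=+\infty$.
\end{itemize}
Therefore every competitor in $\ceunreg$ with finite energy already lies in $\cereg$. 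This gives $\inf_{\ceunreg}\calJ_\beta=\inf_{\cereg}\calJ_\beta$, and the minimizer above attains both infima.

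The only delicate point is the identification of the limit of $\nu^n_t$ with $\nabla\rho^*_t$, which is what allows $\Tilde{\calJ_\beta}$ to be evaluated as $\calJ_\beta$ at the limit. This identification is part of Theorem~\ref{CmpctSub}, obtained via weak closedness of the distributional gradient: $\int\rho^n_t\,\partial_x\phi=-\int\phi\,\dx{\nu^n_t}$ passes to the limit. So here it suffices to invoke that theorem.
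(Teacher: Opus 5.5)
Your proof is correct and follows the paper's route. The paper uses the direct method, with compactness of the sublevel sets $M_\Lambda$ from Theorem~\ref{CmpctSub} and lower semicontinuity of the extended functional $\Tilde{\calJ_\beta}$ from Remark~\ref{rem:Prop_tildeJ}, and it notes only in passing that every finite-energy element of $\ceunreg$ already lies in $\cereg$, so the two infima coincide. Your explicit lower-semicontinuity step, which improves the bound from $\Lambda=m+1$ to $m$, together with your case check that competitors outside $\cereg$ have infinite energy, spells out what the paper leaves implicit when it simply invokes the direct method.
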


\section{Numerics}\label{numerics}

In this section, we numerically study~\eqref{eq:SBPunreg} on metric graphs and present several examples. 
Our algorithm is based on a primal-dual approach from \cite{carrillo2021primal} and \cite{Pietschmann_2022}, modified by introducing a mixed formulation. In turn, this leads to slightly different proximal operators. Compared to \cite{Pietschmann_2022}, we employ central difference quotients for time and space derivatives in order to retain the time-symmetric structure and as in \cite{Pietschmann_2022}, we enforce the constraint set up to a given tolerance level. This approach results in an unconstrained minimization problem.

Throughout we assume that there exists a tuple $(\mu_t, J_t)\in \cereg$ such that $\calJ_\beta(\mu_t,J_t)< +\infty$, allowing us to consider~\eqref{eq:SBPreg} instead of~\eqref{eq:SBPunreg}. This is true in particular under Assumption~\ref{as:Linfty}, which also allows to consider absolute continuous minimizers with respect to $\leb$. For this reason, we consider the densities $(\rho_t, j_t)$ instead of the measures $(\mu_t,J_t)$.

\subsection{Discretization}

We discretize each edge $\sfe\in\sfE$ associated to the closed interval $[0,\ell_\sfe]$ for $\ell_\sfe>0$ using an equidistant grid consisting of $N_x^\sfe$ subintervals of length $\Delta_x^\sfe:= \ell_\sfe/N_x^\sfe$. Similarly, the time interval $[0,1]$ is discretized using $N_t$ intervals of length $\Delta_t:=1/N_t$. The corresponding gridpoints are denoted by 
\begin{align*}
x^\sfe_n = (n-1)\Delta^\sfe_x \quad \text{and} \quad t_k = (k-1)\Delta_t
\end{align*}
for $\sfe\in\sfE$, $n\in\{1,\ldots,N^\sfe_x+1\}$ and $k\in\{1,\ldots,N_t+1\}$. To any continuous function $v_t:[0,1]\times[0,\ell_\sfe]\to\R$ on this grid we associate the gridfunction $v_h:=(v^\sfe_{n,k})_{\sfe,n,k}$ defined by the evaluations 
\begin{align*}
v^\sfe_{n,k} := v^\sfe(x_n^\sfe, t_k).
\end{align*}
Note that this definition leads to different vertex values on each edge. 
In order to adapt the primal-dual approach, we introduce the additional variable
\begin{align*}
g^\sfe_t := -\frac\beta2\nabla\rho^\sfe_t
\end{align*}
and the resulting mixed formulation
\begin{align*}
\min\limits_{(\rho_t, j_t, g_t)\in\ceregmix}  \int\limits_0^1 \calA(\rho_t, j_t) + \calA(\rho_t, g_t) \dx{t},
\end{align*}
where $\ceregmix$ is the set of weak solutions to the system 
\begin{equation}\label{eq:MixedForm}\tag{MF}
\begin{cases}
     \partial_t\rho_t + \nabla\cdot j_t = 0  &(0,1)\times\omge\\
     g_t = -\frac\beta2\nabla\rho_t &(0,1)\times\omge\\
     \sum_{\sfe\in\sfE(\sfv)} j_t(\sfv) n^\sfe_\sfv = 0 &\sfv\in\sfV\\
     \rho^\sfe_t(\sfv) = \rho^{\sff}_t(\sfv) &\sfv\in\sfV, \, \sfe,\sff\in\sfE(\sfv)\\
     \rho_{|t=0} = \rho_0 &\omgg\\
     \rho_{|t=1} = \rho_1 &\omgg
\end{cases}
\end{equation}
and $\calA(\cdot,\cdot)$ is as in~\eqref{eq:ActionFct}. To shorten notation, we introduce the following shorthand of the integrand for this mixed formulation 
\begin{align*}
    \Psi^{\rm mix}(\rho_t,j_t,g_t) &= \Psi(\rho_t,j_t) + \Psi(\rho_t,g_t).
\end{align*}

Next, we discretize the constraint set. Regarding the continuity equation, we employ central difference quotients in the interior points and one sided difference quotients in the vertices. They are defined by the discrete differential operators
\begin{align*}
D^t\rho^\sfe_h(x^\sfe_n, t_k) := \begin{cases}
        \frac{\rho^\sfe_{n,2} - \rho^\sfe_{n,1}}{\delta_t}        &: k=1 \\
        \frac{\rho^\sfe_{n,k+1} - \rho^\sfe_{n,k-1}}{2\delta_t}   &: 2\leq k \leq N_t \\
        \frac{\rho^\sfe_{n,N_t+1} - \rho^\sfe_{n,N_t}}{\delta_t}        &: k=N_t+1 
\end{cases}
\; \text{ and } \;
D^x j^\sfe_h (x^\sfe_n, t_k) := \begin{cases}
        \frac{j^\sfe_{2,k} - j^\sfe_{1,k}}{\delta^\sfe_x}        &: n=1 \\
        \frac{j^\sfe_{n+1,k} - j^\sfe_{n-1,k}}{2\delta^\sfe_x}   &: 2\leq n \leq N^\sfe_x \\
        \frac{j^\sfe_{N^\sfe_x+1,k} - j^\sfe_{N^\sfe_x,k}}{\delta^\sfe_x}        &: n=N^\sfe_x+1 
    \end{cases}
\end{align*}
acting on gridfunctions $\rho_h$ and $j_h$. Integration is approximated using the composite trapezoidal rule with weights 
\begin{align*}
\omega^\sfe_n := \begin{cases}
    \frac{\Delta^\sfe_x}{2} &: n\in\{1, N^\sfe_x+1\} \\
    \Delta^\sfe_x &: \text{ else}
\end{cases}
\quad \text{and} \quad
\omega_k := \begin{cases}
    \frac{\Delta_t}{2} &: k\in\{1, N_t+1\} \\
    \Delta_t &: \text{ else}
\end{cases},
\end{align*}
which allows us to define the discrete inner product 
\begin{align*}
\skp{\bm{u}_h,\bm{v}_h} := \sum\limits_{\sfe\in\sfE} \sum\limits_{n=1}^{N^\sfe_x+1}\sum\limits_{k=1}^{N_t+1} \omega^\sfe_n \omega_k \left(\rho^{u,e}_{n,k}\rho^{v,e}_{n,k} + j^{u,e}_{n,k}j^{v,e}_{n,k} + g^{u,e}_{n,k}g^{v,e}_{n,k} \right),
\end{align*}
where $u_h = \left(\rho^u_h, j^u_h, g^u_h \right)$ and $v_h = \left(\rho^v_h, j^v_h, g^v_h \right)$. We denote the induced norm by $\norm{u_h} = \sqrt{\skp{u_h, u_h}}$.

\subsection{Algorithm}

In the following, we introduce the basic ideas and steps of the algorithm, which is based on the works \cite{carrillo2021primal} and \cite{Pietschmann_2022}. 

Due to the numerical discretization, we add the conservation of mass as an additional constraint. Each of the seven constraints is enforced up to a given tolerance $\delta_i>0$ for $i\in\set{1,\ldots,7}$, using the norm introduced in the previous section. For example, the continuity equation is enforced by
\begin{align*}
\sum\limits_{\sfe\in\sfE} \sum\limits_{n=1}^{N^\sfe_x+1} \sum\limits_{k=1}^{N_t+1} \omega^\sfe_x\omega_t \left(D^t\rho^\sfe_{n,k} + D^xj^\sfe_{n,k} + g^\sfe_{n,k}\right)^2  \leq \delta_1^2.
\end{align*}
In this relaxed form, we are dealing with quadratic terms that can be rewritten as
\begin{align*}
S u_h\in C_\delta := \left\{ x \ | \ \Vert x_i - b_i \Vert_2 \leq \delta_i \text{ for } i=1,\ldots,7 \right\}
\end{align*}
for $S$ denoting the matrix obtained from the discretization of the left hand side of each constraint and $b$ the vector corresponding to the discretized right hand sides. With $\iota_\delta$ the convex indicator function of $C_\delta$, we can rewrite the optimization problem as the following unconstrained one 
\begin{align*}
\inf\limits_{u_h = \left(\rho_h, g_h, j_h\right)} \sum\limits_{\sfe\in\sfE} \sum\limits_{n=1}^{N^\sfe_x+1}\sum\limits_{k=1}^{N_t+1} \omega^\sfe_n \omega_k \Psi^{\rm mix}\left(\rho^\sfe_{n,k}, j^\sfe_{n,k}, g^\sfe_{n,k} \right) + \iota_\delta\left( S u_h \right).
\end{align*}
This problem can be solved by the primal-dual algorithm~\ref{Alg1}, where we make use of the Legendre-Fenchel-transform of the convex indicator function
\begin{align*}
\iota_\delta^*(\Tilde{u}_h) := \max\limits_{u_h} ~ \skp{\Tilde{u}_h, u_h}_2 - i_\delta(u_h)
\end{align*}
and  the proximal operator of a proper, convex and lower semicontinuous map $\Psi$, defined as
\begin{align*}
\prox_\Psi(u) := \argmin\limits_{v} \Psi(v) + \frac{1}{2}\norm{v - u}^2.
\end{align*}

\begin{algorithm}[t]
    \caption{primal-dual algorithm for the Schrödinger problem}\label{Alg1}
    \SetKwInOut{Input}{Input}
    \SetKwInOut{Output}{Output}
    \Input{$u^{(0)}$, $\phi^{(0)}$, \text{tol}, $l_{\max}$, $\lambda$, $\sigma$, $S$, $b$, $\delta$}
    \Output{$u_{\text{opt}}$, $\phi_{\text{opt}}$}

    Initialize $\Bar{u}^{(0)} = u^{(0)}$ \\
    \For{$l\in\{0,\ldots, l_{\max}\}$}
    {
        \begin{flushleft}
        $\phi^{(l+1)} = \prox_{\sigma \iota_\delta^*}\left( \phi^{(l)} + \sigma S\Bar{u}^{(l)}\right)$ \newline
        $u^{(l+1)} = \prox_{\lambda \Psi^{\rm mix}}\left( u^{(l)} - \lambda S^* \phi^{(l+1)}\right)$ \newline
        $\overline{u}^{(l+1)} = 2 u^{(l+1)} - u^{(l)}$ 
        
        \If{$\Vert Su - b\Vert_2<\text{tol}$}
        {
            $u_{\text{opt}}= u^{(l+1)}$         
        }
        \end{flushleft}
    }
\end{algorithm}

It remains to evaluate the proximal mappings. As in \cite{carrillo2021primal}, the proximal operator of the indicator function coincides with the projection onto the set $C_\delta$. 
For the second operator, we need to solve 
\begin{align*}
\prox_{\lambda \Psi^{\rm mix}}(u) = \argmin\limits_{v = (\rho^v, g^v, j^v)} \lambda\Psi(\rho^v, j^v) + \lambda\Psi(\rho^v, g^v) + \frac{1}{2}\norm{v-u}^2
\end{align*}
for $u=(\rho^u, g^u, j^u)$ . 
Note that $\lambda\Psi(\rho^v, j^v) + \lambda\Psi(\rho^v, g^v) + \frac{1}{2}\norm{v-u}^2$ is strictly convex and non-negative for all $u$ with $\rho^u\geq 0$. Therefore, minimizers are unique and can be characterized as roots of the gradient. We distinguish two cases.  
First, assume that $\rho^u>0$. Then, the optimality conditions read 
$$
\begin{pmatrix}
    -\lambda \frac{\abs{j^v}^2 + \abs{g^v}^2}{2\rho^v} + \rho^v - \rho^u \\
    \lambda \frac{j^v}{\rho^v} + j^v - j^u\\
    \lambda \frac{g^v}{\rho^v} + g^v - g^u
\end{pmatrix} = 0.
$$
Similar to \cite[Appendix A]{Pietschmann_2022}, direct calculations verify that this system is solved by $\rho^v = \rho^*$, $j^v = \frac{\rho^v j^u}{\rho^v + \lambda }$ and $g^v = \frac{\rho^v g^u}{\rho^v + \lambda}$ if $\rho^*$ is the largest real root of the polynomial
$$
P(X) = (X-\rho^u)(X-\lambda)^2 - \frac{\lambda}{2}\left(\abs{j^u}^2 + \abs{g^u}^2\right).
$$
If $\rho^u=0$, the choice $v = (\rho^v, j^v, g^v) = (0,0,0)$ attains the minimum allowing us to efficiently evaluate the proximal mappings.

We conclude this section with two different examples for the algorithm introduced above.

\begin{example}[Counterexample to geodesic convexity]
    In Section~\ref{sec:discussion} we discussed the lack of geodesic convexity of the entropy on metric graphs and its connection to the $\Gamma$-convergence of the dynamic problems. An explicit counterexample to this convexity has been introduced in \cite[Section 4]{MatthiasErbar2022}, which we treat numerically here. We also compare the choices $\beta\in\{10^{-1},1\}$ in the dynamic Schrödinger problem to the explicitly known geodesic for $\beta=0$.
    To this end, consider the graph from Figure~\ref{fig:counterex_graph} with $\ell_\sfe=1$ and $n^\sfe_{\sfv_0}=1$ for all $\sfe\in\sfE$. Further, let
    \begin{align}\label{eq:indicator}
        \rho_0(x) = \begin{cases}
            \frac{1}{\ve}\one_{[0,\ve]}(x) &: x\in \sfe_1 \text{ or } x \in \sfe_2\\ 0&: \text{else}
        \end{cases} \quad \text{and}\quad \rho_1(x) = \begin{cases}
            \frac{1}{2\ve}\one_{[0,\ve]}(x) &: x\in \sfe_3\\ 0&: \text{else}
        \end{cases} 
    \end{align}
    for $0<\ve<1$. We regularize the indicator functions by gaussian smoothing. Applying Algorithm~\ref{Alg1} to this problem yields the results shown in Figure~\ref{fig:counterex_dens}. As predicted by Theorem~\ref{thm:gamma_cvg_dynamic}, the minimizers approach the Wasserstein geodesic ($\beta=0$) as the regularization parameter tends to zero. In this regard, the numerical experiment is in good agreement with our analytical findings.
    Moreover, smoothing caused by the additional diffusive term can be observed as well as continuity in the vertices for $\beta>0$. For $\beta=0$ no such smoothing occurs and solutions are generally not continuous.

\begin{figure}[t]
    \centering
    \scalebox{.7}{
    \begin{tikzpicture}
        \draw[-] (-1.5,-1) -- (0,0) node[midway, above] {\small $e_1$};
	    \draw[-] (1.5,-1) -- (0,0) node[midway, above] {\small $e_2$};
	      \draw[-] (0,1.5) -- (0,0) node[midway, left] {\small $e_3$};
	
	      \filldraw (-1.5,-1) circle[radius=0.05] node[left] {\small $v_1$};
	      \filldraw (0,0) circle[radius=0.05] node[below] {\small $v_0$};
	      \filldraw (1.5,-1) circle[radius=0.05] node[right] {\small $v_2$};
	      \filldraw (0,1.5) circle[radius=0.05] node[above] {\small $v_3$};
    \end{tikzpicture}}
    \caption{Star-shaped graph with three edges}
    \label{fig:counterex_graph}
\end{figure}

\begin{figure}
    \centering
    \begin{subfigure}{.3\textwidth}
      \centering
      \includegraphics[width=\linewidth]{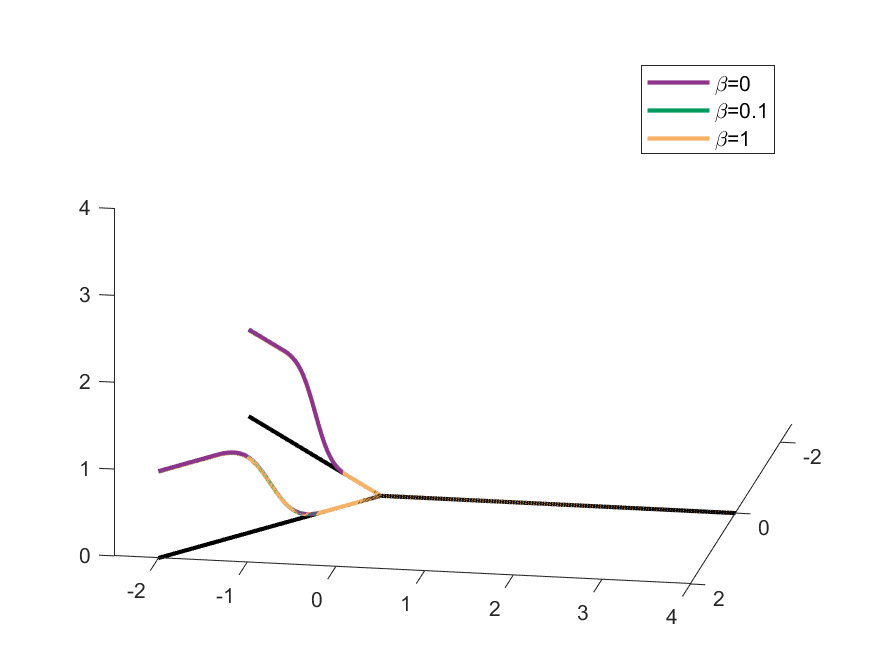}
      \caption{$\rho_t$ at $t=0$}
    \end{subfigure}
    \begin{subfigure}{.3\textwidth}
      \centering
      \includegraphics[width=\linewidth]{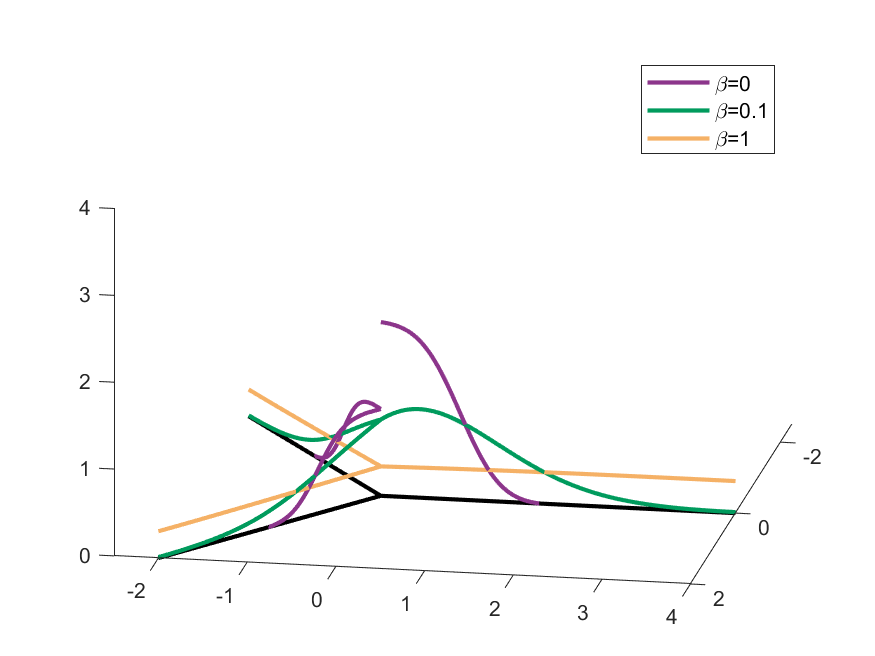}
      \caption{$\rho_t$ at $t=0.5$}
    \end{subfigure}
    \begin{subfigure}{.3\textwidth}
      \centering
      \includegraphics[width=\linewidth]{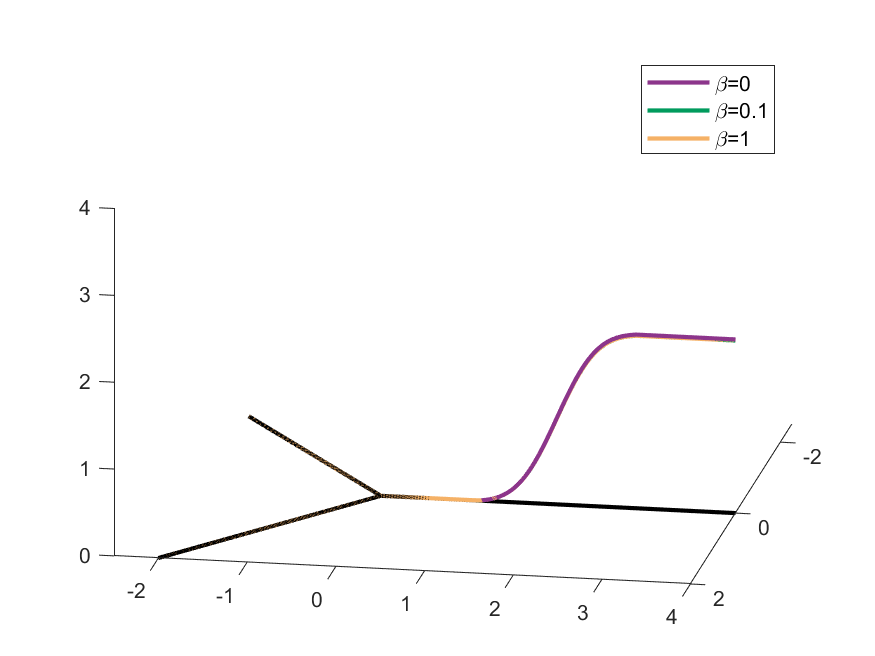}
      \caption{$\rho_t$ at $t=1$}
    \end{subfigure}
    \caption{Comparison between numerical results for $N_t=50$, $N_x=150$, $\delta_i=\delta=10^{-5}$, $\lambda=10^{-4}$, tolerance $2\cdot 10^{-3}$ and the choices $\beta\in\{10^{-1},1\}$ with the explicit geodesic for $\beta=0$. Initial and final data are defined according to~\eqref{eq:indicator} with $\ve=\frac12$.}
    \label{fig:counterex_dens}
\end{figure}
    
\end{example}

\begin{example}[Gaussian data]\label{ex:gaussian}
    In this example, we consider the same metric graph as displayed in Figure~\ref{fig:counterex_graph}, while imposing gaussian initial and final data supported on one edge each, i.e.
    \begin{align}\label{eq:gaussian}
        \rho_0(x) = \begin{cases}
             C\exp\bra*{-\bra*{\frac{x-\Bar{x}}{\sfs}}^2} &: x\in \sfe_1\\ 0&: \text{else}
        \end{cases} \quad\text{and}\quad \rho_1(x) = \begin{cases}
            \exp\bra*{-\bra*{\frac{x-\Bar{x}}{\sfs}}^2} &: x\in \sfe_3\\ 0&: \text{else}
        \end{cases}
    \end{align}
    for $\Bar{x}=\frac{1}{2}$, $\sfs=\frac{1}{10}$, and a constant $C>0$ such that the resulting measures satisfy $\mu_i(\omgg)=1$, $i=0,1$. We consider different parameters $\beta\in\{10^{-1},1\}$ and compare the curves with the explicit geodesic for $\beta=0$. The numerical results are presented in Figure~\ref{fig:gaussian_data} and again show convergence as $\beta\to0$. Additionally, in the case $\beta>0$ we observe smoothing.
    For dynamic transport, i.e. $\beta=0$, it is also shown that the solution is supported only on the edges $\sfe_1$ and $\sfe_3$, thus behaving like a one-dimensional Wasserstein geodesic. In particular, the restriction of this curve to the edge $\sfe_2$ is constant and equal to zero. 
    In contrast, the results for $\beta>0$ show support on the whole graph as well as continuity in the vertices similar to the previous example. 
    
    \begin{figure}
    \centering
    \begin{subfigure}{.3\textwidth}
      \centering
      \includegraphics[width=\linewidth]{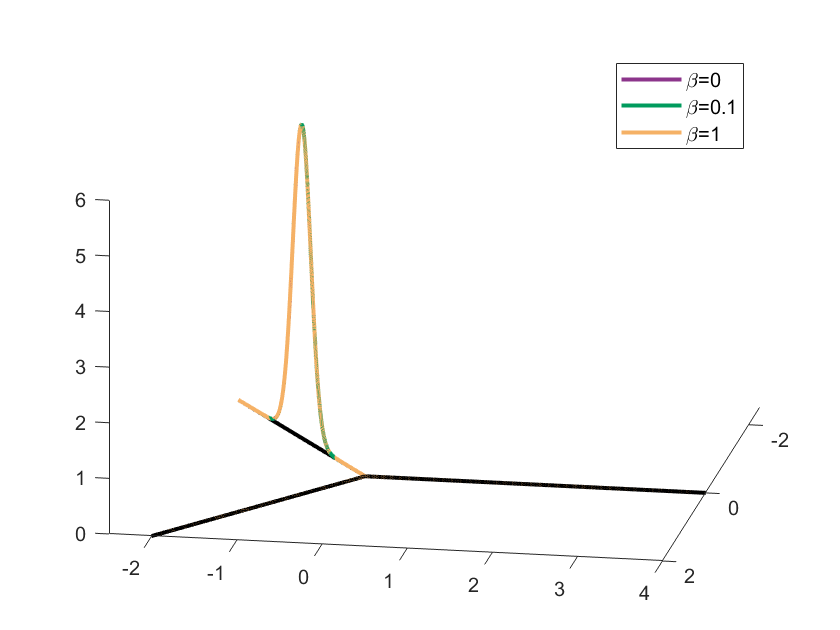}
      \caption{$\rho_t$ at $t=0$}
    \end{subfigure}
    \begin{subfigure}{.3\textwidth}
      \centering
      \includegraphics[width=\linewidth]{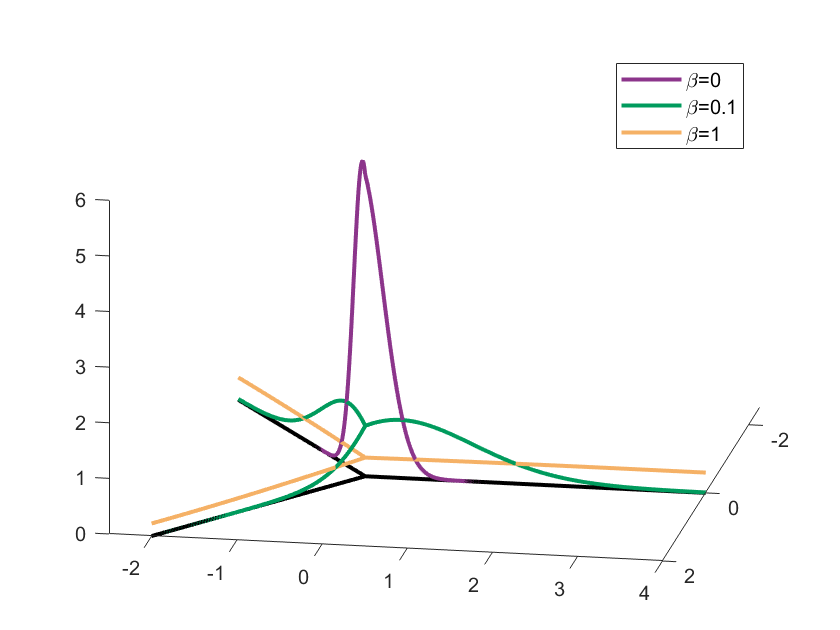}
      \caption{$\rho_t$ at $t=0.5$}
    \end{subfigure}
    \begin{subfigure}{.3\textwidth}
      \centering
      \includegraphics[width=\linewidth]{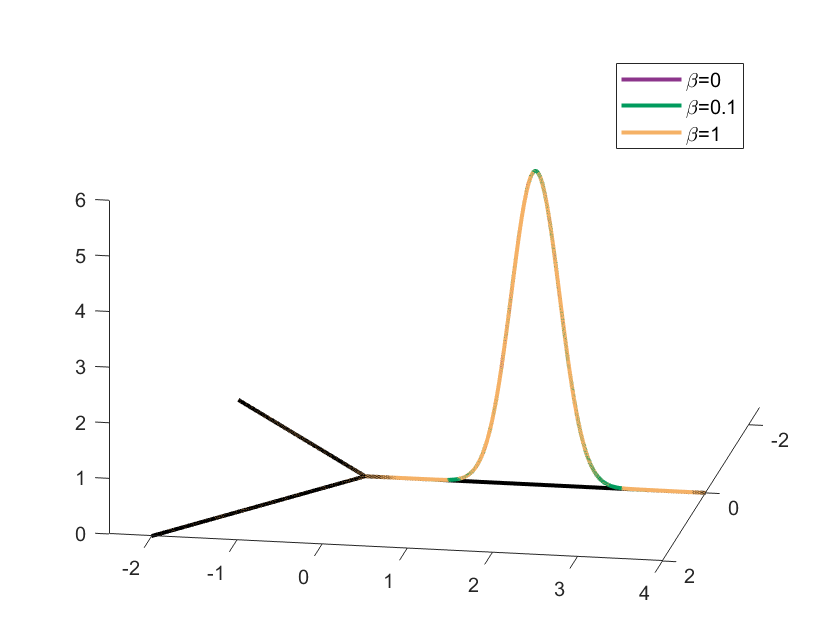}
      \caption{$\rho_t$ at $t=1$}
    \end{subfigure}
    \caption{Comparison between numerical results in the setting of Example~\ref{ex:gaussian} for $N_t=50$, $N_x=150$, $\delta_i=\delta=10^{-5}$, $\lambda=10^{-4}$, tolerance $2\cdot 10^{-3}$ and the choices $\beta\in\{10^{-1},1\}$ with the explicit geodesic for $\beta=0$. }
    \label{fig:gaussian_data}
\end{figure}
\end{example}

\section*{Acknowledgements}
The authors thank Delio Mugnolo (FernUniversität in Hagen – University of Hagen) for fruitful discussions on properties of the heat kernel. This article is based upon work from COST Action 24122 mSPACE, supported by COST (European Cooperation in Science and Technology), www.cost.eu.

\nocite*
\bibliographystyle{alpha}
\bibliography{sample}

\end{document}